\def\Bbb{\mathbb}
\def\cal{\mathcal}
\def\al{\alpha}
\def\C{{\cal C}}
\def\dim{\mathrm{dim}}
\def\de{\delta}
\def\text#1{{\em #1}}
\def\la{\lambda}
\def\ep{\varepsilon}
\def\si{\sigma}
\def\om{\omega}
\def\be{\begin{equation}}
\def\ee{\end{equation}}
\def\bear{\begin{eqnarray}}
\def\eear{\end{eqnarray}}
\def\best{\begin{eqnarray*}}
\def\eest{\end{eqnarray*}}
\renewcommand{\theequation}{\arabic{section}.\arabic{equation}}
\newtheorem{theorem}{Theorem}[section]
\newtheorem{prop}[theorem]{Proposition}
\newtheorem{lemma}[theorem]{Lemma}
\newtheorem{cor}[theorem]{Corollary}
\newtheorem{defn}[theorem]{Definition}
\newtheorem{assumption}[theorem]{Assumption}
\newtheorem{remark}[theorem]{Remark}
\newenvironment{rem}{\begin{remark}\rm}{\end{remark}}
\newtheorem{example}[theorem]{Example}
\newenvironment{ex}{\begin{example}\rm}{\end{example}}
\def\non{\noindent}
 \def\pf{\non {\it Proof. }}
\def\qed{\nopagebreak \hskip .1in { $\Box$ }\penalty10000 %
\hskip\parfillskip \par  }
\def\ra{\rightarrow}
\def\r#1{\right#1}
\def\l#1{\left#1}
\def\ma#1{\mathop {#1} \limits}
\def\De{\Delta}
\def\phi{\varphi}
\def\ti{\times}
\def\del{\overline \partial}
\def\Z{{ \Bbb Z}}
\def\R{{ \Bbb R}}
\def\P{{ \Bbb P}}
\def\Q{{ \Bbb Q}}
\def\cx{{ \Bbb C}}
\def\J{\cal J}
\def\JV{\cal{JV}}
\def\M{{\cal M}}
\def\oM{\overline{\cal M}}
\def\U{{\cal U}}
\def\N{{\cal N}}
\def\ev{\mathrm{ev}}
\def\st{\mathrm{st}}
\def\ct{\mathrm{ct}}
\def\pr{\mathrm{pr}}
\def\pr{\mathrm{pr}}
\def\wt#1{\widetilde{#1}}
\def\wh#1{\widehat{#1}}
\def\ov#1{\overline{#1}}
\def\w{\omega}
\def\Aut{\mathrm{Aut}}
\def\Hom{\mathrm{Hom}}
\def\longra{\longrightarrow}
\def\Mu{\boldsymbol{\mu}}
\def\Sym{\mathrm{Sym}}
\def\Ev{\mathrm{Ev}}
 \newcommand\cHH{\check{\mathrm{H}}}  
\newcommand\Cech{\v{C}ech\ }
\def\longra{\longrightarrow}
\def\cHH{\check{H}}
\def\al{\alpha}
\def\tC{\tilde{C}}
\def\Sym{\mathrm{Sym}}
 \def\Cech{\v{C}ech\ }
\begin{document}

\title{\bf  A natural Gromov-Witten virtual fundamental class     \vskip.2in}
\author{Eleny-Nicoleta Ionel}
\address{Stanford University,
 Palo Alto, CA, USA.}
\email{ionel@math.stanford.edu}

\author{Thomas H. Parker}
\address{Michigan State University, East Lansing MI, USA. }
\email{parker@math.msu.edu}

\maketitle



 \begin{abstract}
We describe a program for proving that the Gromov-Witten moduli spaces of compact symplectic manifolds carry a unique virtual fundamental class that satisfies certain naturality conditions.     The virtual fundamental class is constructed using only Ruan-Tian perturbations by   introducing stabilizing divisors, using  \v{C}ech homology,  and systematically applying  naturality conditions.  In high dimensions or low genus, no gluing theorems are needed.
\end{abstract}


\setcounter{equation}{0}
\section{Introduction}
\label{section1}
\bigskip

The central object of Gromov-Witten theory is the virtual fundamental class (VFC) of the moduli space of stable maps.  In the algebraic geometry context, the virtual fundamental class was constructed soon after the subject began.  It is difficult to extend the construction to the symplectic category, in large part because the methods of geometric analysis are ill-suited to apply to maps from families of degenerating curves.  
The issue was originally addressed in the late 1990s (\cite{fo}, \cite{LT}, \cite{liu-t},  \cite{R}, \cite{siebert}).  As the subject  has developed, there has been an increasing need for an approach that is conceptual and also easy to use.   In recent years  major efforts have been made to  define a   virtual fundamental class  using Kuranishi structures  \cite{fooo}, \cite{joyce},  \cite{mw},  and  polyfolds \cite{hofer}.  Recently, J.~Pardon has developed a very nice approach using implicit atlases \cite{pardon}.

The program described in this article is an alternative approach,  building on ideas of Cieliebak-Mohnke \cite{cm} and guided by the functorial aspects of the VFC.   Parts of this program are currently conjectural. The strategy  is to first observe that existing results give a  virtual fundamental class  for a set of  especially nice almost complex structures  (which we call super-fine and regular),  and  then successively enlarge the space of parameters on which the VFC is defined. This strategy works well in high dimensions ($\dim X\ge 12$) or low genus ($g\ge 1$) where, as observed by Tehrani and Zinger in \cite{tz}, dimension counts obviate any need for gluing theorems.  (We do not address the lower dimensional case here.) To fully exploit functoriality, we  consider a variety of  related Gromov-Witten theories.

\medskip

 Here is the general setting.   In  symplectic Gromov-Witten theory, each moduli space is part of a  family parameterized by  the space $\J(X)$ of  almost complex structures on $X$ that are tamed by a fixed symplectic structure $\w$, or more generally, the space of all tamed pairs $(J,\w)$.  One must also specify the type of domain curve and type of target.  The choices involved can be phrased as follows.

A  ``GW moduli problem'' for a symplectic manifold  $X$, or more generally a symplectic pair $(X,V)$,  consists of
\begin{enumerate}
\item[(a)]  A choice of a category of families of complex curves, 
\item[(b)]  A choice of a (possibly empty) normal crossing divisor $V\subset X$, 
\item[(c)]  A choice of a  set $\J$ of that parameterizes a set of pseudo-holomorphic maps.  
\end{enumerate}

From this data one constructs a GW family  $\oM(X)$ of moduli spaces over $\J$ whose fiber $\oM^J(X)$ over $J\in\J$ is the (compact) set of all isomorphism classes of $J$-holomorphic maps from curves of the specified type.  We will assume that (a) is a category of curves with   $n$ marked points and a Deligne-Mumford space $\ov{\M}_n$; then there is an stabilization-evaluation map for the family:
\bear
\label{0.1}
\xymatrix{
\oM(X) \ar[r]^{\quad  se  \qquad } \ar[d]_\pi & \oM_n\times X^n\\
\J & 
}
\eear

\noindent {\bf Meta-Theorem}:  Every fiber of $\pi$ carries a virtual fundamental class 
$[\oM^J(X)]^{vir}\in \cHH_*(\oM^J(X))$ in rational \Cech homology.\\


The existence of a virtual fundamental class immediately gives the Gromov-Witten 
invariants by pushing forward by the $se$ map.  
\medskip


The purpose of  our program  is to make this meta-theorem precise,  to outline a proof, and to describe some functorial and uniqueness properties of the virtual fundamental class.

\medskip

The following examples emphasize the fact that there is not one, but many GW theories, depending on the choices (a)--(c).

\begin{ex} It is standard to consider the compact moduli spaces $\oM^J_{A, g, n}(X)$ of stable $J$-holomorphic maps from a genus $g$ $n$-marked curve to $X$ that represent $A\in H_2(X)$. These give a diagram (\ref{0.1}) where
\best
se: \ov\M_{A, g, n}(X)\ra \ov\M_{g, n} \ti X^n.
\eest 
The $GW$ invariants are 
\best
GW_{A, g, n}(X)=se_*[\ov\M_{A, g, n}^J(X)]^{vir}\in H_*(\oM_{g, n} \ti X^n).
\eest 
Each $\oM_{A, g, n}(X)$ is an open and closed subset of $\oM(X)=\ma\sqcup_{A, g, n}\oM_{A, g, n}(X)$.
 \end{ex}

\begin{ex}
\label{ex.add.G} 
One can also consider maps with decorated domains.  One example, described in Section~2, is obtained by fixing a finite group $G$ and considering  curves with twisted $G$-covers. 
 Again, there is a space of stable maps and  a stabilization-evaluation map  
\best
se: \ov\M^G_{A, g, n}(X)\ra \ov\M^G_{g, n} \ti X^n. 
\eest
\end{ex}
\begin{ex}  One can also decorate the target by choosing a normal crossing divisor $V\subset X$ and making two modifications:  a) restrict the parameter space  $\J$  to the subspace  $\J(X,V)$ of $V$-compatible almost complex structures, 
and b) mark all the points in the inverse image of $V$, and decorate them with a vector $s$ that records  contact multiplicities to the various branches of $V$.  Using the relative stable map compactification, one obtains moduli spaces $\oM(X,V)$ of relatively stable maps and a refined  $se$ map 
\best
se: \ov\M_{A, g, n, s}(X, V)\ra \ov\M_{g, n+\ell(s)} \ti X^n\ti \P_s (NV)
\eest
as described in Section~\ref{section7}. There is also a $G$-twisted version
\best
se: \ov\M_{A, g, n, s}^G(X, V)\ra \ov\M_{g, n+\ell(s)}^G \ti X^n\ti \P_s (NV).
\eest
\end{ex}


To include all these examples, we consider families of moduli spaces    
 \bear
\label{1.ms}
\xymatrix{
\ov\M_{A, g, n, s}^{G,P}(X, V)\ar[d]_\pi \\
\ \qquad P \subset \J
}
\eear
parameterized by some subset $P$ of almost complex structures or Ruan-Tian perturbations (see Section~3.1), and where $\oM^{G}(X, V)$ denotes the moduli space of $G$-twisted maps into $(X, V)$, for a finite group $G$ and a normal crossing divisor $V$ in $X$. 
 Each such family of moduli spaces has a natural stabilization-evaluation map $se$ and  an expected dimension $d$ depending on $(A, g, n, s)$.  These families are related by   functorial maps of several types:
\begin{enumerate}[(i)]
\item An inclusion $\iota:  Q\hookrightarrow  R$ of compact subsets of $\J$  yields   a diagram
\bear
\label{iota}
\xymatrix{
\ov\M^{G,Q}_{A,g,n,s}(X,V)\ar[d]\ar[r]^{\qquad \iota \qquad}&\ \ov\M_{A,g,n,s }^{G,R}(X,V) \ar[d] \\
Q\ar[r]^\iota &R
}
\eear
\item  Forgetting the twisted $G$-cover induces a map 
\bear
\label{1.phiG}
\phi_G: \ov\M^G_{A,g,n,s}(X,V)\to \ov\M_{A,g,n,s}(X,V).
\eear 
over the subspace $\J(X, V)$ of parameters that do not depend on the $G$-structure.
\item  Forgetting a smooth divisor $V$ with the corresponding contact points induces a map 
\bear
\label{1.phiVmapabsolute}
\phi_{V}: \ov\M_{A,g,n, s}(X,V)  \to \ov\M_{A,g,n}(X)
\eear
over the space of parameters $\J(X,V)$ that do not depend on the $\ell(s)$ contact points. Similarly,  forgetting a component $V$ of a normal crossing divisor $V\cup W$  induces a map  
\bear
\label{1.phiVmap}
\phi_{W}: \ov\M_{A,g,n, s}(X,V\cup W)  \to \ov\M_{A,g,n,s_W}(X,W)
\eear
over $\J(X, V\cup W)$, where $s_W$ is the subset of $s$ that records the intersection multiplicities with $W$.

\end{enumerate}



\bigskip

A parameter $J\in P$  is called  {\em regular} if the linearization of the $J$-holomorphic map equation on each stratum is a surjective operator,  for every $J$-holomorphic map $f\in \ov\M^{J}_{A,g,n,s}(X,V)$.   Two classes of regular $J$ are especially easy to work with:  a regular $J$ is {\em domain-stable}  if the domain of every $f$ in the moduli space is a stable curve,  and is   {\em domain-fine}  if each domain is a stable curve with no non-trivial automorphisms.

  Ideally, one could hope to find a parameter space $P$ in  which the fiber $\oM^J$ over each regular $J$ is a manifold of the expected dimension $d$.  For such $J$, one expects that the restriction of (\ref{1.phiG})  to the fiber to be a map
 $$
\phi_G: \ov\M^{G,J}_{A,g,n,s}(X,V)\to \ov\M^J_{A,g,n,s}(X,V)
$$ 
of  degree equal to the degree $\deg_G$ of the finite branch cover $\phi_G: \oM^G_{g, n+\ell(s)} \ra \oM_{g, n+\ell(s)}$. 

  Focusing on the case where $s$ is ${\bf 1}=(1, \dots, 1)$, one might guess  that for a smooth divisor $D$ constructed by Donaldson's Theorem,  \eqref{1.phiVmapabsolute}  is a  map
$$
\phi_{D}: \ov\M^J_{A,g,n, {\bf 1}}(X,D)  \to \ov\M^J_{A,g,n,{\bf 1}}(X)
$$
of degree  $\ell(s)!=(A\cdot D)!$.   In Section~11, we show that this property holds for a non-empty class of ``sufficiently positive  divisors''  $D$ (cf. Definition~\ref{def8.1}) provided that either $\dim_\R\, X\ge 10$ or $g\le 1$  (outside this range, there may be  correction terms coming from maps with components in $D$.)

Similarly, one might conjecture  that \eqref{1.phiVmap} restricts to be a map 
$$
\phi_{D}: \ov\M^J_{A,g,n, s, {\bf 1}}(X,V\cup D)  \to \ov\M^J_{A,g,n,s}(X,V),
$$
also of degree $(A\cdot D)!$.  In Section~11 we show that this property  holds if the branch $D$ belongs to  a more restricted, again non-empty,  class of sufficiently positive  divisors, and either   $g\le 1$ or every stratum of $V\cup D$ is at least 8 dimensional.

\bigskip

 Each subset $Q$ of the parameter space $P$ in \eqref{1.ms} determines a moduli space $\oM^Q(X)=\pi^{-1}(Q)$ over $Q$.  
As shown in Section~\ref{section4},   the  family (\ref{1.ms}) has a metric space topology making $\pi$ a proper map to  $P$  with its $C^0$ topology.   One can then pass to  \v{C}ech homology with rational coefficients, which defined for all metric spaces and is equal to Steenrod homology with rational coefficients for all compact metric spaces. For facts about these homology theories we refer the reader to \cite{ma1}, \cite{ma2} and  \cite{milnor}.   In  \Cech homology,  a compactification $\ov{M}=M$ of an oriented  topological manifold $M$ has a fundamental class $[\ov{M}]$ provided that the set $S=\ov{M}\setminus  M$ of ``boundary strata'' has codimension~2, regardless of how these strata fit together (see \cite[page 357]{DK}, and \cite{IPFC} for details).


\begin{defn}
\label{1.defnVFC}
{\rm A {\em virtual fundamental class}  for a  moduli space $\oM^P=\oM_{A,g,n,s}^P(X,V)$  over a path-connected manifold $P\subset \J$ associates  to each compact subset  $Q\subseteq P$  an element 
 $$
 [\oM^Q]^{vir}\in \cHH_*(\oM^Q; \Q)
 $$
such that a regularity axiom and three naturality axioms hold:
\begin{itemize} \setlength\itemsep{4pt}
\item[{\bf A1.}] If $J\in P$ is regular then $[\oM^{J}]^{vir}$ is the fundamental class $[\oM^{J}]$. 
\item[{\bf A2.}] A proper inclusion $\iota:Q\to R$ induces $\iota_*[\oM^Q]^{vir}\ =\ [\oM^R]^{vir}$. 
\item[{\bf A3.}] Under \eqref{1.phiG},    \    $(\phi_G)_*[\oM^{G,Q}]^{vir}\ =\deg_G \cdot [\oM^Q]^{vir}$. 
\item[{\bf A4.}]  Suppose that $A\not= 0$, that  $D$ is a  smooth  sufficiently positive divisor,  
that $Q$ is a subset of $\J(X, D)\subset \J(X)$,  and that  $\dim_\R X\ge 10$ or $g\le 1$.   Then under \eqref{1.phiVmapabsolute} with $s = {\bf 1} =(1,\dots, 1)$,  
$$
 \frac{1}{\ell(s)!}\, (\phi_D)_*[\oM_{A,g,n,{\bf 1}}^Q(X,D)]^{vir}
\ =\  [\oM_{A,g,n}^Q(X)]^{vir}.
$$
More generally, suppose that $A\not= 0$,  $V\cup D$ is a normal crossing extension of $V$ with $D$ sufficiently positive,  $Q\subset \J(X, V\cup D)$,  and that either $g\le 1$ or all strata of $V\cup D$ are at least 8 dimensional. Then under \eqref{1.phiVmap} and  $s_D= {\bf 1} =(1,\dots, 1)$, 
 $$
 \frac{1}{\ell(s_D)!}\, (\phi_D)_*[\oM_{A,g,n,s,{\bf 1}}^Q(X,V\cup D)]^{vir}
\ =\  [\oM_{A,g,n,s}^Q(X,V)]^{vir}.
$$
\end{itemize}
}
\end{defn}
\bigskip

With this terminology, our main result can be stated simply:

\begin{theorem}
\label{VFCfinecase} Assume $X$ is a closed symplectic manifold and $V$ a normal crossing divisor in $X$. Whenever $A\ne 0$ and  
\bear\label{As*}
\mbox{either $g\le 1$, or  $X$ and every stratum of $V$ are at least 12 dimensional}
\eear
then 
there is a unique virtual fundamental class associated to  $\ov\M_{A,g,n, s}(X, V)\to \J(X, V)$.  In particular, 
\begin{enumerate}
\item[(a)]   For each $J\in \J(X)$,  there is a class
\best
\label{0.vfcMain(a)}
[\ov\M_{A,g,n, s}^J(X)]^{vir}\ \in\ \cHH_*(\ov\M_{A,g,n, s}^J(X); \Q).  
\eest 
\item[(b)] For each $J\in \J(X, V)$,  there is a class
\best
\label{0.vfcMain(b)}
[\ov\M_{A,g,n, s}^J(X, V)]^{vir}\ \in\ \cHH_*(\ov\M_{A,g,n, s}^J(X,V); \Q).  
\eest
\end{enumerate}
\end{theorem}

\bigskip

For the cases not covered by Theorem~\ref{VFCfinecase} --- those where the dimension of the target is low and the genus of the domain is high  ---  Axiom~A4 must be modified by the addition of correction terms.  This complication will be addressed elsewhere. The case $A=0$ is much easier and can be treated with standard techniques.

\bigskip

 The image of such a virtual fundamental class under an $se$ map is  called a {\em Gromov-Witten class}.

\begin{cor}
\label{1.GWCor}
The Gromov-Witten class 
$$
GW_{A,g,n,s}(X,V) = se_*[\ov\M_{A,g,n,s}^J(X,V)]^{vir} \in H_*(\oM_{g,n}\ti X^n \ti  \P_s (NV))
$$
depends only on the  deformation class of $(J,\w)\in \J(X,V)$.
\end{cor}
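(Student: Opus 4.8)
The plan is to deduce Corollary~\ref{1.GWCor} directly from Theorem~\ref{VFCfinecase}, using only Axiom~A2 together with the functoriality of the stabilization--evaluation map; the decorated Axioms A3--A4 play no role here. There are two steps. First, I would show that the class $GW_{A,g,n,s}$ is locally constant on the parameter space $\J(X,V)$. Second, I would show that two deformation--equivalent symplectic structures are represented by points of $\J(X,V)$ lying in a common path component, so that local constancy forces the invariants to agree.

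For the first step, fix a path--connected $P\subset\J(X,V)$ and two parameters $J_0,J_1\in P$. Applying Axiom~A2 to the inclusions $\{J_i\}\hookrightarrow P$ gives $(\iota_i)_*[\oM^{J_i}]^{vir}=[\oM^{P}]^{vir}$ in $\cHH_*(\oM^{P};\Q)$ for $i=0,1$. The $se$ map of~\eqref{1.ms} is defined on the entire family $\oM^{P}$ and takes values in the fixed space $\oM_{g,n+\ell(s)}\ti X^n\ti\P_s(NV)$ (which reduces to $\oM_{g,n}\ti X^n$ when $V=\emptyset$), independent of the parameter; hence $se|_{\oM^{J_i}}=se\circ\iota_i$, and therefore
\[
se_*[\oM^{J_0}]^{vir}=se_*[\oM^{P}]^{vir}=se_*[\oM^{J_1}]^{vir}.
\]
Thus $GW_{A,g,n,s}(X,V)$ is the same whether computed from $J_0$ or from $J_1$, i.e.\ it is constant on each path component of $\J(X,V)$.

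For the second step, let $\w_t$, $t\in[0,1]$, be a path of symplectic forms (compatible with $V$) joining $\w_0$ to $\w_1$. Since taming is an open condition and each $\w_t$ admits $V$--compatible tame almost complex structures, compactness of $[0,1]$ yields a partition $0=t_0<\dots<t_N=1$ and, for each $k$, an almost complex structure $J^{(k)}$ that is $V$--compatible and tamed by $\w_t$ for every $t\in[t_{k-1},t_k]$. Consecutive $J^{(k)}$ and $J^{(k+1)}$ are both tamed by $\w_{t_k}$, hence both lie in the path--connected space $\J(X,\w_{t_k},V)$, which sits inside $\J(X,V)$ because the fiber of~\eqref{1.ms} over a point $J$ depends on $J$ alone: a class--$A$ map has $\w$--energy equal to the topological quantity $\int_A\w$, so the set of $J$--holomorphic maps and their Gromov topology do not change when the taming form is replaced by another one. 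Chaining the resulting paths shows $J^{(1)}$ (tamed by $\w_0$) and $J^{(N)}$ (tamed by $\w_1$) lie in a single path component $P\subset\J(X,V)$, and the first step then finishes the proof. The one point that requires any care is this last observation—that the parameter space, and hence the virtual class of Theorem~\ref{VFCfinecase}, sees only $J$ and not the auxiliary taming form—since it is exactly what allows Axiom~A2, stated for subfamilies of a single parameter space, to propagate the equality of $GW$ across a deformation of $\w$; beyond it there is no substantive obstacle once Theorem~\ref{VFCfinecase} is in hand.
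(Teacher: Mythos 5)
Your Step 1 is exactly the paper's proof: the paper takes the two deformation-equivalent structures to lie on a path $P=\{J_t\}$ in $\J(X)$, applies Axiom A2 to the inclusions $\iota_0,\iota_1$ of the endpoints, and pushes forward by $se$ (whose target is independent of the parameter) to get $se_*\iota_{0*}[\oM^{J_0}]^{vir}=se_*[\oM^{P}]^{vir}=se_*\iota_{1*}[\oM^{J_1}]^{vir}$. The difference is your Step 2: the paper never needs it, because it \emph{defines} deformation equivalence of two tame structures $(J,\w)$ and $(J',\w')$ as lying on a path in the parameter space $\J(X)$ of \emph{pairs}, so the hypothesis already hands you the path $P$. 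Your Step 2 is the standard unpacking of the alternative reading (a path of symplectic forms $\w_t$), and it is essentially correct; note, though, that since the paper's parameter space consists of pairs, you could chain entirely inside $\J(X)$ via $(J^{(k)},\w_t)$ for $t\in[t_{k-1},t_k]$ followed by $(J_s,\w_{t_k})$ interpolating between $J^{(k)}$ and $J^{(k+1)}$ within the (contractible, hence path-connected) space of $\w_{t_k}$-tame structures, which makes your ``the VFC sees only $J$, not the taming form'' observation unnecessary; in the relative case you should also say a word about why the space of $\w$-tame $V$-adapted $J$ is path-connected, which you currently assert without argument. So the proposal is correct, its core coincides with the paper's argument, and the extra step buys you only a more literal treatment of ``deformation of the symplectic structure'' than the paper's definition requires.
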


\begin{proof}
Two tame structures $J_0=(J, \w)$ and $J_1=(J',\w')$ are deformation equivalent  if they lie in  a path $P=\{J_t\}$ in $\J(X)$.  Let $\iota_0: \{J_0\}\to P$ and $\iota_1:\{J_1\}\to P$ be the inclusions.    Then by  Axiom A2
$$
se_*\iota_{0*}[\ov\M^{J_0}(X,V)]^{vir} = se_*\iota_{1*}[\ov\M^{J_1}(X,V)]^{vir}= se_*[\ov\M^{P}(X,V)]^{vir}.
$$
\end{proof}

\medskip

 Theorem~\ref{VFCfinecase}    is proved  by repeatedly applying three moves:  (i) introducing $G$-structures and  stabilizing divisors,  and passing to the associated moduli spaces,   (ii) enlarging the space of almost complex structures and  (iii)  applying the continuity property of  \v{C}ech homology. Stabilizing divisors are analogs of  the   very ample line bundles used to construct the virtual fundamental class in algebraic geometry.   We follow   Cieliebak and Mohnke's approach  \cite{cm} of  using divisors from Donaldson's Theorem, and also include ideas from Siebert \cite{siebert} and McDuff-Wehrheim \cite{mw}.

 \medskip
 
  \noindent{\bf Acknowledgements.}   This work was  supported in part by the NSF under grants DMS-0905738 (for the first author) and 
DMS-1011793 (for the second).   We thank M.~Tehrani and A.~Zinger for pointing an error in the first version of this paper which affected the main result in low dimensions. We also benefited from very useful conversations and correspondence with J.~Morgan, J.~Pardon, C.~Wendl and C.~Woodward.

\vspace{1cm}

\setcounter{equation}{0}
\section{Families  of curves} 
\label{section2}
\medskip

Gromov-Witten moduli spaces $\oM(X)$ are families of pseudo-holomorphic maps.   To understand them, one must first understand the case where $X$ is a single point.  In this case,  $\oM(X)$  is a family of curves with an appropriate topology.    Thus it is not enough  to consider curves in isolation; one must consistently work with families of curves and maps between such families.  This viewpoint, reviewed in this section, is standard  in algebraic geometry, but its power has not been fully used in the geometric analysis literature on Gromov-Witten theory. 

  For our purposes, a {\em family of  genus $g$ $n$-marked curves over $S$} is a  proper surjective map of complex analytic spaces with sections $\sigma_1, \dots, \sigma_n$ 
\bear
\label{1.1}
\xymatrix{
\C\ar[d]_{\pi} \\
S \ar@/_1pc/[u]_{\si_i} 
 }
\eear
 whose fibers $C_s=\pi^{-1}(s)$  are closed, connected curves of  arithmetic genus $g$ and that has the following structure.  There is an nodal set $\N\subset \C$ such that (i)  $\pi$ is  a locally trivial fibration in  the complement of each neighborhood of  $\N$ and (ii)  for each point  $c\in \N$ there are coordinates $(x,y,v)\in \cx^2\ti V$ on a neighborhood of $c=(0,0,0)$ and  $(z,v)\in \cx\ti V$ on a neighborhood of $\pi(c)$  in which $\pi$ is given by  $\pi(x,y, v)=(xy, v)$.   Finally, the images of the sections $\sigma_i$ are disjoint and disjoint from $\N$.

Maps between families are given by the obvious commutative square; this gives a notion of  isomorphisms and automorphisms of families.  The pullback of a  family (\ref{1.1})  along  a map $\phi:T\to S$ is  a family $\phi^*\C$ over $T$.  One can then  envision a  {\em universal curve}:   a family $\ov\U \to \ov\M$ over some space $\ov\M$ such that  any  family (\ref{1.1}) is isomorphic to the pullback along a unique map  $\phi:S \to \ov\M$.
\bear
\label{1.2}
\xymatrix{
\C \ar[d]\ar[r]& \ov\U\ar[d] \\
S \ar[r]^\phi  & \ov\M
 }
\eear

In this generality,  universal curves  do not exist. The key impediment is the existence of families that contain a  fiber $C_s$  with non-trivial automorphisms.   If the map $\phi$ to $\ov\M$ is to be unique, then the pullback must be  $C_s/\Aut(C_s)$ instead of $C_s$.   Worse,  if  $\Aut(C_s)$ contains a subgroup isomorphic to $\cx^*$ then $\ov\M$ cannot exist as a Hausdorff space.   Thus the usual approach is to:
\begin{enumerate}
\item[(i)]  Restrict to {\em stable families}:  a  curve $C$ is  {\em stable} if  $\Aut(C)$ is finite and  a family is stable if its fibers are all stable.
\item[(ii)]  Require only the  weaker universal property that  there is a unique  map $\phi$ as in (\ref{1.2}) so that the pullback family  is a finite quotient of $\C\to S$.
\end{enumerate}
 Property (ii) makes $\ov\M$ a ``coarse moduli space'' and necessitates working with orbifolds.    (Alternative approaches using Artin stacks are not well-suited for use with geometric analysis.)

 \begin{ex}{\bf Deligne-Mumford spaces.}  Consider genus $g$ curves $C$ with $n$ distinct marked points $x_1, \dots, x_n$, none of which is a node.  The $x_i$, together with the nodes, are called the special points of $C$.  Smooth curves of this sort  are stable if $2g-2+n>0$;  in general $C$ is stable if each irreducible component with genus $0$ has at least 3 special points, and each with genus 1 has at least one special point.    For $2g-2+n>0$ families of stable curves modulo automorphisms are classified by maps into the Deligne-Mumford space $\ov\M_{g,n}$ (see \cite{lo2} for a nice survey of this).  Deligne, Mumford and Knudsen proved (see \cite{lo2}):
\begin{itemize}
\item $\oM_{g,n}$ is a projective, complex analytic orbifold;
\item  the locus $\partial\ov\M_{g,n} = \ov\M_{g,n}\setminus \M_{g,n}$ parametrizing singular curves is a normal crossing divisor in the orbifold sense.
   \end{itemize}
  \end{ex} 
There is also a   universal curve 
$p:\ov\U_{g,n}\ra\ov\M_{g, n}$ which is isomorphic to  $\ov\U_{g,n}= \ov\M_{g, n+1}$ where $p$ is the map that forgets the extra marked point $x_0$ adn collapses unstable components.   Each stable 
genus $g$ curve $C$ with $n$ marked points has a unique stable model $\st(C)=[C]\in \ov\M_{g, n}$ and a unique  map $\iota: C\ra \ov\U_{g, n}$ that fits in the commutative diagram  
\bear
\label{iota.gr}
\xymatrix{
C\ar[r]^{\iota\quad }\ar[d]\ar[dr]_{\st}&\;\ov\U_{g, n} \ar[d]^p \\
pt. \ar[r] & \ov\M_{g, n}
 }
\eear
($\iota$ is defined by $\iota(z)=\st (C, z) \in  \ov\M_{g,n+1}$ for non-special points  $z$ and extended by unique continuation).  However,  the fiber over  a stable curve $[C]\in \ov\M_{g, n}$ is isomorphic to $C/\Aut\, C$ and the map $\iota$ is not injective if $\Aut\, C\not= 1$; this means that 
$\ov\M_{g,n}$ is only a ``coarse moduli space''.

\begin{rem}\label{R.ext.DM.unstable}
Diagram (\ref{iota.gr})  also applies to prestable curves in the stable range $2g-2+n>0$ and we further extend it into the unstable range by {\em formally} defining $\ov\M_{g, n}$ to be the topological space  $\ov\M_{g, 3-2g}$ obtained by adding the minimum number of marked points needed to stabilize.   Thus  we define $\oM_{0,0}$,   $\oM_{0,1}$ and $\oM_{0,2}$ all to be  $\oM_{0,3}$, which is a single point, and define $\oM_{1,0}$ to be  $\oM_{1,1}$.

 In the unstable range, these spaces must be used cautiously:  for them,  the fiber of the universal curve (\ref{iota.gr}) is a point and  the map $\iota$  is only defined modulo $\Aut\, C$, which is a Lie group of positive dimension. 
\end{rem}

One can also consider families of  curves with additional structure.  By defining  a decorated curve to be a curve $C$  together with a map  $\rho: T\ra C$ we can include  extra marked points,  a principal $G$-bundle,  a divisor or log structure, etc.   Morphisms and automorphisms of decorated curves are defined by the obvious commutative square.  Again, a decorated curve is called stable if its group of automorphisms is finite.

In the examples we will use,   decorations  have the form  $\rho:\tC \to C$ where $\tC$ is another curve.  For each, there is a moduli space  $\ov\M^{dec}_{g,n}$ that classifies isomorphism classes of stable  pairs $(C, \rho)$ where $C$ is a genus $g$ nodal curve with $n$ marked points.    These decorated moduli spaces come with a universal curve $p:\ov\U^{dec}_{g, n}\ra \ov\M^{dec}_{g, n}$  and forgetful maps with the following properties:
\begin{enumerate}[(a)]
\item $ \ov\M^{dec}_{g, n}$ and $  \ov\U^{dec}_{g, n}$ are complex projective orbifolds. 
\item For each stable decorated curve $(C,\rho)$  there is a classifying map $\iota$ and a stabilization map $\st$ such that  $st=p\circ \iota$ whose image is a fiber isomorphic to $(C, \rho)/\Aut(C, \rho)$:
\bear
\label{iota.gr.2}
\xymatrix{
(C, \rho)\ar[r]^{\iota\quad }\ar[dr]_{\st}&\;\ov\U^{dec}_{g, n} \ar[d]^p  \\
& \ov\M^{dec}_{g, n}
 }
\eear
\item the maps (\ref{iota.gr.2}) are natural with respect to  ``forget decorations'' maps $\wt t$ and $t$  and ``forget marked point'' maps $p$:
\bear\label{U.M.graph.dec}
\xymatrix{
\ov\U_{g, n}^{dec} \ar[d]_{p}  \ar[r]^{\wt t}&    \ov\U_{g, n}     \ar[d]^{p}
&&  \ov\M_{g, n+1}^{dec} \ar[d]_{p}  \ar[r]^{\wt t}&     \ov\M_{g, n+1}   \ar[d]^{p}
\\ 
\ov\M_{g, n}^{dec}  \ar[r]^{t}& \ov\M_{g, n}   && \ov\M_{g, n}^{dec}  \ar[r]^{t}& \ov\M_{g, n}
}
\eear
\end{enumerate}

\begin{ex} 
Given a curve $C$ with $n$ marked points $\{x_i\}$, the operation of ``adding $\ell$ additional  marked points''  can be viewed as the choice of a degree 1  holomorphic map $\rho:\tC\to C$ where $\tC$ has $n+\ell$ marked points $\{\tilde{x}_i\}$ with $\rho(\tilde{x}_i)=x_i$ for $i=1, \dots, n$.  Then $\ov\M_{g, n}^{dec} $ is $\ov \M_{g, n+\ell}$. 
\end{ex}

\begin{ex}   Similarly,  $\rho$ could be a configuration of  $\ell$ unordered, non-special points on $C$ (i.e. an effective divisor on $C$); then $\ov\M_{g, n}^{dec}$ is the $\ell$-fold configuration space 
$\mathcal{C}^\ell_{g,n}\ra \ov\M_{g, n}$. We could also consider the relative Hilbert scheme 
$\mathcal{H}ilb^\ell\ra \ov\M_{g, n}$. These two spaces are homeomorphic, but have a different smooth (orbifold) structure; the second is a smooth resolution of the first. (e.g. in the case $C=\P^1$, $\mathcal{H}ilb^\ell(\P^1)=\P^d \ra \Sym^d\P^1$).  
\end{ex}

In these examples,  $(C, \rho)$ is stable whenever $\tC$ is, even if $C$ is not stable.  Thus  decorations can be used to stabilize curves, as was done  in Remark~\ref{R.ext.DM.unstable}.

Given that automorphisms cause problems, and  that decorated curves have fewer automorphisms, one should ask:  is there a choice of decorations that completely eliminates automorphisms? More specifically, is there a category, whose objects are families of decorated curves, in which no object has non-trivial automorphisms?  The resulting moduli space would then be a  {\em fine}  moduli space:    every family would be the pullback of a  classifying map, unique up to isomorphism,  that is an isomorphism on each fiber.   

 The existence of fine moduli spaces of curves was unknown until  the  2003 work of   Abramovich, Corti and Vistoli \cite{acv}.    They constructed  moduli spaces $\ov\M^{G}_{g, n}$ of stable  twisted $G$-covers for finite groups $G$.   These are $G$-covers $\rho:\wt C\ra C$  (possibly ramified, but only over special points of $C$) where $\wt C$ has marked points with stacky structure recording the ramification order of the cover, and that are also ``balanced'' (satisfy a matching condition at the nodes, cf. \cite[\S 2.2 and  \S4.3]{acv}).  They also showed that there exists certain finite groups $G$ such that  $\ov\M^{G}_{g, n}$  is a fine moduli space.  Their results are our final example. 
 
\begin{ex}\label{Ex.G} In genus zero $\ov\M_{0, n}$ is already a fine moduli space whenever  $n\ge 3$. 
For any finite group $G$ the moduli spaces $\ov\M^G_{g, n}$ (denoted ${\mathcal B}^{tei}_{g, n}(G)$ in \cite[Defn 7.6.2]{acv}) of stable twisted 
$G$ covers  have the properties (a), (b) and (c) above plus the following extra properties (cf. \cite{acv}): 

\begin{enumerate}
\item[(d)]   There is a natural $G$-action on  $\ov\M^G_{g ,n}$, and the forgetful map 
 $\phi_G:\ov\M^G_{g, n} \ra \ov\M_{g, n} $ is a Galois cover. 
\item[(e)] The universal curve $\ov\U_{g, n}^{G}$ is an open and closed  subset of $\ov\M_{g, n+1}^{G}$. 
\item[(f)] For each $g$ there exits a finite group $G$ such that  $\ov\M^G_{g ,n}$ and $\ov\U^G_{g ,n}$ are  smooth   complex   projective manifolds,  $\ov\M^G_{g ,n} $ is a fine moduli space and $\phi_G:\ov\M^G_{g ,n}\ra \oM_{g, n}$ is a finite surjective morphism (cf. \cite[Thm 7.6.4]{acv}).
\end{enumerate} 
Note that (f) implies the remarkable fact  that  {\em the  moduli space $\ov\M_{g, n}$ is a global quotient  orbifold} --- the quotient of a compact K\"{a}hler manifold by a finite group (this was first proved by Looijenga \cite{l}).    For later use, also note that the forgetful map $\phi_G$ induces a map in rational homology 
\best
\phi_{G*}: \mathrm{H}_*( \ov\M^G_{g,n}; \Q) \longra \mathrm{H}_*( \ov\M_{g, n}; \Q)
\eest
that relates the rational fundamental classes by the formula
\bear\label{deg.t}
\phi_{G*})[\ov\M^G_{g,n}]=\deg \phi_G\cdot  [\ov\M_{g,n}]. 
\eear
\end{ex}
\vspace{1cm}

\setcounter{equation}{0}
\section{Moduli spaces of holomorphic maps} 
\label{section3}
\medskip

Let $X$ be a closed symplectic manifold and let 
$
\J(X)
$
denote the space of   tame structures $(J,\om)$ on $X$,  consisting of an almost complex structure $J$ and a symplectic form $\w$, with the $C^0$ topology.   For each pair $(J,\w)$ the moduli space 
$$
\ov\M^J(X)
$$
 consists of equivalence classes of stable maps $f:C\ra X$ from a nodal marked domain $C$ into $X$  that are solutions to the $J$-holomorphic map equation
\best
\del _J f=0.
\eest
Such a  map is {\em stable} if the automorphism group $\Aut (f, C)$ is finite, and two such maps are equivalent if they are related by pre-composition by an  isomorphism of the domain. In particular, if  $C$ is a stable curve then any map $f:C\ra X$ is stable. In fact, the stability condition on $f:C\ra X$ is topological: there are no unstable domain components that represent 0 in homology. The moduli space has components  $\ov\M_{A, g, n}(X)$   indexed by the homology class $A=f_*[C]\in H_2(X, \Z)$,  the arithmetic genus $g$  and  number of marked points $n$ of 
$C$, and  is stratified by the topological type of the domain as explained in  the Appendix.

Each map has an  energy 
\best
E(f) =\frac 12 \int _C |df|^2 d {\rm vol}_C 
\eest
calculated using the Riemannian metric $g(u,v)=\frac12[\w(u, Jv)+\w(v, Ju)]$.  For $J$-holomorphic maps  the energy is the cohomological pairing  
\bear
\label{3.wEinequality}
E(f)=\om (f_*[C])=\om(A).
\eear
When compactness is important, we will restrict attention to the portion of the moduli space  {\em below energy level $E$}, written and defined as
$$
\ov\M^{J, E}(X)\ =\ \bigcup_{A, g, n} \ \Big\{f\in\ov\M^J_{A,g,n}(X)\, \Big|\, \mbox{$E(f)\le E$ and $3g-3+n\le E$}\Big\}.
$$
for $(J,\w)$ in $\J(X)$.  
Occasionally we may need to use a separate upper bound $E_1$ for $E(f)$ than the bound $E_2$ for $3g-3+n$, in which case $E=(E_1, E_2)$.\\

As $(J,\w)$ varies in $\J(X)$, the moduli spaces $\ov\M^J(X)$ fit together in an universal family $\ov\M(X)$ over $\J(X)$.  This  comes with several natural maps: the projection $\pr$ whose fiber at a fixed structure $J$ is the moduli space $\ov\M^J(X)$ and, for each $(A,g,n)$,  a stabilization-evaluation map $se$
\bear\label{3.M.diag}
\xymatrix{
 \ov\M_{A, g, n} (X) \ar[d]^{\pr} \ar[r]^{\qquad se \qquad} &  \ov \M_{g, n} \ti X^n 
\\ 
 \J(X)  &
}
\eear
 whose first component  takes the equivalence class of $f:C\ra X$ to the stabilization $st(C)$ in the Deligne-Mumford space $\ov\M_{g,n}$ and whose second component evaluates $f$   at the $n$ marked points (for  $2g-2+n\le0$ we define $st(C)$ as in   Remark \ref{R.ext.DM.unstable}).

To incorporate  maps with decorated domains, let  $\ov\M^{dec}_{A, g, n} (X)$ be the space of equivalence classes $[f, C, \rho, J]$ where $f:C\to X$ is  $J$-holomorphic map in $\oM_{A,g,n}(X)$ and $\rho$ is a decoration on $C$. The equivalence relation is given by diagrams
\bear
\label{3.decequivalence}
\xymatrix{
\tC'  \ar[d]^{\rho'}  \ar[r]_\cong &\tC\ar[d]^{\rho} & \\
 C' \ar[r]^\cong & C\ar[r]^f & X
}
\eear
As before, $[f, C, \rho, J]$ is stable if $\Aut (f, C, \rho)$ is finite or, equivalently, if the restriction of $(f,C,\rho)$ to every 
irreducible component   of $C$  either has finite automorphism group  or represents   a nontrivial homology class in $X$.

Decorated moduli space come with  a forgetful map $\phi :\ov\M^{dec}_{A, g, n} (X) \ra \ov\M_{A, g, n} (X) $ that forgets the decorations and which is natural with respect to the diagram  (\ref{3.M.diag.dec}),  so there are maps
\bear\label{3.M.diag.dec}
\xymatrix{
  \ov\M_{A, g, n}^{dec}(X) \ar[d]^{\phi} \ar[r]^{\qquad se \qquad } &  \ov \M_{g, n}^{dec} \ti X^n
   \ar[d] \\ 
  \ov\M_{A, g, n}(X) \ar[r]^{\qquad se \qquad } &  \ov \M_{g, n} \ti X^n 
}
\eear
of moduli spaces over $\J(X)$.

\subsection{Graphs maps and Ruan-Tian perturbations}
\label{Section3.1}

Restrict now to maps decorated by a twisted $G$-cover of their domains as in Example \ref{Ex.G} ($G$ could be the trivial group), and let $\oM^G(X)\ra \J(X)$ denote their moduli space that fits in the diagram (\ref{3.M.diag.dec}).

  A map $f:C\ra X$  whose domain is stable with $\Aut\; (C)=1$  has a graph 
$$
F:C\ra \ov\U_{g, n}\ti X
$$
defined by $F(z)=(\iota(z), f(z))$ where $\iota$ is the map in (\ref{iota.gr}) from $C$ to a fiber of the universal curve.  More generally, if  $f:C\ra X$ is a map whose domain is a stable twisted $G$-cover $\rho:\wt C\ra C$  (possibly with $G=1$) then the ``graph'' of $f$ is defined using  Diagram (\ref{iota.gr.2})  by 
\bear\label{graph.constr}
F=\iota_\rho\ti (f\circ \rho): \wt C\ra \ov\U^G_{g,n}\ti X.
\eear 

This also extends to the case when $\wt C\ra C$ is not necessarily stable as in Remark \ref{R.ext.DM.unstable}, in which case $F$ factors through the stable model $\st(\wt C\ra C)$. 
Observe that if $\Aut (C, \rho)=1$ then $\iota_\rho$, and therefore $F$, is an embedding.

Next  recall that the universal curve $\ov\U^G_{g,n}$   is projective; denote it by $\ov\U$ and fix an embedding $\ov\U\hookrightarrow \P^M $.    
 For each fixed $J$, let ${\mathcal V}(X)$ be the space sections of the bundle $\Hom^{0,1}(\pi_1^* T\P^M,  \pi_2^* TX)$ over $\P^M\ti X$  (i.e such that $J\circ \nu+\nu\circ j=0$).  Each $\nu\in{\mathcal V}(X)$ defines a deformation $J_\nu$ of the product almost complex structure on $ \ov\U \ti X$ by writing 
\bear\label{3.J.nu}
J_\nu= j\ti J \oplus (-\nu\circ j)= \l(\begin{matrix} j&0\\ -\nu\circ j &J\end{matrix}\r) 
\eear 
\begin{defn} 
\label{def3.2}
Let  $\JV(X)$ denote the space of  smooth  almost complex structures $J_\nu$ in the form  (\ref{3.J.nu});  its elements  can be written as  
pairs  $(J, \nu)$ with $J\in \J(X)$ and $\nu$ as above. 
\end{defn}
The equation $\del_{J_\nu} F=0$ on the graph $F:C\ra \ov\U \ti X$ or equivalently 
\bear\label{del=nu}
\del_J f(z)=\nu(z, f(z)) 
\eear
on the map $f:C\ra X$ will be called a  {\em Ruan-Tian perturbation} of the $J$-holomorphic map equation.   In the case of $G$-decorated maps, the corresponding equation on the pair 
$(f, \rho)$ is 
\bear\label{del=nu.dec}
\del_J \tilde{f} (z)=\nu(\iota_\rho(z),\; \tilde{f}(z)) 
\qquad\mbox{where $\tilde{f}=f\circ \rho$}.
\eear
Note that the map $(J, \nu)\mapsto J$ gives a fibration with a section $J\mapsto(J,0)$:
\bear\label{forget.nu}
\xymatrix{
\JV(X)\ar[r]&\ar@ /_1pc/ [l]   \J(X)
}
\eear

There is a corresponding extension of the universal moduli space $\oM^G(X)$ over $\JV(X)$ 
with a diagram (\ref{3.M.diag.dec})  where $\J(X)$ is replaced by $\JV(X)$. Note that $\JV(X)$, and therefore the perturbed pseudo-holomorphic map equation depends on the type of decorations used.

 The graph construction allows us to extend the space of allowable deformations (perturbations) of the pseudo-holomorphic map equation in a {\em very specific way to domain dependent ones}, defined by pullback from $\ov\U\ti X\hookrightarrow \P^M \ti X$ via the graph construction. As we will later see, the graph construction, combined with a fixed embedding $\ov\U \hookrightarrow \P^M$ has several other nice analytical consequences.

\vspace{1cm}

\setcounter{equation}{0}
\section{The topology of  $\ov\M_{g,n}(X)$}
\label{section4}
\medskip

The topology of the universal moduli spaces $\ov\M_{g,n}(X)$  is constrained by the requirement that  the maps used in Gromov-Witten theory  be continuous.  These constraints lead directly to a metric space topology on moduli space with all the desired properties --- see  Theorem~\ref{3.TopTheorem}.  Moreover, this topology is the same as the Gromov topology commonly used in the literature (see the Appendix). Everything in this section applies to the moduli spaces $\ov\M^G_{g,n}(X)$, but for simplicity we will often omit the $G$ from the notation.  

To topologize $\J(X)$, fix a Riemannian metric on $X$ and use the Sobolev   $W^{\ell,p}$ norms with  $\ell p\geq \dim X$.

\begin{lemma}
\label{4.Jtopology}
 The space $\J^\ell$      of tame $W^{\ell,p}$  pairs $(J,\w)$ on $X$ and the space  $\JV^\ell$ of triples $(J, \nu, \w)$ with $\nu$ as in Definition~\ref{def3.2} are smooth separable Banach manifolds.   
 \end{lemma}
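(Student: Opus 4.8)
The plan is to verify the standard fact that the relevant spaces of tame almost complex structures are Banach manifolds, paying attention to the two new features here: the Sobolev regularity class $W^{\ell,p}$ and the extra parameter $\nu$. First I would recall that, at a fixed point $x\in X$, the set of linear complex structures on $T_xX$ tamed by a given $\w$ is an open subset of an affine space modeled on the space of endomorphisms $A$ of $T_xX$ anticommuting with $J$; taming is an open condition, and positivity of the symmetric form $g(u,v)=\tfrac12[\w(u,Jv)+\w(v,Ju)]$ persists under small perturbations. Globalizing, a tame pair $(J,\w)$ consists of a symplectic form $\w$ (an open condition in the Banach space of closed $W^{\ell,p}$ two-forms, or one can fix $\w$ and vary only $J$ if one prefers) together with a section $J$ of the endomorphism bundle $\End(TX)$ with $J^2=-\mathrm{id}$ and $\w(\cdot,J\cdot)>0$. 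Since $W^{\ell,p}$ with $p>2$, $\ell\ge 1$ embeds in $C^0$ in dimension $\le 2\ell$ (or one simply takes $\ell$ large enough, or uses that only a $C^0$ topology is actually needed), sections are continuous, so the pointwise taming condition is open in the $W^{\ell,p}$ topology; this shows $\J^\ell$ is an open subset of a Banach manifold, hence a Banach manifold, with tangent space at $(J,\w)$ the $W^{\ell,p}$ sections $A$ of $\End(TX)$ with $AJ+JA=0$ (plus closed $W^{\ell,p}$ two-forms for the $\w$-direction).

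For $\JV^\ell$ one adds the coordinate $\nu$, which by Definition~\ref{def3.2} ranges over $W^{\ell,p}$ sections of the bundle $\Hom^{0,1}(\pi_1^*T\P^M,\pi_2^*TX)\to\P^M\times X$ cut out by the linear condition $J\circ\nu+\nu\circ j=0$. The subtlety is that this condition couples $\nu$ to $J$, so $\JV^\ell$ is not literally a product $\J^\ell\times(\text{Banach space})$. The clean way around this is to observe that the bundle $\Hom^{0,1}_J$ of $(0,1)$-forms is a smooth subbundle of $\Hom(\pi_1^*T\P^M,\pi_2^*TX)$ whose fiber varies smoothly with $J$, so the pairs $(J,\nu)$ form the total space of a (locally trivial, with Banach-space fibers) smooth Banach vector bundle over $\J^\ell$; a Banach vector bundle over a Banach manifold is again a Banach manifold. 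Equivalently, using the splitting $\nu=\tfrac12(\nu-J\nu j)$ one writes any $\Hom$-valued section uniquely as $(0,1)$-part plus $(1,0)$-part with respect to $J$, giving a smooth local trivialization and exhibiting $\JV^\ell$ locally as $\J^\ell\times W^{\ell,p}\big(\Hom^{1,0}\big)^\perp\cong$ open set in a Banach space. Separability is inherited from separability of the $W^{\ell,p}$ Sobolev spaces of sections of a vector bundle over a compact manifold (and over $\P^M\times X$, which is compact), and smoothness of the manifold structure follows because all the transition maps are built from the smooth, fiberwise-linear projections onto $(0,1)$-parts.

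The only real point requiring care — the main (minor) obstacle — is precisely this coupling of $\nu$ to $J$: one must check that the $J$-dependence of the subbundle $\Hom^{0,1}_J$ is smooth (indeed it is polynomial in $J$ once expressed via the projector $\nu\mapsto\tfrac12(\nu-J\circ\nu\circ j)$) and that the resulting local trivializations are compatible on overlaps, so that $\JV^\ell$ genuinely inherits a smooth Banach manifold structure rather than merely a topological one. Everything else is the routine verification that openness of taming, completeness and separability of $W^{\ell,p}$ Sobolev sections over compact manifolds, and the Banach manifold structure on an open subset of a Banach space go through as usual; I would state these and refer to the standard references (e.g. the analogous discussion in McDuff–Salamon) rather than reproduce the estimates.
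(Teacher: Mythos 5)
Your proposal is correct, and its core mechanism is the same as the paper's: use the Sobolev embedding $W^{\ell,p}\subset C^0$ (for $\ell p>2$) so that the pointwise open conditions (taming, nondegeneracy of $\w$) remain open in the $W^{\ell,p}$ topology, and get smoothness and separability from Sobolev spaces of sections over compact manifolds. The difference is in how the nonlinear constraint $J^2=-\mathrm{Id}$ is handled: the paper views $J$ as a $W^{\ell,p}$ section of the fiber bundle $F=\{J^2=-\mathrm{Id}\}\subset\End(TX)$ and quotes Palais' theorem that the space of such sections (in any norm stronger than $C^0$) is a smooth Banach manifold, then exhibits $\J^\ell$ as an open subset of ${\cal F}^\ell\ti V$ with $V$ the closed $W^{\ell,p}$ $2$-forms; you instead build charts by hand via endomorphisms anticommuting with $J$. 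Your phrase ``open subset of an affine space'' is not literally accurate (the constraint $J^2=-\mathrm{Id}$ is quadratic, so the set of complex structures is a curved submanifold of $\End(T_xX)$, only locally parametrized by anticommuting endomorphisms), but this is exactly the standard chart and the argument goes through. For $\JV^\ell$ the paper's written proof gives no explicit argument, so your treatment of the coupling of $\nu$ to $J$ --- realizing the pairs $(J,\nu)$ as a locally trivial Banach vector bundle over $\J^\ell$ via the fiberwise projector onto the $(0,1)$-part --- supplies a detail the paper leaves to the reader; only note that with the paper's convention $J\circ\nu+\nu\circ j=0$ the relevant projector is $\nu\mapsto\tfrac12(\nu+J\circ\nu\circ j)$ rather than $\tfrac12(\nu-J\circ\nu\circ j)$, a harmless sign/convention slip.
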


  \begin{proof}
Let $V$ be the Banach space  all $W^{\ell,p}$ $2$-forms that satisfy $d\w=0$ weakly.
The equation $J^2=-Id.$ defines a fiber bundle $F$ that is a submanifold of $End(TX)$. Let ${\cal F}^\ell$ be  the completion of the space of smooth sections of  $F$ in the $W^{\ell, p}$ norm.  Because of the  Sobolev embedding $W^{\ell, p}\subset C^0$ for $\ell p\geq \dim X$,  ${\cal F}^\ell$ (or the completion in 
any norm stronger than $C^0$) is a smooth Banach  manifold \cite{pal}.  Then $\J^\ell$  is an open set (determined by the tame condition and the non-degeneracy of $\w$) of the  manifold  ${\cal F}^\ell \ti V$ and hence is a manifold.
\end{proof}

To topologize the (universal) moduli space $\ov\M_{A,g,n }(X)\to \JV^\ell$ recall that  $\ov\M_{A,g,n }(X)$ is the set of isomorphism classes of triples $(f,C,\wt J)$ where $C$ is a (not necessarily stable)  genus $g$ curve with $n$ marked points, $\wt J$ denotes a triple $(J,\nu, \w)\in  \JV^\ell$, and  $f:C\to X$ is a $(J, \nu)$-holomorphic map.   This set comes with several natural  maps:  (i) a projection $\pi$ to the space of parameters $\J=\JV^\ell$, (ii)  stabilization-evaluation maps  maps $se$, (iii) the energy function $E(f)$,  (iv) forgetful maps $\phi_k$ and $\phi_G$ that forget the last $k$ marked points or the decorations $G$ and contract those components that become unstable.  All of these maps are invariant under reparametrization and  therefore descend to the moduli spaces, giving commutative diagrams:  
\bear
\label{3.naturalmaps}
\xymatrix{
\oM_{A, g,n}^G(X) \ar[r]^{se} \ar[d]_\pi & \oM_{A, g,n}^G\times X^n\\
\J & 
}
\hspace{.5cm}
\xymatrix{
\oM_{A, g,n+k}^G(X) \ar[r]^{se} \ar[d]^{\phi_k} & \oM_{g,n+k}^G\times X^{n+k} \ar[d]^{\phi_k}\\
\oM_{A, g,n}^G(X) \ar[r]^{se} & \oM_{g,n}^G\times X^{n}\\
}
\hspace{1cm}
\xymatrix{
\oM^G_{A, g,n}(X) \ar[d]_{\phi_G}\\
\oM_{A, g,n}(X) & 
}
\eear
These maps respect  the action of the symmetric group $S_n$  permuting the marked points, and when the domains are twisted $G$-curves also respect the $G$ action.

The maps in (\ref{3.naturalmaps}) induce another  important collection of maps.  First,  each $f\in\ov\M_{A,g,n}^G(X)$ is continuous,  so its ``graph''  (cf. (\ref{graph.constr}))
\bear
\label{3.defGammaf}
\Gamma_f =Image (F)= se_0(\phi_1^{-1}(f))\subset \ov\M_{g, n+1}^G\ti X
\eear
is a  compact subset of $\ov\M_{g, n+1}^G\ti X$.   Thus $f\mapsto \Gamma_f$ defines a map
\bear
\label{3.graphmap}
\Gamma:\oM_{A,g,n}^G(X) \ra Subsets_{c}(\ov\M_{g, n+1}^G\ti X)
\eear
where $Subsets_{c}(Z)$ denotes the set of compact subsets of $Z$.
 Similarly,   for  each $k\geq 2$ there is a ``multi-point graph map''
$$
\Gamma^k:\oM_{A,g,n}^G(X) \ra Subsets_{c}(\ov\M_{g, n+k}^G\ti X^k)
$$
defined by $\Gamma^k_f =se(\phi_k^{-1}(f))$ using (\ref{3.naturalmaps}).

 \begin{defn}
\label{3.Very-rough}
A topology  on the collection of all moduli spaces
$$
\oM_{g,n}^G(X)
$$
is called {\em natural} if  (i) the energy function and   all the maps $\pi$, $se$,  $\phi_G$, $\phi_k$ and $\Gamma^k$ above are continuous, and (ii) the actions of $G$ and $S_n$   are continuous.
\end{defn}
The following theorem gives the key topological properties of Gromov-Witten moduli spaces, extending Theorem 5.6.6 of \cite{ms2}.   Recall that a map is {\em perfect} if it is continuous,  surjective, closed and  all  fibers  are compact.
 
 \begin{theorem}
 \label{3.TopTheorem}
There is a natural topology on the universal moduli spaces $\ov\M_{g,n}^G(X)\to\JV^\ell(X)$ that is  metrizable and for which
\begin{enumerate}[(a)]
\item Each map $\pi: \oM^{G, E}_{g,n}(X)\ra \JV^\ell(X)$ is proper. 
\item The  maps  $\phi_k$ in  (\ref{3.naturalmaps}) are
 proper and perfect.  
\end{enumerate} 
\end{theorem}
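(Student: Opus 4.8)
The plan is to construct the topology explicitly by embedding each moduli space into a product of metrizable spaces via the natural maps required by Definition~\ref{3.Very-rough}, and then to verify properness and perfectness using Gromov compactness. First I would fix, for each energy level $E$, a suitable index set $K=K(E)$ of triples $(A,g,n)$ with $E(A)\le E_1$ and $3g-3+n\le E_2$, and consider the map
\be
\Phi\ =\ \Big(\pi,\ se,\ E,\ \{\phi_k\}_k,\ \{\Gamma^k\}_{k\ge 1}\Big)
\ee
from $\ov\M^{G,E}_{g,n}(X)$ into the product of $\JV^\ell(X)$, the compact orbifolds $\ov\M^G_{g,n+k}\ti X^{n+k}$, the interval $[0,E_1]$, the lower moduli spaces, and the Hausdorff-metric spaces $(Subsets_c(\ov\M^G_{g,n+k}\ti X^k),d_H)$ — each factor being a separable metric space. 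One checks, using standard arguments (elliptic regularity plus the fact that a stable map is determined up to reparametrization by its graph, i.e.\ by $\Gamma^k_f$ for $k$ large relative to $E$), that $\Phi$ is injective. The topology is then defined to be the one pulled back along $\Phi$; separability and metrizability are automatic, and continuity of the required maps is built in by construction. Naturality of the $G$- and $S_n$-actions is immediate since $\Phi$ is equivariant.

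For part (a), properness of $\pi:\ov\M^{G,E}_{g,n}(X)\to\JV^\ell(X)$: given a compact $Q\subset\JV^\ell(X)$, one must show $\pi^{-1}(Q)$ is sequentially compact. Take a sequence $(f_i,C_i,\wt J_i)$ with $\wt J_i\to\wt J_\infty$ in $Q$; the uniform energy bound $E(f_i)\le E_1$ and the uniform bound $3g-3+n\le E_2$ on the domain complexity are exactly the hypotheses of Gromov compactness (for $(J,\nu)$-holomorphic maps, applied to the graphs $F_i:C_i\to\ov\U^G_{g,n}\ti X$ as in~(\ref{graph.constr})), which produces a subsequence converging in the Gromov sense to a stable $(J_\infty,\nu_\infty)$-holomorphic limit. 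One then checks that Gromov convergence implies convergence of all the data $(\pi,se,E,\phi_k,\Gamma^k)$ — the convergence of $\Gamma^k_{f_i}\to\Gamma^k_{f_\infty}$ in Hausdorff metric is precisely a repackaging of Gromov convergence of the graphs — hence convergence in the $\Phi$-topology. Since $\JV^\ell(X)$ is metrizable, properness of the map follows; closedness of $\pi$ on the energy-bounded piece is a consequence.

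For part (b), the maps $\phi_k$ that forget the last $k$ marked points and contract unstable components: continuity is built into the topology, and the fibers $\phi_k^{-1}(f)$ are compact because over a fixed $f$ one is choosing $k$ additional points on the (compact) domain $C$ together with possible sprouting of ghost bubbles carrying those points, a space which is manifestly compact — this is also what makes $\Gamma^k_f=se(\phi_k^{-1}(f))$ a compact set, consistent with~(\ref{3.defGammaf}). Properness of $\phi_k$ then reduces, via a diagonal/Gromov-compactness argument over a convergent sequence in the target, to the same compactness input as in part~(a): a convergent sequence $f_i\to f_\infty$ together with choices of extra points $y_i$ has a subsequence whose combined data converges. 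Closedness of $\phi_k$ (hence perfectness, given continuity, surjectivity onto its image, and compact fibers) follows from properness together with the fact that both spaces are metrizable, via the standard lemma that a proper continuous map between metrizable spaces with locally compact image — or more simply, a proper map to a first-countable space — is closed.

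\medskip\noindent\textbf{Expected main obstacle.} The genuinely delicate step is verifying that the topology defined by pulling back along $\Phi$ coincides with the Gromov topology and, in particular, that $\Phi$ is an \emph{embedding} rather than merely a continuous injection — equivalently, that Gromov convergence is \emph{equivalent} to convergence of the finite package $(\pi,se,E,\{\phi_k\},\{\Gamma^k\})$. One direction (Gromov $\Rightarrow$ convergence of the package) is routine; the reverse direction requires showing that the graph data $\Gamma^k_f$ for $k$ bounded in terms of $E$ already sees the bubbling tree structure, the reparametrization of unstable domain components, and the matching conditions at nodes. This is where the fixed projective embedding $\ov\U^G_{g,n}\hookrightarrow\P^M$ and the graph construction of Section~\ref{Section3.1} earn their keep: they convert a sequence of maps from varying domains into a sequence of subsets of a \emph{fixed} compact space, for which Hausdorff convergence is the natural and manageable notion. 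Making the no-energy-loss and no-connectivity-loss parts of this equivalence precise — i.e.\ ruling out that a Hausdorff limit of graphs omits a ghost component or glues two limit maps incorrectly — is the technical heart, and is presumably relegated to the Appendix as the identification with the standard Gromov topology.
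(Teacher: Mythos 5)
Your proposal is correct and follows essentially the same route as the paper: the paper's Appendix also topologizes $\ov\M^{G}_{g,n}(X)$ by pulling back metric data through $\pi$, $\st$ and the multi-point graph maps $\Gamma^k$ into Hausdorff-metric spaces of compact subsets of $\ov\M^G_{g,n+k}\ti X^k$, proves the resulting pseudo-metric separates points by adding marked points until the domain is fine so the graph determines the map up to reparametrization (your ``$\Gamma^k$ for $k$ large'' step, which in the paper needs a short energy-accounting argument to exclude extra unstable components), and then deduces (a) and (b) from Gromov compactness together with Palais's observation that a proper continuous map to a metric space is closed. The comparison with the usual Gromov topology, which you flag as the main obstacle, is likewise deferred in the paper to a separate corollary and is not needed for the theorem itself.
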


\begin{proof}
The proof is given in the appendix.
\end{proof}

Theorem~\ref{3.TopTheorem} shows that  the universal moduli space is Hausdorff.  Statement (a) is a version of the Gromov Compactness Theorem;  it implies that the fiber $\oM^J_{A,g,n}(X)$  over each $J\in\J(X)$ is compact.     The Appendix also contains a proof of the fact (Corollary~\ref{A.cor}) that the topology in Theorem~\ref{3.TopTheorem} is, in fact, the usual Gromov topology defined in the literature.

 \vspace{1cm}


\setcounter{equation}{0}
\section{The VFC for domain-fine moduli spaces}
\label{section5}
\medskip
 
This section shows how standard results imply the existence of virtual fundamental classes for one very nice class of  moduli spaces: the 
``domain-fine''  moduli spaces defined below.  These are especially easy to work with because  the stabilization map takes their domain isomorphically to a fiber of the universal Deligne-Mumford curve, and because  gluing theorems apply without complications.   

\medskip

The following two terms will be used repeatedly in this and later sections.  
\begin{defn}
\label{defStableMap}
A stable map $f:C\to X$  
\begin{itemize}
\item  is {\em domain-stable} (ds) if  $C$ is a stable curve, and
\item is  {\em domain-fine} (df) if in addition,   $\Aut\; C=1$.

\end{itemize}
 A moduli space $\ov \M(X)$ is called  domain-stable  (resp. domain-fine) over $U\subset\J^\ell$ if, for every $J\in U$, each map
in $\ov\M^J(X)$ is  domain-stable  (resp. domain-fine).  
\end{defn}
Both properties are preserved under adding decorations:   if $\ov\M_{A, g, n}(X;J)$ is domain-fine or domain-stable, then  so is $\ov\M_{A, g, n+k}^{G}(X; J)$ for any $k\ge 0$ and any  finite group $G$.

 \medskip

\noindent{\bf Examples.}\vspace{-2mm}
\begin{description}
\item [1] For the trivial class $A=0$,  the moduli space $\ov \M_{0, g, n}(X)$ is domain-stable over all of  $ \JV^\ell$.
\item [2]  For genus $g=0$, all domain-stable maps are  domain-fine. 
\item[3] If $X$ is a curve of genus $g\ge 2$,  all stable maps  $f:C\to X$  are domain-stable (those of positive degree have  $\mbox{genus}\, C\geq 2$, and degree 0  stable maps  have stable domains).
\end{description}

\medskip

\begin{lemma}
\label{JstableOpenLemma}
For  fixed topological data  $(A, g, n)$, the sets
\best
\label{J.st.A}
\J_{ds}^\ell =\left\{ J \, |\, \oM^{J}_{A, g, n}(X)\mbox{ is domain-stable} \right\}  
\hspace{15mm}
\J_{df}^\ell = \left\{ J \, |\, \oM^{J}_{A, g, n}(X)\mbox{ is domain-fine} \right\}  
\eest
are open in the $C^0$ topology on $\J^\ell$.  The spaces $\JV_{ds}^\ell$  and $\JV_{df}^\ell$,   defined similarly,  are also open.
\end{lemma}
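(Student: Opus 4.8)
The statement is that the four sets $\J_{ds}^\ell$, $\J_{df}^\ell$, $\JV_{ds}^\ell$, $\JV_{df}^\ell$ are open in the $C^0$ topology. The plan is to derive this directly from the properness of $\pi$ established in Theorem~\ref{3.TopTheorem}(a), by showing that the complementary sets are closed. I will argue only for $\J_{ds}^\ell$ in detail; the argument for $\JV_{ds}^\ell$ is verbatim the same with $\JV^\ell$ in place of $\J^\ell$, and the domain-fine cases follow by an identical argument applied to a slightly finer ``bad'' locus (see the last paragraph).

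First I would fix the topological data $(A,g,n)$ and, to use compactness, restrict to the sub-moduli-space below an energy level $E$ that is large enough to contain all of $\ov\M_{A,g,n}(X)$: by \eqref{3.wEinequality} the energy $E(f)=\om(A)$ of a $(J,\w)$-holomorphic map is controlled by the pairing $\om(A)$, which varies continuously (indeed is locally constant on each path component of $\J^\ell$, and in any case bounded on compact sets of parameters), and $3g-3+n$ is fixed, so near any given $J_0$ the whole moduli space $\ov\M^J_{A,g,n}(X)$ lies in $\ov\M^{J,E}_{A,g,n}(X)$ for a uniform $E$. Thus it suffices to prove openness in a neighborhood of an arbitrary $J_0\in\J_{ds}^\ell$, working inside the proper family $\pi:\ov\M^{E}_{A,g,n}(X)\to\J^\ell$.

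Next I would identify the ``bad locus.'' Let $\Sigma\subset\ov\M^{E}_{A,g,n}(X)$ be the set of equivalence classes $[f,C,\wt J]$ whose domain $C$ is \emph{not} a stable curve, i.e.\ $C$ has a genus-zero component with fewer than $3$ special points or a genus-one component with no special point. The key claim is that $\Sigma$ is a \emph{closed} subset of $\ov\M^{E}_{A,g,n}(X)$. This is where the main work lies, and it is the step I expect to be the principal obstacle: I need that in the Gromov/metric topology of Theorem~\ref{3.TopTheorem}, domain-instability cannot appear only in a limit. Intuitively, under Gromov convergence $f_k\to f$ the combinatorial type of the domain can only \emph{degenerate} (components bubble off, nodes form), so the number of special points on each component of a limiting component is at least the ``limit'' of what was there before; a component that is unstable in the limit must already have been unstable, or have come from a bubble that carries a nontrivial homology class — but such a bubble cannot be domain-unstable in the topological sense because stability of the \emph{map} forces a genus-zero ghost component to have $\geq 3$ nodes/marked points. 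I would make this precise using the explicit description of the Gromov topology and the stabilization map: $se$ is continuous into $\ov\M_{g,n}$ (Definition~\ref{3.Very-rough}, Theorem~\ref{3.TopTheorem}), and $C$ is domain-stable precisely when $\st(C)$ is obtained from $C$ without contracting any component, equivalently when the map $\iota:C\to\ov\U_{g,n}$ of \eqref{iota.gr} is finite-to-one on no component being collapsed; closedness of $\Sigma$ then reduces to the (closed) condition that some component of the stable model of the limit curve has strictly smaller normalization than expected, which is detected by a lower-semicontinuous count of special points. Granting the claim, $\Sigma$ is closed in $\ov\M^{E}_{A,g,n}(X)$, hence $\pi(\Sigma)$ is closed in $\J^\ell$ because $\pi$ restricted to $\ov\M^E$ is a proper — in particular closed — map by Theorem~\ref{3.TopTheorem}(a). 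Since
$$
\J_{ds}^\ell \ =\ \J^\ell\setminus \pi(\Sigma),
$$
it is open, and openness of $\JV_{ds}^\ell$ follows identically.

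Finally, for the domain-fine versions, I would enlarge the bad locus: let $\Sigma'\subset\ov\M^{E}_{A,g,n}(X)$ consist of all $[f,C,\wt J]$ such that $C$ is not domain-stable \emph{or} $\Aut\,C\neq 1$. The second condition is again closed in the Gromov topology: the automorphism group of the domain can only \emph{jump up} under degeneration (a curve acquiring a symmetric pair of bubbles, or a smoothing destroying an automorphism happening only the other way), so $\{[f,C,\wt J]:\Aut\,C\neq 1\}$ is closed; concretely, $|\Aut\,C|$ is lower semicontinuous, which one can see from the continuity of $se$ into the coarse space $\ov\M_{g,n}$ together with the fact that the stack automorphism group is upper semicontinuous along the boundary strata of $\ov\M_{g,n}$. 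Then $\Sigma'$ is closed, $\pi(\Sigma')$ is closed by properness, and $\J_{df}^\ell=\J^\ell\setminus\pi(\Sigma')$ is open; likewise for $\JV_{df}^\ell$. The only genuinely delicate point throughout is the semicontinuity claims for the domain data under the metric topology of Theorem~\ref{3.TopTheorem}, which I would either cite from the Appendix's comparison with the standard Gromov topology (Corollary~\ref{A.cor}) or prove there alongside it.
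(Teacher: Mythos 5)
Your argument is essentially the paper's: both rest on the observation that, under Gromov convergence, limits of unstable domain components are unstable and the order of $\Aut\,C$ can only jump up (so the ``bad'' locus is closed in the energy-bounded moduli space), combined with properness of $\pi$ from Theorem~\ref{3.TopTheorem}(a) — your ``image of a closed set under a proper map is closed'' is the same tube-lemma step the paper phrases as covering the fiber over $J$ by good open sets and pushing down by compactness. The only blemish is terminological: you want $|\Aut\,C|$ to be \emph{upper} semicontinuous (as your own next clause and the paper state), not lower semicontinuous.
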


\begin{proof}
 Under Gromov convergence,   the order of the automorphism group is upper semi-continuous and limits of  unstable domain components are unstable.  Thus each $dr$ (resp. $df$) map $f$ has a neighborhood with the same property.  For $J\in \J_{ds}^\ell$  (or $\J_{df}^\ell$) these open sets cover  the moduli space  $\oM^{J}_{A, g, n}(X)$,  and hence by compactness cover the moduli spaces $\pi^{-1}(U)$ for an open neighborhood $U$ of $J$.  The same argument applies to $\JV_{ds}^\ell$  and $\JV_{df}^\ell$.
\end{proof}

 The next theorem describes the structure of moduli spaces of domain-fine  perturbed $J$-holomorphic maps for  fixed  $(A, g, n)$.  These facts are well-known; we  sketch the proof and refer the reader to \cite{rt1} and \cite{rt2} for details.   As in those papers,  a parameter $J\in \J^\ell$ is called  {\em regular} for 
$(A, g, n)$ if  the linearization of the $J$-holomorphic map equation on each stratum with fixed topological type is a surjective operator,  for every $J$-holomorphic map $f\in \ov\M^{J}_{A,g,n}(X)$. 
  
\begin{theorem} 
\label{Ruan-Tian}
 If $J\in\JV^{\ell}$ is both domain-fine and regular for $(A, g, n)$ then    
 $\M^{J}_{A,g,n}(X)$  is an oriented  manifold  of dimension
 \bear\label{dim.MX}
\iota= 2c_1(X)A +( \dim_\R\, X-6)(1-g)+2n,
\eear
with a compactification $ \ov\M^J= \M^J \cup B^J$ whose  ``boundary'' $B^J$  is a finite union of strata, each a manifold of dimension $\le \iota-2$.

Furthermore, over a regular path  $\gamma$ in $\JV^{\ell}_{df}$ the moduli space  $\M^{\gamma}_{A,g,n}(X)$ is an  oriented   cobordism of dimension $\iota+1$   with a compactification $\oM^{\gamma}=\M^{\gamma} \cup B^\gamma$ with codimension 2 boundary $B^\gamma$.  Finally, the set of regular $J$ is open and dense  in $\JV^{\ell}_{df}$, and the set of regular paths is open and dense in the space of paths in $\JV^{\ell}_{df}$. 
\end{theorem}
\begin{proof} For domain-fine maps,  one can use the variation in $\nu$ to show that the linearization of the equation (\ref{del=nu}) in $(J, \nu)$ is onto (essentially because the graph $F$ of $f$ is an embedding, thus somewhere injective).  Standard results then imply that  there is a dense set of regular values in $\JV_{df}(X)$ over which each stratum of $\ov\M_{A,g,n}^{J}(X)$ is smooth with dimension equal to the index of the linearization; the top stratum  $\M^{J}_{A,g,n}(X)$, consisting of maps with smooth domains, has dimension \eqref{dim.MX},  while the stratum consisting of maps whose domains have $k$ nodes has dimension $2k$ less than \eqref{dim.MX}.  The top stratum is  oriented by the determinant line bundle as in \cite{rt1} or \cite{ms2}.   The compactness statements follow from Theorem~\ref{3.TopTheorem}a. 
 \end{proof}

\medskip

The properties of the moduli space listed in Theorem~\ref{Ruan-Tian} imply that, for regular domain-fine $J$, the compactified moduli space carries a  fundamental cycle 
 in rational  \v{C}ech homology  (as stated, the theorem does not provide enough information to obtain a fundamental class in singular homology).   The key point is that, in \v{C}ech homology, every oriented topological manifold $\M^J$ carries a fundamental class $[\M^J]\in \cHH_\iota(\M^J, \Q)$ (compactness is not needed),  and when $B^J$ has codimension~2   as  in Theorem~\ref{Ruan-Tian},       the long exact sequence for the  pair $(\ov{\M}^J, B^J)$ gives an isomorphism
 $$
 \cHH_\iota(\M^J, \Q)\cong  \cHH_\iota(\ov\M^J, \Q).
 $$
 Under this isomorphism,  $[\M^J]$ corresponds to a class in $\cHH_\iota(\ov\M^J, \Q)$  that we denote $[\ov{\M}^J]$.  The existence of this class was noted by Donaldson and Kronheimer \cite[page 357]{DK}, and a detailed presentation will be given in   \cite{IPFC}.

\medskip

These facts about \Cech homology can be used to define a virtual  fundamental class  for the moduli spaces 
$\ov\M^J_{A, g, n}(X)$
that are the fibers of
\bear
\label{5.MtoJ}
\pi: \ov\M_{A, g, n}(X)  \longrightarrow \JV_{df}^\ell
\eear
over the domain-fine $\JV_{df}^\ell$ as follows. Using Theorem~\ref{3.TopTheorem},   the moduli space (\ref{5.MtoJ}) is a space with two metric topologies:    one constructed as in the proof of Lemma~\ref{3.TopTheorem} with $\J$ replaced by $ \JV_{df}^\ell$,  and a ``rough topology''  obtained by the same construction but  using  the $C^0$ distance  on  $\JV_{df}^\ell$ in formula \eqref{3.Hdist}.  Fix  $J_0\in\JV_{df}^\ell$.   By Lemma~\ref{JstableOpenLemma} there is a $C^0$ ball $U\subset \JV^\ell$ containing $J_0$ that lies in $\JV_{df}^\ell$. Theorem~\ref{Ruan-Tian} then shows that:

\begin{itemize}
\item For a dense set of regular $(J,\nu)$ in $U$ the moduli space over $(J,\nu)$ has a fundamental class
\bear
\label{5.FirstFundamentalClass}
[ \ov\M_{A, g, n}^{J,\nu}(X)]\in \cHH_\iota( \ov\M_{A, g, n}^{J,\nu}(X), \Z).
\eear 

\item For a dense set of regular paths $\gamma$ in $U$ with endpoints $(J_0, \nu_0)$ and $(J_1, \nu_1)$, the images of the maps induced by the inclusions  into the moduli space $\ov{\M}^{\gamma}$ over $\gamma$ are equal:
\bear
\label{5.1.FirstFundamentalClass}
[ \ov\M_{A, g, n}^{J_0,\nu_0}(X)]=[ \ov\M_{A, g, n}^{J_1,\nu_1}(X)] \in \cHH_\iota( \ov\M_{A, g, n}^{\gamma}(X), \Z).
\eear
\end{itemize}

\medskip

 \Cech homology has a second property that is important for our purposes:  it satisfies the continuity axiom: 
\bear\label{cech.cont} 
\ma\varprojlim_i \cHH_*(Y_i; \Q) =\cHH_*(Y; \Q)
\eear
for any  inverse system $\{\cdots \to Y_3\to Y_2\to Y_1\}$ of compact metric spaces with limit $Y$ (cf. \cite{milnor}).   For computations, it is useful to note that for locally compact Hausdorff spaces, rational \Cech homology
coincides with the Steenrod homology theory described in   \cite{milnor} and in Section~4 of \cite{ma2}).  Taking $\{Y_i\}$ to be a sequence of regular moduli spaces leds to our first theorem about virtual fundamental classes.

\begin{theorem}
\label{5.CechcycleLemma} Fix $(A, g, n)$ and $\JV_{df}$ as in  Lemma~\ref{JstableOpenLemma}.  The fundamental class (\ref{5.FirstFundamentalClass}) extends uniquely to a  \v{C}ech homology class  
\bear\label{VFC.J}
[ \ov\M_{A, g, n}^{J}(X)]^{vir} \in \cHH_*( \ov\M_{A,g,n}^{J}(X), \Q)
\eear
defined for every domain fine $J\in \JV_{df}$  with the property that for any smooth path $\gamma$ in $\JV_{df}$ from $J_0$ to $J_1$ the images under the maps induced by the inclusion 
\bear\label{VFC.ga}
[ \ov\M_{A, g, n}^{J_0}(X)]^{vir}=[ \ov\M_{A, g, n}^{J_0}(X)]^{vir}  \in \cHH_*( \ov\M_{A,g,n}^{\gamma}(X), \Q)
\eear
are equal. In particular, the GW class 
\bear
\label{5.firstGW}
GW_{A, g,n}(X)\;\ma =^{\mathrm {def}}\;(\st\ti \ev )_*[ \ov\M_{A, g, n}^{J}(X)]^{virt}\in H_*(\ov \M_{A,g,n} \ti X^n, \Q)
\eear
 is independent of $J$ on each path-component of  $\J_{df}$.  
\end{theorem}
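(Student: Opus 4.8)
The plan is to define the class $[\ov\M^{J}_{A,g,n}(X)]^{vir}$ at an arbitrary domain‑fine parameter $J$ as an inverse limit of the honest fundamental classes (\ref{5.FirstFundamentalClass}) over regular perturbations lying in compact arcs of $\JV_{df}^\ell$ that shrink down to $(J,0)$, and then to verify the path‑independence property (\ref{VFC.ga}), the uniqueness, and the Gromov--Witten corollary by the same inverse‑limit device. The only inputs are: the openness of $\JV_{df}^\ell$ in the $C^0$‑topology (Lemma~\ref{JstableOpenLemma}); the properness of $\pi$ on each energy sublevel (Theorem~\ref{3.TopTheorem}(a)), which makes $\pi^{-1}$ of a compact subset of $\JV_{df}^\ell$ a compact metric space; the density of regular parameters and regular paths together with the cobordism equality (\ref{5.1.FirstFundamentalClass}) (Theorem~\ref{Ruan-Tian}); and the continuity axiom (\ref{cech.cont}) for Cech homology with $\Q$‑coefficients.

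\emph{Construction at a point.} Fix domain‑fine $J$ and put $J_0=(J,0)\in\JV_{df}^\ell$. Using Theorem~\ref{Ruan-Tian}, pick regular parameters $(J_i,\nu_i)\to J_0$ and, for each $i$, a regular path $\ga_i$ in $\JV_{df}^\ell$ from $(J_i,\nu_i)$ to $(J_{i+1},\nu_{i+1})$ contained in the $C^0$‑ball of radius $1/i$ about $J_0$; this is possible since $\JV_{df}^\ell$ is open. Set $\Gamma_i=\{J_0\}\cup\bigcup_{j\ge i}\ga_j$, a compact arc in $\JV_{df}^\ell$; then $\Gamma_1\supset\Gamma_2\supset\cdots$ and $\bigcap_i\Gamma_i=\{J_0\}$. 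Since $\Gamma_i$ is compact the symplectic forms it carries form a $C^0$‑bounded set, so $\pi^{-1}(\Gamma_i)$ lies in a fixed energy sublevel and is a compact metric space by Theorem~\ref{3.TopTheorem}(a); moreover $\varprojlim_i\pi^{-1}(\Gamma_i)=\bigcap_i\pi^{-1}(\Gamma_i)=\ov\M^{J}_{A,g,n}(X)$ (at $\nu=0$ the perturbed equation is the $J$‑holomorphic map equation). Push each fundamental class $[\ov\M_{A,g,n}^{J_i,\nu_i}(X)]$ of (\ref{5.FirstFundamentalClass}) forward along $\ov\M^{J_i,\nu_i}_{A,g,n}(X)\hookrightarrow\pi^{-1}(\Gamma_i)$ to a class $c_i\in\cHH_*(\pi^{-1}(\Gamma_i);\Q)$. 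Under $\pi^{-1}(\Gamma_{i+1})\hookrightarrow\pi^{-1}(\Gamma_i)$ one has $c_{i+1}\mapsto c_i$, because the regular path $\ga_i\subset\Gamma_i$ produces, via (\ref{5.1.FirstFundamentalClass}), a cobordism identifying the images of $[\ov\M^{J_i,\nu_i}]$ and $[\ov\M^{J_{i+1},\nu_{i+1}}]$ in $\cHH_*(\pi^{-1}(\Gamma_i);\Q)$. Hence $(c_i)_i$ is an element of $\varprojlim_i\cHH_*(\pi^{-1}(\Gamma_i);\Q)=\cHH_*(\ov\M^{J}_{A,g,n}(X);\Q)$ by (\ref{cech.cont}); define $[\ov\M^{J}_{A,g,n}(X)]^{vir}$ to be the resulting class. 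When $J_0$ is itself regular one may take $(J_i,\nu_i)=J_0$ for all $i$, so this genuinely extends (\ref{5.FirstFundamentalClass}).

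\emph{Path‑invariance, uniqueness, and the GW class.} Independence of the class of the choices $\{(J_i,\nu_i)\}$, $\{\ga_i\}$ and the property (\ref{VFC.ga}) follow from the same scheme applied to compact families shrinking onto a path. Given a smooth path $\ga$ in $\JV_{df}^\ell$ from $(J_0,0)$ to $(J_1,0)$, choose a decreasing sequence of compact, regular $2$‑parameter families $Q_i\supset\ga$ with $\bigcap_i Q_i=\ga$ (a ``ribbon'' of width $1/i$ about $\ga$; regular $2$‑families are dense by the evident extension of Theorem~\ref{Ruan-Tian}), arranging that $Q_i$ contains the arcs $\Gamma_i^{(0)},\Gamma_i^{(1)}$ used above to build $[\ov\M^{J_0}]^{vir}$ and $[\ov\M^{J_1}]^{vir}$. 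Then $\pi^{-1}(Q_i)$ is a compact metric space, $\varprojlim_i\pi^{-1}(Q_i)=\ov\M^\ga_{A,g,n}(X)$, and a regular parameter in $Q_i$ yields a class $d_i\in\cHH_*(\pi^{-1}(Q_i);\Q)$ that is independent of the chosen parameter (any two are joined by a regular path inside $Q_i$, giving a cobordism) and compatible with the inclusions; the limit $d\in\cHH_*(\ov\M^\ga;\Q)$ receives both $\iota_{0*}[\ov\M^{J_0}]^{vir}$ and $\iota_{1*}[\ov\M^{J_1}]^{vir}$ under the comparison of inverse systems induced by $\Gamma_i^{(0)},\Gamma_i^{(1)}\subset Q_i$, whence these two agree — this is (\ref{VFC.ga}), and ranging $\ga$ over paths through $J_0$ it also gives the asserted independence of the construction. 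Uniqueness: any assignment satisfying (\ref{VFC.ga}) and restricting to (\ref{5.FirstFundamentalClass}) at regular parameters must, applying (\ref{VFC.ga}) to the path $\Gamma_i$ from $(J_i,\nu_i)$ to $J_0$, push $[\ov\M^J]^{\mathrm{assigned}}$ to $c_i$ in $\cHH_*(\pi^{-1}(\Gamma_i);\Q)$ for all $i$, so by the isomorphism $\cHH_*(\ov\M^J;\Q)\to\varprojlim_i\cHH_*(\pi^{-1}(\Gamma_i);\Q)$ of (\ref{cech.cont}) it must equal $[\ov\M^J]^{vir}$. Finally, $se=\st\ti\ev$ is continuous (Definition~\ref{3.Very-rough}) with target the compact orbifold $\ov\M_{A,g,n}\ti X^n$, on which $\cHH_*=H_*$ rationally; for $J,J'$ in one path‑component of $\J_{df}$, join $(J,0)$ to $(J',0)$ by a path $\ga$ in $\JV_{df}^\ell$ via the section of (\ref{forget.nu}), apply $se_*$ to (\ref{VFC.ga}), and use $se\circ\iota_J=se|_{\ov\M^J}$ to conclude that $GW_{A,g,n}(X)$ in (\ref{5.firstGW}) is independent of $J$ within that component.

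\emph{Main obstacle.} The essential difficulty is that $\JV^\ell$ is infinite‑dimensional, so $(J,0)$ has no compact neighborhoods: one cannot simply take $\pi^{-1}$ of a shrinking family of $C^0$‑balls and invoke (\ref{cech.cont}), because those preimages are not compact. The device above circumvents this by replacing neighborhoods with compact \emph{arcs} (for the pointwise construction) and compact \emph{$2$‑parameter ribbons} (for path‑invariance), but this forces one to manufacture, inside arbitrarily small $C^0$‑balls, the regular parameters, regular paths and regular $2$‑parameter families whose cobordisms glue the inverse systems together — that is, to have Theorem~\ref{Ruan-Tian} available in the form of genericity for $1$‑ and $2$‑parameter families (a routine Sard--Smale argument that nonetheless must be carried out). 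A related subtlety is that an arbitrary path $\ga$ may pass through non‑regular parameters, where there is no room to perturb $\ga$ itself; this is exactly why $2$‑parameter ribbons, not regular paths, are needed to establish (\ref{VFC.ga}).
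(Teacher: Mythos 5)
Your pointwise construction --- compact broken arcs $\Gamma_i$ shrinking to $(J,0)$, fundamental classes at regular parameters pushed into $\cHH_*(\pi^{-1}(\Gamma_i);\Q)$, compatibility via the cobordism identity (\ref{5.1.FirstFundamentalClass}), and the continuity axiom (\ref{cech.cont}) --- is exactly the paper's argument, and your deduction of the statement about (\ref{5.firstGW}) is fine. The genuine problem is your proof of (\ref{VFC.ga}) (and with it the independence-of-choices and uniqueness steps, which you route through the same device): it hinges on choosing ``compact, regular $2$-parameter families $Q_i\supset\gamma$''. Density of regular $2$-parameter families does not produce one containing a \emph{prescribed} path: the constraint $Q_i\supset\gamma$ forbids perturbing $Q_i$ along $\gamma$, and regularity of a $k$-parameter family at a solution $f$ lying over a parameter on $\gamma$ requires the image of the linearization $D_f$, augmented by at most $k$ directions (the derivatives of the equation in the family directions), to fill $\cok D_f$. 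Since $\gamma$ is an arbitrary smooth path in $\JV_{df}$, it may pass through parameters where some $f$ has $\mathrm{dim}\,\cok D_f\ge 3$, and then \emph{no} $2$-parameter family containing $\gamma$ can be regular over that parameter; so the step as stated can fail, and it is not ``the evident extension'' of Theorem~\ref{Ruan-Tian}, whose genericity statements concern families that may be perturbed freely.

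Your closing diagnosis that $2$-parameter ribbons are \emph{needed} is also the opposite of what the paper does: (\ref{VFC.ga}) is established with $1$-parameter regularity only. The paper's ladder argument never regularizes any object containing $\gamma$; it approximates $\gamma$ by nearby regular \emph{broken} paths whose vertices are regular parameters joined to the shrinking tails at $J_0$ and $J_1$ by regular rungs, and it compares the pushed-forward classes inside $\cHH_*$ of the moduli space over the compact union of $\gamma$, the tails, and the rungs --- a set that carries no transversality requirement at all, since all identifications happen after pushing forward by inclusions. Because these compact sets shrink to $\gamma$, continuity of rational Cech homology yields equality of the two images in $\cHH_*(\ov\M^{\gamma}_{A,g,n}(X);\Q)$ without ever requiring the connecting path to be homotopic to $\gamma$ through regular data. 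Your argument can be repaired by weakening what you demand of $Q_i$ --- only a regular path inside $Q_i$, kept away from the core $\gamma$, joining regular parameters on the two tails --- but at that point it coincides with the paper's ladder, so the $2$-parameter genericity you invoke is both unavailable as stated and unnecessary.
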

\pf
For simplicity we write $J$ to mean a pair $(J,\nu)$. For each $J\in \JV_{df}$, consider the balls $B_k \subset\JV^\ell$ consisting of all   $J'$ whose distance from $J$ in the $W^{\ell, p}$ norm is less than  $1/k$.   By Lemma~\ref{JstableOpenLemma},  $B_k$ lies in  $\JV_{df}$ for large $k$.   Furthermore, each  $B_k$ is path connected, contains a dense set of regular values 
$J$ for which (\ref{5.FirstFundamentalClass}) holds, and any two regular values  are connected by a regular path 
for which (\ref{5.1.FirstFundamentalClass}) holds. 

\begin{wrapfigure}[8]{r}{0.35\textwidth}
\psfrag{a}{$J_1$}
\psfrag{b}{$J_2$}
\psfrag{c}{$J_3$}
\psfrag{f}{$J'_2$}
\psfrag{g}{$J'_3$}
\psfrag{h}{$J$}
\psfrag{e}{$J_1'$}
\centering
      \includegraphics[width=0.3\textwidth]{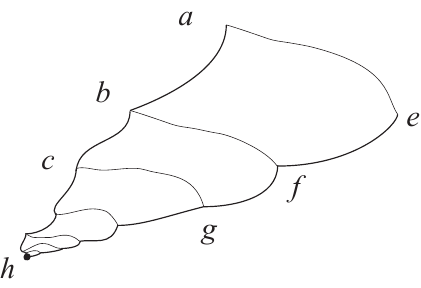}  
\end{wrapfigure}  
Choose a sequence $J_k\in B_k$ of regular points converging in $C^0$ to $J$ and regular paths $\gamma_k\subset B_k$ from $J_k$ to $J_{k+1}$.  For each $m$ 
$$
K_m \ =\ \{J\}\  \cup \  \bigcup_{k\ge m}\gamma_k
$$
 is compact,  and  $\ov\M_m=\pi^{-1}(K_m)=\ov\M^{K_m}_{A, g, n}(X)$ 
is  a  sequence of nested compact metric spaces (cf. Theorem~\ref{3.TopTheorem}) whose intersection is the compact  space $\ov\M^{J}_{A, g, n}(X)$.

For each regular value $J_k\in K_m$ the  images under the inclusions $\ov\M^{J_k}(X)\hookrightarrow \ov\M_m$  determine  a  rational  \v{C}ech homology class
\bear
\label{5.Cechdata}
VFC_{m}\;\ma=^\mathrm{def}\; [ \ov\M_{A, g, n}^{J_k}(X)]\in \cHH_\iota(\ov\M_m,  \Q).
\eear
As in   (\ref{5.FirstFundamentalClass})   and (\ref{5.1.FirstFundamentalClass}), this class is independent of $k$, and these homology classes are consistently related by the  inclusions $\ov\M_{m_1}\hookrightarrow \ov{\M}_{m_2}$ for $m_1\ge m_2$. The continuity axiom of rational   \v{C}ech homology then shows that there is  a well-defined limit 
\best
[ \ov\M^J(X)]^{vir}=\varprojlim_m VFC_m  \in\varprojlim_m \cHH_*(\ov{\M}_m, \Q) = \cHH_*( \ov\M^J_{A, g, n}(X), \Q)
\eest
 at each point $J\in \JV_{df}$.   If $K_m'$ is another such broken path, we can find regular paths  between $J_k$ and $J'_k$ inside $B_k$ as shown in the figure.  Then (\ref{5.1.FirstFundamentalClass}) shows that the two classes $VFC_m$ are compatible in the homology of the moduli space over $K_m\cup K'_m\cup\Gamma_m$.
 
 Relation (\ref{VFC.ga}) follows the same way by joining broken paths with different limit points in the same path component of $\J_{df}$.
Finally, observe that the rational classes (\ref{5.firstGW}) are locally constant in $J$, so are constant on path connected components of $\J_{df}$.  
\qed

 \vspace{1cm}
 

\setcounter{equation}{0}
\section{The VFC for domain-stable moduli spaces}
\label{section6}
\medskip

The construction of Section~5 defines a virtual fundamental class  over those elements of  $\JV^\ell$ with domain-fine moduli spaces.  We  next extend the construction to the larger class of domain-stable moduli spaces by decorating  domains with twisted $G$-covers.  

\medskip
 
As described after (\ref{3.M.diag}),   an element of $\ov\M^G(X)$ is an isomorphism class of data $(f, C, \rho, J)$ consisting of a map $f:C\to X$ and a  
$G$-twisted curve $\rho:\tC\to C$.   The $G$-structures    define a lifted  graph map (\ref{graph.constr}) into  $\ov\U^G\ti X\subset \P^M\ti X$ and an enlarged   space of perturbations  $\JV^G(X)$;  these perturbations depend on both $f$ and $\rho$.    Over $\J_{ds}(X)$ (but not over $\JV^G_{ds}(X)$!) the map $(f, \rho)\mapsto f$ that forgets the twisted $G$-cover defines     a  map $t$ that appeared in  (\ref{3.naturalmaps}) and that fits into the diagram
\bear
\label{6.diagram1}
\xymatrix{
\oM^G(X) \ar[d]_t\ar[r]& \oM^G(X)\ar[dd] \\
\ov\M(X) \ar[d]  &  \\
\J(X) \ar[r] & \JV^G(X)
 }
\eear
Equivalently, $t$ is the quotient map for the action of  $G$ on $\oM^G(X)$ by $g[f,\rho]=[f, \rho g^{-1}]$. There is also a  $G$-action on $\JV^G(X)$  induced from the action on universal curve $\ov\U^G$, but the righthand vertical map in (\ref{6.diagram1}) is  not $G$-invariant. In this context,  the results of Section~\ref{section5} hold on the set   $\JV_{df}^G(X)$ of $J\in \JV^G(X)$  for which the moduli space $\ov\M^{G,J}_{A,g,n}(X)$ is domain-fine:

 \begin{lemma}
\label{6.Ruan-Tian2}  Fix   $(A, g,n)$ and a finite group $G$. Then  Theorems~\ref{Ruan-Tian}  and \ref{5.CechcycleLemma} hold for the moduli spaces $\ov\M^G_{A,g, n}(X)$ over $\JV_{df}^G(X)$.  Hence there is a well-defined virtual fundamental class
\bear
\label{6.VFCondf}
[\M^{G, J}_{A,g, n}(X)]^{vir}
\eear
\end{lemma}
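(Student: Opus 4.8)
The strategy is to reduce everything to Section~\ref{section5} by checking that the graph construction for $G$-decorated maps has exactly the same analytic properties as in the undecorated case, and that the topological setup of Section~\ref{section4} applies verbatim. First I would observe that a $G$-decorated stable map $(f,C,\rho,J)$ with $\ov\M^{G,J}_{A,g,n}(X)$ domain-fine has, by definition, $\Aut(C,\rho)=1$, so by the remark after \eqref{graph.constr} the lifted graph $F=\iota_\rho\ti(f\circ\rho):\wt C\to \ov\U^G_{g,n}\ti X$ is an \emph{embedding}; in particular $F$ is somewhere injective. This is the one structural fact that drives the entire proof of Theorem~\ref{Ruan-Tian}: it lets one use the variation in $\nu\in\mathcal V(X)$ (now pulled back along the fixed projective embedding $\ov\U^G\hookrightarrow\P^M$) to show that the linearization of the perturbed equation \eqref{del=nu.dec} in $(J,\nu)$ is surjective, hence that there is a dense open set of regular $(J,\nu)$ in $\JV^G_{df}(X)$ over which each stratum of the moduli space is smooth of the expected dimension, and a residual set of regular paths.

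Next I would record that the dimension formula is unchanged: the index of the linearized operator on $\wt C$ for a $G$-cover $\rho:\wt C\to C$ is computed from the map $f:C\to X$, not from $\wt C$, since the decoration adds no deformation parameters to $f$ beyond those already present; the orientation by the determinant line of the real Cauchy-Riemann operator goes through as in \cite{rt1}. The compactness statements for $\ov\M^{G,E}_{A,g,n}(X)$ and for the moduli space over a regular path are immediate from Theorem~\ref{3.TopTheorem}(a), which, as noted at the start of Section~\ref{section4}, applies to the $G$-decorated moduli spaces. The gluing theorem of \cite{rt1} is local on the domain and near a node, and the balanced $G$-cover condition is preserved under the standard gluing model (it simply glues the two stacky branch data matching at the node), so the strata of $\ov\M^{G,J}_{A,g,n}(X)$ fit together to form a compact oriented stratified-smooth topological manifold over each regular $J$, and a cobordism over each regular path; this is precisely the content of Theorem~\ref{Ruan-Tian} for the $G$-decorated spaces. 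Openness and density of the regular set in $\JV^G_{df}(X)$ (and of regular paths among paths) follows from the Sard–Smale argument applied to the universal moduli space, exactly as before, using Lemma~\ref{JstableOpenLemma} for $\JV^G_{df}$.

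Finally, with Theorem~\ref{Ruan-Tian} established for the $G$-decorated spaces, the proof of Theorem~\ref{5.CechcycleLemma} is formal: for each $J\in\JV^G_{df}(X)$ one takes the $W^{\ell,p}$-balls $B_k$, which lie in $\JV^G_{df}(X)$ for large $k$ by Lemma~\ref{JstableOpenLemma}, chooses regular points $J_k\in B_k$ converging in $C^0$ to $J$ and regular paths $\gamma_k\subset B_k$, forms the nested compact spaces $\ov\M_m=\pi^{-1}(K_m)$ with $\bigcap_m\ov\M_m=\ov\M^{G,J}_{A,g,n}(X)$, and defines the virtual class \eqref{6.VFCondf} as the inverse limit of the integral fundamental classes $VFC_m=[\ov\M^{G,J_k}_{A,g,n}(X)]$ using the continuity axiom \eqref{cech.cont} of rational Cech homology; the ``ladder'' argument shows independence of all the choices, and the path-independence statement \eqref{VFC.ga} follows by joining broken paths with different limit points inside a path component of $\J^G_{df}$. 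I expect the only point requiring genuine care — rather than a literal citation of Section~\ref{section5} — is verifying that the perturbations $\nu$, which are pulled back from $\P^M\ti X$ via the graph of $f\circ\rho$ rather than via the graph of $f$ itself, still achieve transversality; the resolution is that $\iota_\rho$ is an embedding on the fine locus, so the composite graph $F$ is an embedding of $\wt C$, and somewhere-injectivity of $F$ is all that the transversality argument of \cite{rt1,ms2} uses.
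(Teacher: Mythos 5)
Your proposal is correct and follows essentially the same route as the paper: domain-fineness gives $\Aut(C,\rho)=1$, so the lifted graph $F$ is an embedding (hence somewhere injective), generic perturbations of the form (\ref{3.J.nu}) pulled back via the graph achieve transversality as in McDuff--Salamon, the relevant linearization is the one obtained by varying $f$ (the finitely many twisted $G$-covers contribute no extra deformations), and the Cech-limit construction of Theorem~\ref{5.CechcycleLemma} then applies without change.
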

 \begin{proof} As described at the end of Section~3, the pair $(f, \rho)$ is $(J, \nu)$-holomorphic if and only if its graph $F$ is $J_\nu$-holomorphic. Because of  the domain-fine assumption,      $\Aut(C, \rho)=1$ and hence  $F$ is embedding into the manifold $\P^M\ti X$.   McDuff and Salamon show (Chapter 3 of \cite{ms2}) that  generic perturbations of $J_\nu$ on $\ov\U^G\ti X$ give regularity for somewhere injective maps.  From their proof, one sees that it is enough to use perturbations of the form (\ref{3.J.nu}).  As in 
 Theorem~\ref{Ruan-Tian}, regular domain-fine moduli spaces carry a virtual fundamental class.  The proof of Theorem~\ref{5.CechcycleLemma} then applies without change.

Note that,   in proving  regularity,  the relevant linearization to the perturbed equation is obtained by varying $f\circ \rho$ through {\em $G$-invariant} maps.  This variation is determined by the variation in   $f$ because  the set of equivalence classes of  twisted $G$-covers $\rho:\tC\to C$ over a fixed $C$ is finite. (The linearization of the $J$-holomorphic map equation for  the map $\tilde{f}=f\circ \rho$ is  a different operator with a different  index). 
 \end{proof}

  We next verify  that the classes  (\ref{6.VFCondf}) are consistently defined when one replaces $G$ by a larger group.  Given an extension $0\to G\to K \to H\to 0$ of $H$ by $G$, there is an action of $G$ on the space of $K$-twisted maps whose quotient induces a map 
 \bear
 \label{6.HKmap}
 t:\ov\M_{A, g, n}^{K}(X) \ra \ov\M_{A, g, n}^{H}(X)
 \eear
 defined over $\J(X)$ and more generally over $\JV^H(X)$ where $\JV^H(X)\hookrightarrow\JV^K(X)$ is the map $\nu\ra t^* \nu$  induced by the map $t:\ov\U^K\ra \ov\U^H$ at the level of universal curves. 
 \begin{lemma} 
 \label{6.lemma2} Assume $J\in \J(X)$ is domain-fine for both  $\ov\M_{A, g, n}^{H}(X)$ and 
 $\ov\M_{A, g, n}^{K}(X)$. Then the virtual fundamental classes of (\ref{6.HKmap}) are defined and related by 
\bear\label{6.VFC.G}
 \frac{1}{|H|} [ \ov\M_{A, g, n}^{H, J}(X)]^{vir} \ =\ \frac{1}{|K|}\  t_* [ \ov\M_{A, g, n}^{K,J}(X)]^{vir}\in 
 \cHH_*( \ov\M_{A, g, n}^{H, J}(X), \Q)
 \eear
 \end{lemma}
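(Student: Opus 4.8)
The plan is to reduce the statement to the degree computation for the forgetful map $t$ on moduli spaces over a \emph{regular, domain-fine} parameter, and then propagate it to all domain-fine parameters by the same Čech-continuity argument used in the proof of Theorem~\ref{5.CechcycleLemma}. First I would note that, by Lemma~\ref{6.Ruan-Tian2}, both virtual fundamental classes in (\ref{6.VFC.G}) exist: $J$ is domain-fine and regular for both $\ov\M^{H}_{A,g,n}(X)$ and $\ov\M^{K}_{A,g,n}(X)$ on a dense set of perturbations, and the classes extend uniquely to all domain-fine $J$ by Čech continuity. Since the map $t$ in (\ref{6.HKmap}) is defined over $\JV^H(X)$ (pulling back $\nu$ along $\ov\U^K\to\ov\U^H$), it commutes with the nested compact exhaustions $\ov\M_m$ used in that proof, so it suffices to establish (\ref{6.VFC.G}) at a single regular domain-fine value $J$ where all moduli spaces are honest compact oriented topological manifolds; the general case then follows by applying $t_*$ to the inverse limits.

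At such a regular $J$, the statement becomes: $t:\ov\M^{K,J}_{A,g,n}(X)\to\ov\M^{H,J}_{A,g,n}(X)$ is a covering of degree $|K|/|H| = |G|$ on the top stratum, compatible with orientations, and likewise on every lower stratum — so that $t_*[\ov\M^{K,J}] = |G|\,[\ov\M^{H,J}]$, which is exactly (\ref{6.VFC.G}) after dividing by $|K|$. To see the degree, recall from Example~\ref{Ex.G} that for a fixed nodal curve $C$ with $n$ marked points the set of isomorphism classes of balanced twisted $G$-covers $\rho:\tC\to C$ is finite, and the extension $0\to G\to K\to H\to 0$ induces a free action of $G$ on the set of $K$-covers with quotient the set of $H$-covers — this is precisely the statement that $t:\ov\M^K_{g,n}\to\ov\M^H_{g,n}$ is the quotient by a free $G$-action, refining (\ref{deg.t}). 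Because a point of $\ov\M^{K,J}_{A,g,n}(X)$ is a pair $(f,\rho_K)$ with $f$ a $(J,\nu)$-holomorphic map into $X$ and $\rho_K$ a $K$-cover of its domain, and the forgetful map only changes $\rho_K$ to its image $H$-cover $\rho_H$ while keeping $f$ fixed, the fiber of $t$ over $(f,\rho_H)$ is the $G$-orbit of $K$-lifts of $\rho_H$, which has exactly $|G|$ elements (the domain-fine hypothesis guarantees $\Aut(C,\rho_K)=1=\Aut(C,\rho_H)$, so there is no stacky correction). The orientations agree because both determinant line bundles are pulled back from the same linearized operator for $f$ — as observed at the end of the proof of Lemma~\ref{6.Ruan-Tian2}, the relevant linearization is the one obtained by varying $f$, and $t$ identifies these operators on corresponding strata.

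The remaining point is to check that $t$ respects the stratification by topological type and is a $|G|$-fold cover stratum-by-stratum (not just on the top stratum), since the fundamental class of a stratified topological manifold and the gluing that assembles the strata both need this. This follows because the topological type of $(f,C,\rho)$ together with the combinatorial data of the $G$-cover determines, and is determined by, that of $(f,C,\rho_H)$ up to the finite ambiguity resolved above, so $t$ is a local homeomorphism compatible with the stratifications and with the gluing maps of Theorem~\ref{Ruan-Tian}. Then $t_*[\ov\M^{K,J}_{A,g,n}(X)] = |G|\,[\ov\M^{H,J}_{A,g,n}(X)]$ in $\cHH_\iota$ at every regular domain-fine $J$, and passing to the inverse limit over the exhaustions $\ov\M_m$ gives the identity (\ref{6.VFC.G}) for all domain-fine $J$.

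The main obstacle I anticipate is purely bookkeeping rather than conceptual: verifying that the forgetful map $t$ is genuinely a $|G|$-fold covering on \emph{each} stratum — including near nodes, where the balancing condition on twisted covers interacts with the gluing parameter — and that the orientation conventions (the determinant-line-bundle orientations of \cite{rt1}) are matched under $t$ rather than twisted by a sign depending on the stratum. Both should be routine given the domain-fine hypothesis (which kills all automorphisms and hence all stacky subtleties), but they are the steps that require care; everything else is a direct transcription of the proof of Theorem~\ref{5.CechcycleLemma} with $t_*$ applied throughout.
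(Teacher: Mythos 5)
Your proposal is correct and follows essentially the same route as the paper: pass to a regular perturbation pulled back from $\JV^H(X)$ where both moduli spaces are compact oriented topological manifolds (using that domain-fineness makes the $G$-action free and that the linearization is the one in the variation of $f$, so $H$-regularity transfers to the $K$-problem), identify $t$ as the degree $|G|=|K|/|H|$ quotient map, and then extend to all domain-fine $J$ by the Čech-continuity argument of Theorem~\ref{5.CechcycleLemma}. The extra bookkeeping you flag (stratumwise covering, orientations) is exactly what the paper compresses into the phrase "virtual degree $|G|$," so there is no substantive difference.
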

\begin{proof} Since the domain-fine is an open condition, there exists a sufficiently small $C^0$ neighborhood $U$ of $J$ in $\JV^H(X)$ over which both moduli spaces remain domain-fine.  By Lemma~\ref{6.Ruan-Tian2} there is an open dense set of regular $J'=(J, \nu)$ in $U\subset \JV_{df}^H(X)$ and for these the moduli space 
 $\ov\M_{A, g, n}^{H,J'}(X)$  is a compact oriented topological manifold. For the pullback $J'$ to $\JV^K(X)$ the moduli space $\ov\M^K(X)$ is by assumption also domain-fine, so the $G$ action on it is free and therefore the linearized operator for the space of $K$-twisted  maps  (described in the proof of Lemma~\ref{6.Ruan-Tian2}) is also regular.  

 Thus $\ov \M_{A, g, n}^{K,J'}(X)$ is also a compact oriented topological manifold and the quotient by the $G$-action gives the map
 $$
 t: \ov\M_{A, g, n}^{K, J'}(X)  \to  \ov\M_{A, g, n}^{H,J'}(X) 
 $$
 which has virtual degree $|G|=|K|/|H|$.  Thus (\ref{6.VFC.G}) holds for an open dense set of 
 $J'\in \JV^H_{df}(X)\cap \JV^K_{df}(X)$, and hence for all $J\in \J_{df}^H(X)\cap \J_{df}^K(X)$  by the continuity construction of Theorem~\ref{5.CechcycleLemma}.
\end{proof}

 \medskip

 Next consider  a moduli space $\ov\M_{A, g, n}^{J}(X)$ which is only domain stable.  Then 
 $\ov \M^{H, J}_{A, g, n}(X)$ is  domain stable for any finite group $H$. Let $G$ be any finite group with property (f) of Example \ref{Ex.G}, and consider the moduli space $\oM^{H\ti G}(X)$ of twisted $H\ti G$-structures, which is then domain-fine, and  therefore Lemma \ref{6.Ruan-Tian2} applies to it.  Referring to diagram (\ref{6.HKmap}), define 
  \bear\label{6.VFC.GH}
 [ \ov\M_{A, g, n}^{H, J}(X)]^{vir} \ =\ \frac{1}{|G|}\  t_* [ \ov\M_{A, g, n}^{H\ti G,J}(X)]^{vir}\in \cHH_*( \ov\M_{A, g, n}^{H, J}(X), \Q).
 \eear

  \begin{theorem} 
  \label{6.maintheorem} 
  The moduli spaces $ \ov\M_{A, g, n}^H(X)$ over the space of domain-stable $\J_{ds}(X)$ admit a virtual fundamental class (\ref{6.VFC.GH})  that   
  \begin{enumerate}
\item[(i)] is independent of the $G$ used in its definition,
\item[(ii)] is consistent under the quotient map as in (\ref{6.VFC.G}), 
\item[(iii)] for $H=1$ it extends the one over the domain-fine $\J_{df}(X)$ defined by Theorem~\ref{5.CechcycleLemma}.
\end{enumerate}

 \end{theorem}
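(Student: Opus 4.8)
The plan is to deduce everything from Lemma~\ref{6.lemma2}, using that the ``forget the $G$-cover'' maps attached to nested extensions of groups form a commutative system. First I would check that the right-hand side of (\ref{6.VFC.GH}) is even defined. If $G$ has property (f) of Example~\ref{Ex.G} and $J\in\J_{ds}(X)$, then over $J$ every twisted $(H\ti G)$-cover refines a twisted $G$-cover of a \emph{stable} curve, and the latter already has trivial automorphism group; hence $\ov\M^{H\ti G,J}_{A,g,n}(X)$ is domain-fine, Lemma~\ref{6.Ruan-Tian2} produces the class $[\ov\M^{H\ti G,J}_{A,g,n}(X)]^{vir}$, and since the forgetful map $t$ of (\ref{6.HKmap}) is a continuous map of compact metric spaces (Theorem~\ref{3.TopTheorem}) its pushforward on rational Cech homology is defined and lands in $\cHH_*(\ov\M^{H,J}_{A,g,n}(X);\Q)$. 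The same remark shows that any product refining one of these, such as $\ov\M^{H\ti G_1\ti G_2}$, is again domain-fine.

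For (i) and (iii), fix two groups $G_1,G_2$ with property (f); then the moduli spaces for $H\ti G_1$, $H\ti G_2$ and $H\ti G_1\ti G_2$ are all domain-fine over $J$. Regarding $H\ti G_1\ti G_2$ as an extension of $H\ti G_1$ with kernel $G_2$, and also as an extension of $H\ti G_2$ with kernel $G_1$, Lemma~\ref{6.lemma2} relates its virtual class to those of $H\ti G_1$ and of $H\ti G_2$. Pushing both identities forward to $\ov\M^{H,J}_{A,g,n}(X)$ and using that the two composite quotient maps $\ov\M^{H\ti G_1\ti G_2,J}\to\ov\M^{H,J}$ coincide --- each being the quotient by $G_1\ti G_2$ --- the two right-hand sides agree, and after clearing the common factor $1/|H|$ this gives $\frac1{|G_1|}t_*[\ov\M^{H\ti G_1,J}]^{vir}=\frac1{|G_2|}t_*[\ov\M^{H\ti G_2,J}]^{vir}$, which is (i). Statement (iii) is the special case $H=1$ and $G_2=1$: then $\ov\M^{1,J}=\ov\M^J$ is itself domain-fine, so Lemma~\ref{6.lemma2} (equivalently Lemma~\ref{6.Ruan-Tian2} together with Theorem~\ref{5.CechcycleLemma}) identifies $\frac1{|G|}t_*[\ov\M^{G,J}(X)]^{vir}$ with the class already defined by Theorem~\ref{5.CechcycleLemma}.

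For (ii), let $0\to G'\to K\to H\to 0$ be an extension with $\ov\M^{K,J}_{A,g,n}(X)$ (hence also $\ov\M^{H,J}_{A,g,n}(X)$) domain-stable, and pick $G$ with property (f). Then $\ov\M^{K\ti G}$ and $\ov\M^{H\ti G}$ are domain-fine, and $0\to G'\to K\ti G\to H\ti G\to 0$ is an extension to which Lemma~\ref{6.lemma2} applies. I would substitute the definitions (\ref{6.VFC.GH}) of $[\ov\M^{K,J}]^{vir}$ and $[\ov\M^{H,J}]^{vir}$, push forward along the relevant quotient maps, and use once more that quotienting $\ov\M^{K\ti G,J}$ first by $G$ and then by $G'$ agrees with quotienting first by $G'$ and then by $G$; the sought identity (\ref{6.VFC.G}) for the pair $(H,K)$ then follows directly from the Lemma~\ref{6.lemma2} relation between $\ov\M^{K\ti G}$ and $\ov\M^{H\ti G}$. (The analogous pushforward argument applied to the classes over a path shows, as in Theorem~\ref{5.CechcycleLemma}, that the class is constant along paths in $\J_{ds}$.)

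The proof is essentially a diagram chase, and its only substantive input, Lemma~\ref{6.lemma2}, is already in hand, so I do not expect a real obstacle. The two points requiring care --- rather than being genuine difficulties --- are verifying that each product-group moduli space that occurs remains domain-fine, so that Lemmas~\ref{6.Ruan-Tian2} and~\ref{6.lemma2} truly apply, and keeping track of the compatibilities among the various forgetful/quotient maps used in the pushforwards.
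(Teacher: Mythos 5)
Your proposal is correct and follows essentially the same route as the paper: the class is defined by (\ref{6.VFC.GH}), statement (i) is obtained by applying Lemma~\ref{6.lemma2} to the tower $\ov\M^{H\ti G_1\ti G_2}\to\ov\M^{H\ti G_i}\to\ov\M^{H}$ so that both definitions agree with the one induced by $G_1\ti G_2$, (ii) follows from the same lemma applied to the extension $K\ti G\to H\ti G$ together with commutativity of the forgetful quotient maps, and (iii) is the case $H=1$ of Lemma~\ref{6.lemma2}. Your write-up merely spells out the domain-fineness checks and pushforward bookkeeping that the paper leaves implicit.
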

\begin{proof}
Define the virtual fundamental class by (\ref{6.VFC.GH}).  If $G_1$, $G_2$ are two  finite groups that  satisfy condition (f) of  Example \ref{Ex.G}, then so does $G_1\ti G_2$.  Applying Lemma~\ref{6.lemma2} to   the diagram  

$\oM^{H\ti G_1\ti G_2}(X) \ra  \ov\M^{H\ti G_i}(X)\ra \oM^H(X)$  shows that the virtual fundamental class (\ref{6.VFC.GH}) induced by $G_1$ and $G_2$ are both equal to  the one induced by $G_1\ti G_2$ so (i) holds. Part (ii) follows in a similar fashion from Lemma~\ref{6.lemma2}.  Part (iii) follows by taking $H=1$ in Lemma~\ref{6.lemma2}. \end{proof}

A little more work extends the virtual fundamental class of Theorem~\ref{6.maintheorem} over the space $\JV^G(X)$ in Diagram~\ref{6.diagram1}.   For each $(J,\nu)\in \JV^G(X)$,  $G$ acts on the moduli space $ \ov\M^{G, J, [\nu]}(X)$ over the orbit $[\nu]$.  For a dense set of generic $(J,\nu)$, the action of $G$ on $(J,\nu)$ will be free and the moduli space $ \ov\M^{G, J, [\nu]}(X)/G $ will be regular. Applying the limiting argument of Theorem~\ref{5.CechcycleLemma} then extends the VFC over $\JV_{ds}^G/G$ and defines  
 \best
 [ \ov\M_{A, g, n}^J(X)]^{vir} \ = \ \frac 1{|G|}\lim_{\nu\ra 0} [ \ov\M_{A, g, n}^{G, J, [\nu] } (X)/G]\in 
 \cHH_*( \ov\M_{A, g, n}^J(X), \Q).
  \eest
  for all $J\in \J_{ds}(X)$.

  \vspace{1cm}
 

\setcounter{equation}{0}
\section{Relative moduli spaces}
\label{section7}
\medskip

The  set of moduli spaces $\ov\M_{A,g,n}^G(X)$ is contained in the larger set of  moduli spaces relative a divisor $V\subset X$. For $G=1$, this was  done in the \cite{IP1} for smooth divisors, and was recently generalized in \cite{i-nor} to normal crossing divisors.   This section  reviews  those aspects of the theory that are needed later.

 A smooth divisor $V$ is an embedded codimension 2 submanifold of $X$ that is $J$-holomorphic for some tame pair $(J, \om)$. More generally, a normal crossing divisor $V$  in $X$ is the union of closed immersed codimension 2 submanifolds that are $J$-holomorphic for some  tame pair $(J, \w)$,   and are in general position; see Definition 1.3 in \cite{i-nor} for precise details.   For each pair $(X,V)$ there is a stratification of $X$ whose depth $k\ge 0$ strata $V^k$ correspond to points in $X$ where at least $k$ different local branches of $V$ meet. Moreover, for each integer $m\ge 0$ there is an associated deformation space ${\cal Z}_m\to D^m$ over the polydisk whose total space ${\cal Z}_m$ is a subset of a finite dimensional manifold.   For smooth divisors $V$, the generic fiber of ${\cal Z}_m$ is diffeomorphic to $X$ and the central fiber is $X_m=X\cup \P_V\cup \cdots \cup \P_V$,  obtained by attaching  $m$ copies of the compactified normal bundle   $\P_V=\P(\cx\oplus NV)$, identifying the zero section of one to the infinity section of the next.    For  general normal crossing divisors $V$ there is a similar deformation space whose central fiber $X_m$ is a level $m$ ``building'' constructed from the strata of $V$.   In all cases, there is also a map $\pi_V:X_m\to X$ that collapses all copies of $\P_V$ to $V$.

\medskip

The relative moduli spaces are constructed in much the same way as the usual moduli spaces $\oM_{A,g,n}(X)$ with three  main differences  (see \cite{IP1}  and \cite{i-nor} for details): 
\begin{enumerate}
\item[(a)]  One restricts $(J, \w)$ to  be in the subspace $\J^\ell(X,V)\subset\J^\ell(X)$ of  triples that satisfy   a condition (``$V$-adapted'')  on the 1-jet of $J$ along $V$. 
\end{enumerate}
In particular, this implies that $V$ is $J$-holomorphic, and thus  some $J$-holomorphic maps  into $X$ can have components mapped into  $V$.
\begin{enumerate}
\item[(b)]  For each map  $f\in \ov\M_{A,g,n}(X)$ with no components or nodes mapped to $V$, one marks all the points 
in $f^{-1}(V)$ and  records their  intersection multiplicities $s$ with the various branches of $V$ to obtain an open set $\M_{A, g, n, s}(X,V)$ of maps with extra $\ell(s)$ marked points and contact data $s$.
\item[(c)] For each $s$, there is a compactification $\ov\M_{A, g, n, s} (X, V)$ as described in \cite{i-nor}. 
\end{enumerate}
As a set, $\ov\M_{A, g, n, s} (X, V)$ consists of equivalence classes of certain type of maps $f:C\ra X_m$ where maps are equivalent if they are related by an  isomorphism of their domains and the  rescaling of their target  $X_m$ induced by the $\cx^*$ action on each positive level $\P_V$.  While all these $f$ are stable as maps into $X_m$, their projections $\wh f=\pi_X\circ f$ to $X$  may have unstable components that are mapped to a point of $V$.  Each such component, called a {\em trivial component}, has its image in a fiber of $\P_V$ and its domain is an unstable rational curve with precisely two marked points.
When $V$ is smooth, the maps $f$ in the compactification  have no nontrivial components in the total divisor of $X_m$, thus all nontrivial components of $f$ have well-defined contact data to the total  divisor and $f$ satisfies a matching condition along the singular divisor of $X_m$ (cf. \cite{IP1}). When $V$ has normal crossings, trivial components of $f$  in the singular divisor cannot be avoided, but then the nontrivial components satisfy an enhanced matching condition (cf. \cite{i-nor}).  This matching can be expressed in terms of the inverse image of a certain diagonal $\De$ under an  evaluation map $\Ev$ which fits in the diagram 
\bear\label{rel.pr.st.ev}
\pi\ti se\ti \Ev: \ov\M_{A, g, n, s}(X) \ra \J(X,V) \ti \left(\ov\M_{g, n+\ell(s)} \ti X^n\right) \ti N_sV
\eear
and which is a lift of the usual evaluation map.   (As described in \cite{i-nor}, $\Ev$  keeps track of the  multiplicities $s$ as well as of  the leading coefficients of  $f$ at the extra $\ell(s)$ contact points to $V$.)   Such a map $f$ is called {\em relatively stable} if its automorphism group is finite, or equivalently if $f$ has at least one nontrivial domain component in each positive level. 

\medskip

The relative moduli spaces come with several functorially-defined maps. There is a diagram
\bear\label{7.pi.to.X}
\xymatrix{
 \ov\M_{A, g, n,s}(X, V)\ar[r]^{\phi_V} \ar[d]_{\pi} & \ov\M_{A, g, n+\ell(s)} (X)\ar[d]_{\pi}
\\
\J(X,V) \ar@{^{(}->}[r] & \J(X)
}
\eear
where $\phi_V$ is the map that takes  $f$ to the map ${\wh f}$ obtained from $\pi_X\circ f$ after collapsing all the trivial domain components of its domain:
 \bear\label{contr}
\xymatrix{
C \ar[r]^f \ar[d]^{\ct}&X_m\ar[d]^{\pi_X}
\\
\wh C \ar[r]^{\wh f} & X
}
\eear
 Here $\wh C=\ct(C)$ is called the {\em contracted  domain} of $f$. It is different from the stable model $\st(C)$,  which is obtained by  contracting all unstable domain components, not just the trivial ones.

  The normal crossing divisors in $X$  form a directed system,  partially ordered by inclusion;  the  smallest element is the empty divisor.  There is a corresponding directed system of moduli spaces.  Whenever $V\subset V\cup V'$ there is a diagram generalizing  (\ref{7.pi.to.X})  
 \bear
\label{7.Vdirectedsystem}
\xymatrix{
 \ov\M_{A, g, n, s\cup s'}(X,V\cup V')\ar[d]_{\pi}\ar[rr]^{\phi_{V'}}& &
\ov\M_{A, g, n+\ell', s}(X,V) \ar[d]^{\pi}
\\ 
\J(X,V\cup V')\ar@{^(->}[rr] &&
\J(X,V)
}
\eear
that forgets some components and that is  compatible with the map (\ref{rel.pr.st.ev}).

 In the special case of Diagram~\ref{7.pi.to.X} when $(X, V)=(\P_V, V_0)$,  the projection $\pi:\P_V\ra
V$ induces a map 
 \bear\label{pi.to.V}
\xymatrix{
 \ov\M_{A, g, n,s}(\P_V, V)\ar[r]^{\phi_V} \ar[d]_{\pi} & \ov\M_{\pi_*A, g,
n+\ell(s)} (V)\ar[d]_{\pi}
\\
\J(\P_V, V) \ar[r] & \J(V)
}
\eear
except when $\pi_*A=0$, $g=0$ and $n+\ell(s)<3$.  In this context, we enlarge the definition of ``trivial component'' of $f$ to include unstable genus zero curves whose image under $\wh f: \wh C\ra V$ is a constant.  
\begin{rem}\label{R.ext.M.unstable} As discussed in Remark \ref{R.ext.DM.unstable}, it is convenient to extend the definition of the moduli space $\oM_{A, g, n}(X)$ into the unstable range 
$A=0$, $2g-2+n\le 0$ by defining $\ov\M_{0, g, n}(X)$ to be the topological space  $X\ti \ov\M_{g, n}$. With this definition, the map (\ref{pi.to.V}) extends to the unstable range.  \end{rem}

Most of the discussion in the previous sections extends to the relative case.  The next several paragraphs describe the minor modifications needed.

\bigskip

\noindent{\bf A. The topology of moduli space.}  The refined Gromov convergence described in \cite{i-nor} similarly induces a metrizable topology on the relative moduli space. This topology is the same as  the one  obtained by regarding the relative moduli space as a subset of  $\ov\M_{A, g, n+\ell(s)}({\cal Z})$ (with the images of all maps landing in the fibers of ${\cal Z}$).  
A sequence $f_n:C_n\ra X$ of relatively stable maps in $\M_s(X, V)$ converge if there exists an $m$ and a sequence of rescaling parameters $\la_n\in (\cx^*)^m$ of the target such that the rescaled maps 
$R_{\la_n} f_n$, regarded as maps from $C_n$ into the fiber of ${\cal Z}$ over $\la_n$ converge to a relatively map $f_0:C_0\ra X_m$ into the central fiber of $\cal Z$ over $\la=0$.  This now involves a choice of local trivialization/identification of both the domains and targets of the maps away from their singular locus.  

 With the topology of Lemma~\ref{4.Jtopology}, the bottom map in  (\ref{7.pi.to.X}) is a continuous injection.  We first give the relative moduli space $ \ov\M_{A, g, n,s}(X, V)$ the topology induced by pullback by the map $\phi_V$ in  (\ref{7.pi.to.X}) from the Gromov topology on the absolute moduli space $\ov\M(X)$. The graph maps  into $\ov\U\ti {\cal Z}$ similarly induce the metric topology on $\ov\M(X,V)$. 
 
\bigskip

\noindent{\bf B. Domain-stable and Domain-fine.}  Definition~\ref{defStableMap}  extends  to  relatively stable maps as follows.

 \begin{defn}\label{D.super-st} A  map $f\in \ov \M_{A, g, n,s}^J(X,V)$ is  {\em domain-stable} (resp. {\em domain-fine}) if  ${\wh f}$ is domain-stable (resp. domain-fine).
 \end{defn}
With this definition a moduli space $\ov\M_{A, g, n,s}(X,V)$ is domain-fine if and only if its image in $\ov\M_{A,g, n+\ell(s)}(X)$ under the map (\ref{7.pi.to.X}) is domain-fine. 

\bigskip

\noindent{\bf C. Twisted $G$-structures.} In the decorated version of the relative theory, the moduli space $\ov\M^G_s(X, V)$  consists of equivalence classes $[f, \rho]$ where $\rho:\wt C\ra C$ is a twisted $G$-cover and $f:C\ra X_m$ is a smooth map in a level $m$ building as before. The equivalence relation is now up to isomorphisms of the twisted $G$-covers $\rho: \wt C\ra C$ as well as the $(\cx^*)^m$ rescaling action on the target  $X_m$. The moduli space $\ov\M^G_s(X, V)$ comes with a natural $G$-action whose quotient $\ov\M^G_s(X, V)/G= \ov\M_s(X, V)$ is the original undecorated relative moduli space. 

\bigskip

\noindent{\bf D. Ruan-Tian perturbations.}
In Diagram~\ref{7.pi.to.X}, the image of the bottom map may not contain any regular $J$. Thus we will enlarge $\J^\ell(X,V)$ to  space $\JV^\ell(X,V)$ of perturbations using the construction of Section~\ref{Section3.1}.  Note, however, that this new space $\JV^\ell(X,V)$ does {\em not} embed in the space  
$\JV^\ell$ of Section~\ref{Section3.1}.   
 
Regarding relatively stable maps as maps into the fibers of ${\cal Z}$, there is a graph map $F:\tC\to \ov\U\ti {\cal Z}$ as in (\ref{graph.constr}) with $\ov\U= \ov\U_{g,n+\ell(s)}^G$. Again, we fix an embedding $\ov\U\ra \P^M$. Let $\JV^\ell(X,V)$ be the space of tame pairs $(J_\nu,\w)$ on  $\ov\U\ti {\cal Z}$ where $J_\nu$, generalizing (\ref{3.J.nu}), preserves the fibers of both the universal curve $\ov\U$ and of ${\cal Z}$, and also satisfies the $V$-compatibility condition (see Definition  3.2 of \cite{IP1} and its extension \cite{i-nor}).  Note that the graph map factors through ${\wh C}$, so trivial components remain trivial under these perturbations.   

Diagram (\ref{6.diagram1}) similarly extends to the relative case. 
\bigskip
 
\noindent{\bf E. Twisted decorated relative moduli spaces.} To describe a smooth model of the relative stable map compactification $\ov\M_s(X,V)$ for domain-fine maps,  we also need to include a  choice of roots of the leading coefficients  of $f$ at all the contact points with the total divisor (see \cite{i} and \cite{i-nor}).  Adding a choice of these roots defines another resolution $\ov \M^{s}(X, V) \ra \ov \M_s(X, V) $ of the relative moduli space and more generally $\ov \M^{G,s}(X, V) \ra \ov \M_s^G(X, V) $ of the decorated relative moduli spaces for any finite group $G$.
These fit as the top row in the diagram:
 \bear\label{MsG.to.Ms}
\xymatrix{
 \ov\M^{G, s}(X, V)\ar[r]^{t } \ar[d]_{\pi} & \ov\M^s(X, V) 
 \ar[d]_{\pi}
\\
\ov\M_s^G(X, V) \ar[r]^t &\ov\M_s(X, V)
}
\eear
The moduli spaces in the top row come  a natural action of $\Mu_s$, the product of the cyclotomic groups of roots of unity at each one of the contact points to $V$, whose quotient gives rise to the moduli spaces in the bottom row, while the moduli spaces in the left column come with a natural $G$ action whose quotient are the spaces in the right column. Of course, there is also a symmetric group action $S_\ell$ reordering the contact points to $V$. 

\begin{ex} For a fixed nodal marked curve $(C, x)$,  the relative moduli space $\ov\M_s(C, x)$ is the space of admissible covers constructed by Mumford-Harris, and while its resolution $\ov\M^s(C, x)$ is the moduli space of twisted (balanced) covers constructed by Abramovich-Vistoli.  Moreover, when 
$\Aut(C, x)=1$, the space of twisted $G$-covers of $(C, x)$ considered in \cite{acv} is a particular  example of the relative moduli space $\ov\M^s(C, x)$ of twisted covers $\rho:\wt C\ra C$.  \end{ex}

When $G=1$ and $V$ is smooth, the boundary strata of $\ov\M(X, V)$ consisting of maps into a level 1 building $X_1=X\cup \P_V$ can then be described in terms of its resolution 
\bear\label{resolution}
\ov\M^{s}(X, V)\ti_{\Ev} \ov\M^{s}(\P_V, V_\infty\cup V_0)\longra \ov\M(X, V)
\eear
obtained by (a) ordering the marked points corresponding to the nodes along the singular locus 
$V=V_\infty$, and (b) choosing a root of the leading coefficients of $f$ at these nodes. The local model of the relative moduli space near $f$ is 
\bear\label{la=mu}
\la = a_1b_1 \mu_1^{s_1}=\dots= a_\ell b_\ell \mu_\ell^{s_\ell} 
\eear 
where $a_i$, $b_i$ are the leading coefficients of $f$ at the node $x_i$. The zero locus of equation (\ref{la=mu}) is not smooth at the origin when $\ell\ge 2$. One obtains a smooth resolution of (\ref{la=mu}) by a base change after replacing the variables $(a_i, b_i)$ with $(\alpha_i, \beta_i)$ where $\alpha_i^{s_i}=a_i$ and  
$\beta_i^{s_i}=b_i$. The antidiagonal $\Mu_\ell$ action preserves the $\al_i \beta_i$ and gives rise to the balancing condition. There is also a symmetric group action $S_\ell$ that reorders the $\ell$ nodes. 

\medskip

 With this set-up, the dimension counts and transversality results proved in  \cite{IP1} and \cite{i-nor}  give the following analog of Theorem~\ref{Ruan-Tian}.

\begin{lemma}
 \label{7.Ruan-Tianlemma} 
For a Baire set of  $J\in \JV^G_{df}(X,V)$,  the relative moduli space 
$\M^J=\M_{A, g, n}^{G,s,  J}(X, V)$ is an oriented  manifold of dimension
 \bear
 \label{7.dim.MX}
\iota= 2c_1(X)A -2V\cdot A +( \dim_\R X-6)(1-g)+2(n+\ell(s))
\eear
with a compactification $ \ov\M^J= \M^J \cup B^J$ whose  ``boundary'' $B^J$  is a finite union of strata, each a manifold of dimension $\le \iota-2$. Furthermore, over a regular path $\gamma$ in $\JV^{\ell}_{df}$, the 
moduli space $\M^\gamma$  an  oriented   cobordism of dimension $\iota+1$ with a compactification  $\oM^{\gamma}=\M^{\gamma} \cup B^\gamma$ with codimension 2 boundary $B^\gamma$.   Finally, the set of regular $J$ is open and dense  in $\JV^{\ell}_{df}(X,V)$, while the set of regular paths is open and dense in the set of paths in $\JV^{\ell}_{df}(X,V)$. 
\end{lemma} 
As in Section~5, Lemma~\ref{7.Ruan-Tianlemma}  is all that is needed to obtain a virtual fundamental class in \v{C}ech homology; no  knowledge of how the strata fit together is needed.
 
\begin{theorem}
\label{7.Maintheorem}  The collection of moduli spaces in diagram (\ref{MsG.to.Ms}) admit compatible VFCs over  $\J_{ds}(X, V)$, and  for any path between $J_0$ and $J_1$ in $\J_{ds}(X, V)$ the VFC over $J_0$ and $J_1$ have the same image under the inclusion maps as in (\ref{VFC.J}). In particular 
\best
[ \ov\M_{A, g, n,s}^{J}(X,V)] ^{vir}=\frac 1{|\Mu_s||G|}\pi_*[ \ov\M_{A, g, n}^{G,s, J}(X,V)] ^{vir}  \in \cHH_*( \ov\M_{A, g, n,s}^{J}(X,V); \Q)
\eest
extends the one of   that appears in (\ref{6.VFC.GH}) when $V=\emptyset$. 
\end{theorem}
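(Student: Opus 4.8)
The plan is to mirror, in the relative setting, the three-stage construction already carried out in Sections \ref{section5} and \ref{section6}, using Lemma \ref{7.Ruan-Tianlemma} as the analytic input in place of Theorems \ref{Ruan-Tian} and \ref{5.CechcycleLemma}. First I would establish the VFC on the domain-fine locus $\JV^\ell_{df}(X,V)$: by Lemma \ref{7.Ruan-Tianlemma}, for a Baire set of generic $(J,\nu)$ the moduli space $\ov\M^{G,s,J}_{A,g,n}(X,V)$ is a compact oriented manifold of dimension \eqref{7.dim.MX}, oriented by the determinant line bundle, and regular paths give oriented cobordisms. The compactness statements come from part A (the topology of relative moduli space, which embeds into $\ov\M_{A,g,n+\ell(s)}(\mathcal Z)$) together with Theorem \ref{3.TopTheorem}a applied to that ambient space. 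Then the exact limiting/continuity argument of the proof of Theorem \ref{5.CechcycleLemma} --- choosing shrinking $W^{\ell,p}$ balls $B_k$, picking regular $J_k\in B_k$ converging to $J$, connecting them by regular paths $\gamma_k$, forming the nested compact spaces $\ov\M_m=\pi^{-1}(K_m)$ and invoking the continuity axiom \eqref{cech.cont} of rational \v Cech homology --- produces $[\ov\M^{G,s,J}_{A,g,n}(X,V)]^{vir}\in\cHH_*$ for every domain-fine $J$, with path-independence as in \eqref{VFC.ga}. Since the domain-fine condition is defined via $\wh f$ and is open (Lemma \ref{JstableOpenLemma} together with Definition \ref{D.super-st}), all hypotheses are in place.

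Second I would pass from domain-fine to domain-stable by the same device used in Section \ref{section6}: given a merely domain-stable relative moduli space, tensor the $G$-structure with a finite group (again with property (f) of Example \ref{Ex.G}) to make the decorated space domain-fine, and use the quotient maps $t$ in \eqref{MsG.to.Ms} --- the $\Mu_s$-quotient resolving \eqref{la=mu} via the base change $\alpha_i^{s_i}=a_i$, $\beta_i^{s_i}=b_i$, and the $G$-quotient --- to push the class down. One checks the analogues of Lemmas \ref{6.Ruan-Tian2}, \ref{6.lemma2}: on the domain-fine locus the $G$-action (and the $\Mu_s$-action) is free, so the relevant linearized operators remain regular, the quotient maps are honest covers of degrees $|G|$ and $|\Mu_s|$, and hence $\pi_* = (t_{\Mu_s})_*\circ (t_G)_*$ multiplies the fundamental class by $|\Mu_s|\,|G|$ at the manifold level; the continuity construction then propagates the identity $[\ov\M^{J}_{A,g,n,s}(X,V)]^{vir}=\frac{1}{|\Mu_s||G|}\pi_*[\ov\M^{G,s,J}_{A,g,n}(X,V)]^{vir}$ to all domain-stable $J\in\J_{ds}(X,V)$, and independence of the auxiliary $G$ follows by comparing $G_1$, $G_2$ and $G_1\times G_2$ exactly as in Theorem \ref{6.maintheorem}(i).

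Third, for the ``extends'' clause I would specialize to $V=\emptyset$: then $\ell(s)=0$, $\Mu_s$ is trivial, the resolution \eqref{resolution} is the identity, $X_m=X$, and the whole relative apparatus degenerates to the absolute one, so the formula reduces verbatim to \eqref{6.VFC.GH}. The path-independence statement follows by joining broken paths with different endpoints inside a fixed path component of $\J_{ds}(X,V)$, just as in the last paragraph of the proof of Theorem \ref{5.CechcycleLemma}, noting that $\J_{ds}(X,V)$ inherits the relevant path-connectedness of balls from Lemma \ref{4.Jtopology}.

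The main obstacle I expect is purely bookkeeping rather than conceptual: verifying that the perturbation space $\JV^\ell(X,V)$ of part D --- which, as the text emphasizes, does \emph{not} embed in $\JV^\ell$ --- still carries enough perturbations to achieve transversality for the \emph{graph} equation while respecting the $V$-compatibility condition, the fiber structure of $\mathcal Z_m$, and the requirement that trivial components stay trivial. This is exactly what Lemma \ref{7.Ruan-Tianlemma} asserts, so I would quote it; the real care is in checking that the metric topology of part A makes $\pi$ proper and the inclusions $\ov\M_{m_1}\hookrightarrow\ov\M_{m_2}$ genuine maps of nested compact metric spaces, so that the \v Cech continuity axiom applies unchanged --- and in confirming that pulling $(J,\nu)$ back along $t:\ov\U^K\to\ov\U^H$ preserves both domain-fineness and regularity of the $K$-twisted linearization, since that linearization (varying $f\circ\rho$ through $G$-invariant maps) is a different operator than the naive one, a point the proof of Lemma \ref{6.Ruan-Tian2} already addresses in the absolute case and which transfers verbatim.
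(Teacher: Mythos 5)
Your proposal is correct and follows essentially the same route as the paper, whose own justification of Theorem~\ref{7.Maintheorem} is simply that Lemma~\ref{7.Ruan-Tianlemma} supplies the transversality/gluing input so that the constructions of Sections~\ref{section5} and \ref{section6} (the \v{C}ech continuity argument over shrinking balls and regular paths, followed by the $G$- and $\Mu_s$-quotients of virtual degree $|G|$ and $|\Mu_s|$ in diagram (\ref{MsG.to.Ms})) apply verbatim in the relative setting. Your added attention to the points the paper leaves implicit --- properness via the embedding into $\ov\M(\mathcal{Z})$, trivial components staying trivial under perturbations in $\JV^\ell(X,V)$, and the reduction to (\ref{6.VFC.GH}) when $V=\emptyset$ --- is consistent with the paper's set-up and introduces no gap.
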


 \begin{rem}\label{R.symMS} The symmetric group $S_\ell$ acts on the extra $\ell=\ell(s)$ contact points to $V$ inducing an action on the moduli space $\ov\M_{s}(X, V)$ over $\J(X)$. In the arguments that follow, it will be convenient to also consider the quotient by this $S_\ell$ action, in which case the resulting relative moduli space
 \bear
\label{symmetrizedMS}
\ov\M_{[s]}^{J}(X, V)=\l. \ov\M_{s}^{J}(X, V)\r/S_{\ell(s)}
\eear
will be called the {\em symmetrized relative moduli space} and the subscript $s$ will change to $[s]$. It has a corresponding VFC
\best
[ \ov\M_{A, g, n,[s]}^J(X,V)] ^{vir}= \frac1 {\ell(s)!} \pi_*[ \ov\M_{A, g, n,[s]}^J(X,V)] ^{vir} \in \cHH_*( \ov\M_{A, g, n,[s]}^J(X,V); \Q)
\eest
defined as the pushforward by the quotient map $\pi: \ov\M_{s}(X, V)\ra \ov\M_{[s]}(X, V)$ over 
$\J_{ds}(X)$.  With this definition, the symmetrized relative $GW$ invariant corresponds to an unordered sequence of $\ell$ multiplicities $[s]$ and is equal to
\best
GW_{[s]}= \frac 1 {\ell!}  GW_s\in H_*(X^n\ti (\ov\M_{g,n+\ell}\ti V^\ell)/ S_\ell; \Q)
\eest

 \end{rem}

\medskip 

The upshot of this discussion is that the symmetrized relative moduli space $\ov\M_{[s]}(X,V)$ also has smooth representatives constructed by breaking the symmetry and turning on a generic (non-equivariant) perturbation. To fully break the symmetry, one needs to order the marked points and the components of the domain, add a suitable group $G$ for genus $g\geq 1$,  and finally also add the roots of the leading coefficients.

\vspace{1cm}


\setcounter{equation}{0}
\section{Stabilizing divisors} 
\label{section8}
\medskip

This  section  introduces the  notion of a stabilizing divisor  for a pair $(X, V)$ where $V$ is a normal crossing divisor in a closed symplectic manifold $X$.   Stabilizing divisors will be crucial in the next two sections.   Here we present some of their important properties and show that Donaldson's Theorem implies that stabilizing divisors exist in abundance on any symplectic manifold. 

 To start, we fix $X$,  a normal crossing divisor  $V$ and a subset $B$   of  $\J(X, V)$.   To enumerate the strata of moduli spaces,   let 
  $$
  \C_{E, B}
  $$ 
be   the collection of all triples $(A, g, k)$ with $A\in H_2(X)$ such that:
 \begin{enumerate}[(i)]
 \item $A\not= 0$.
\item $E(A,g)= \max \{\w(A), 3g-3\} \leq E$ (``energy less than $E$'').
\item For some $J\in B$,  $\oM^J_{A,g}(V^k)$ is not empty, i.e.  $A$ is represented by a $J$-holomorphic   map  from a genus $g$ curve  into the  stratum $V^k$ of  $(X, V)$ with depth $k\ge 0$ (including the top stratum $V^0=X$). 
\end{enumerate}

\begin{defn}
\label{def8.1}
A smooth codimension 2 submanifold $D\subset X$ is an  {\em $E$-stabilizing divisor for $(X, V, \omega)$ on $B$}  if   
 \begin{enumerate}[(i)]
\item $D$ is transverse to $V$ and there exists  $(\w, J')\in \J(X,V)$  in  $B$ such that $D$ is $J'$- holomorphic.
\item $D$ is sufficiently positive in the sense that  
\bear\label{st. ineq.V}
D \cdot A \ge c_1(\wt V^k)A+\dim_\cx V^k +2g+1 \qquad \mbox{for all }  (A, g, k)\in \C_{E, B}
\eear
where $\wt V^k$ is the smooth resolution of the  closed stratum $V^k$ of $V$ (cf. \cite{i-nor}). 
\end{enumerate}  
\end{defn}
\non If $D$ is a stabilizing divisor for $(X, V)$ we let $$\J_D(X,V)$$ be the set of all $(J,\w)\in \J(X,V)$ such that $D$ is $J$-holomorphic.

\begin{prop}
\label{M.s-stable.V} 
Suppose that $D$ is an  $E$-stabilizing divisor for $(X, V)$ at the point $B=(J_0, \w_0)$ in $\J(X,V)$. Then there is a $C^0$ ball $U$ around  $B$ in $\J_D(X, V)$ such that for an  open dense and  path-connected set  $\wh U$ of $J\in U$: 
\begin{enumerate}
\item[(a)] the only $J$-holomorphic genus $g$ maps into $D$ with $E(A,g)\le E$ are constant; 
\item[(b)] the relative moduli spaces $\ov\M_{A, g, n, s}^{J}(X,V\cup D)$ with $E(A, g)\le E$  are domain-stable; 
\end{enumerate}
Statements (a) and (b)  remain true in generic 1-parameter families $\{J_t\}$  in $U$. 
\end{prop}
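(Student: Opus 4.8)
The plan is to shrink to a $C^0$-ball on which Gromov compactness bounds the homology classes in play, prove (a) and (b) there by combining the transversality theory for simple curves in the strata of $D$ with the positivity hypothesis \eqref{st. ineq.V}, and then read off the stated properties of $\wh U$ --- together with persistence along generic paths --- from the observation that the exceptional loci have real codimension at least two. So, first: by the Gromov compactness statement Theorem~\ref{3.TopTheorem}(a) the set of homology classes carried by components of $J$-holomorphic stable maps of energy $\le E$ is locally finite in $\J(X,V)$, hence there is a $C^0$-ball $U\subset\J_D(X,V)$ about $B=(J_0,\w_0)$ over which this set already lies in the finite collection of classes realized at $B$; in particular $\C_{E,J}\subseteq\C_{E,B}$ for every $J\in U$, so the finitely many inequalities \eqref{st. ineq.V} hold uniformly on $U$. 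Every $J\in U$ makes $D$ holomorphic, and since $D$ is transverse to $V$ each $D\cap\wt V^k$ is a $J$-holomorphic hypersurface in the smooth stratum $\wt V^k$.

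For (a), perturbations of $J$ inside $\J_D(X,V)$ that are tangent to $D$ (and to the deeper strata) are unconstrained, so the transversality theory of Chapter~3 of \cite{ms2} --- adapted to the stratification as in \cite{cm} --- produces a Baire-generic set of $J\in U$ such that, for each $(A,g,k)\in\C_{E,B}$, the space of simple $J$-holomorphic genus $g$ maps with image in $D\cap\wt V^k$ is a smooth manifold of dimension $2\bigl(c_1(\wt V^k)A-D\cdot A\bigr)+(\dim_\R V^k-8)(1-g)$, the index for the hypersurface $D\cap\wt V^k$. By \eqref{st. ineq.V} this is strictly negative (indeed $\le-10$ in the only non-vacuous range $\dim_\cx V^k\ge2$), so the space is empty; as every non-constant $J$-holomorphic curve factors through a simple one, this gives (a). Because the index is $\le-10$, it stays negative after adding one parameter, so (a) also holds at every $t$ for a generic path in $U$.

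For (b), fix a generic $J\in U$ as above and, using the same perturbations, also so that every simple non-constant $J$-holomorphic curve in each $\wt V^k$ is regular and transverse to $D\cap\wt V^k$. Let $f\in\ov\M_{A,g,n,s}^{J}(X,V\cup D)$ with $E(A,g)\le E$ and let $\wh C=\ct(C)$. A component $C'$ of $\wh C$ that fails to be stable is a genus $0$ curve with at most two special points; by the construction of the relative compactification such a $C'$ has $\wh f|_{C'}$ non-constant, of class $0\ne A'$ with image in a stratum $\wt V^j$ and --- by (a) --- not contained in $D$. Write $A'=dA''$ for the class of the underlying simple (genus $0$) curve $C''$ in $\wt V^j$. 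The existence of $C''$ forces $c_1(\wt V^j)A''\ge 3-\dim_\cx V^j$ (otherwise its moduli space has negative dimension and is empty), so \eqref{st. ineq.V} applied to $A''$ gives $D\cdot A''\ge4$; and since $C''$ is transverse to $D\cap\wt V^j$, the branched cover $C'\to C''$ meets $D$ in at least $D\cdot A''\ge4$ distinct points, each of which is one of the marked points of $\wh C$ recording contact with $D$. So $C'$ has at least four special points --- a contradiction. Hence every component of $\wh C$ is stable, i.e.\ $\ov\M_{A,g,n,s}^{J}(X,V\cup D)$ is domain-stable. Along a generic path, transversality of $C''$ to $D\cap\wt V^j$ can fail at isolated times, but only by a single point of contact order $2$, still leaving at least $D\cdot A''-1\ge3$ distinct $D$-contact points on $C'$; so (b) persists at every $t$. (Here is where the ``$+1$'' in \eqref{st. ineq.V} is used.)

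Finally, let $\wh U\subset U$ be the set of $J$ satisfying (a) and (b). By the above its complement lies in a countable union of $C^0$-images of universal moduli spaces over $U$ --- of simple curves in the strata of $D$, and of the loci where a regularity or transversality used in (a), (b) fails --- all of real codimension $\ge2$; so $\wh U$ is open and dense, and, being the complement of a countable union of codimension-$\ge2$ subsets of the ball $U$, path-connected. The same bound shows a generic path in $U$ misses every such locus, which is the last assertion. I expect the main obstacle to be Part~(b) --- turning the single numerical inequality \eqref{st. ineq.V} into the combinatorial statement that every non-contracted component of $\wh f$ acquires at least three marked points --- with the delicacy lying in (i) multiple-cover components, handled by passing to the underlying simple class; (ii) the interaction of the stratification of $(X,V)$ with the levels $X_m$ of the relative target, so that the $D$-contact points genuinely survive on $C'$ in the contracted domain rather than being absorbed into a $\P_V$- or $\P_D$-level bubble; and (iii) making every estimate uniform over $U$ and simultaneously over generic one-parameter families. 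Establishing the strata transversality beyond the domain-fine case of Lemma~\ref{7.Ruan-Tianlemma} is the remaining technical cost.
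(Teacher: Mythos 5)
Your setup and part (a) follow the paper's route (shrink to a $C^0$-ball on which the finitely many classes of Lemma~\ref{L.top.not.jumps} control (\ref{st. ineq.V}), then stratified transversality plus a negative index count kills non-constant simple curves in $D\cap \wt V^k$, hence all covers). For part (b), however, your Cieliebak--Mohnke-style point count has a genuine gap at its linchpin: you assume that for generic $J$ every simple curve $C''$ in a stratum $\wt V^j$ is \emph{transverse} to $D\cap\wt V^j$, and conclude it meets $D$ in $D\cdot A''\ge 4$ distinct points. That genericity statement is not available: the locus of simple curves with a tangency to $D$ has (real) codimension $2$ in their moduli space, so for generic $J$ it is merely a codimension-$2$ subset, which is nonempty whenever the moduli space has dimension $\ge 2$. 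Since your $C''$ is not a generic member of its moduli space but whatever curve underlies a component of the given relative stable map, it may well lie in that tangency locus, and then the number of \emph{distinct} contact points can be far smaller than $D\cdot A''$ (in the extreme, a single point of multiplicity $D\cdot A''$), destroying the ``at least three special points'' conclusion; your path argument (``transversality fails only at isolated times by a single order-$2$ contact'') rests on the same false premise. The correct repair is exactly the paper's mechanism: instead of transversality, bound the dimension of the space of \emph{simple} maps with at most two contact points to $D$ --- i.e.\ the relative moduli space $\ov\M_s(\wt V^k, \wt{V^k\cap D})$ with unstable domain, whose dimension $c_1(\wt V^k)A - D\cdot A + (\dim_\cx V^k-3)(1-g)+\ell(s)+n$ accounts for multiplicities through $\ell(s)-D\cdot A$ and is $\le -2$ by (\ref{st. ineq.V}) when $\ell(s)+n<3-2g$ --- so such curves are generically absent even in $1$-parameter families, with no transversality-to-$D$ assertion needed.

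Two further loose ends: (i) you yourself flag, but do not resolve, the building/level bookkeeping (``delicacy (ii)'') needed to know that an unstable component of $\wh C$ is non-constant into a stratum of $X$ and that its $D$-contacts survive as special points rather than being absorbed into $\P_V$- or $\P_D$-level bubbles; the paper sidesteps this by working with the projection $\wh f$, treating components over $D$ (constants, resp.\ covers of fibers of $\P_D$) separately, and running the relative dimension count on the strata. You also omit the unstable genus-$1$ case, which the paper's hypothesis ``$g\le 1$ and $\ell(s)+n<3-2g$'' covers. (ii) Your final paragraph does not actually give openness of $\wh U$: a complement of a countable union of ``codimension $\ge 2$'' images need not be open (and in a Banach ball that codimension language is only heuristic); the paper instead observes that (a) and (b) are $C^0$-open conditions (Gromov compactness, as in Lemma~\ref{JstableOpenLemma}) and gets path-connectedness of $\wh U$ from openness, density, and genericity of paths in the path-connected ball $U$.
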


\begin{proof}
This follows by a standard dimension count argument; note that (a) and (b) are $C^0$-open conditions on $J$. For simplicity, we first prove this for the case $V=\emptyset$, in which case (\ref{st. ineq.V})   becomes
\bear\label{st. ineq}
D \cdot A \ge c_1(X)A+\dim_\cx\, X+2g+1  \qquad \mbox{for all }  (A, g)\in \C_{E, J_0}
\eear
First notice that  (\ref{st. ineq}) is a topological condition on what kind of strata appear in the moduli space $\ov \M^{J_0}(X)$ below energy level $E$. By Lemma \ref{L.top.not.jumps}, there is a $C^0$-ball  $U$ around  $J_0$ with $\C_{E, U}=\C_{E, J_0}$,  so the inequality  (\ref{st. ineq}) holds  for all non-trivial   $J$-holomorphic maps with $J\in U$. 

Assume next that  $\dim\, D>0$  (the result is trivially true otherwise). Then below energy level $E$, the expected dimension of the moduli space $\M_{A, g, 0}(D)$ is 
\best
\dim_\cx \;\M_{A, g, 0}(D)&=&c_1(D)A+(\dim_\cx \, D-3)(1-g) 
\\
&=& c_1(X) A - DA+2g-3+ \dim_\cx\, D-(\dim_\cx\, D -1)g
\\
&\le& -\dim_\cx\, X+ \dim_\cx\, D-3-(\dim_\cx\, D -1)g\ <\ -1
\eest  
where for the first inequality we used  (\ref{st. ineq}) for any $J\in U$. The standard transversality argument at a somewhere injective map  \cite{ms2} implies that for generic $J$ in $U$  there are no simple smooth $J$-holomorphic  maps into $D$, and thus no multiple covers either, and this is true in a generic 1-dimensional family of $J$'s. Therefore the only $J$-holomorphic curves in $D$ below energy level $E$ are constants, and these have stable domains. Furthermore, in the relative moduli space 
$\M(\P_D, D_\infty\cup D_0)$ we see only multiple covers of the fiber (relative both $0$ and $\infty$) and these all have stable domain except for the trivial covers. Similarly, in  $\M(\P_D, D_\infty)$ we see only multiple covers of the fiber relative $\infty$ only, and these have stable domain unless $g=0$, $\ell(s)\ge 1$ and $n+\ell(s)\le 2$, i.e. are trivial components. 

 Next, by (\ref{st. ineq}) the expected dimension $d$ of the relative moduli space 
 $\M_{A, g,n, s}(X,D)$ is 
 \best
d &=&c_1(X) A + (\dim_\cx \,X-3)(1-g)+n+\ell(s)- D \cdot A
 \\
 &=&c_1(X) A -D\cdot A +2g-2+n+\ell(s) + (\dim_\cx \,X-1)(1-g)
 \\
 &\le &-\dim_\cx \,X +\ell(s)+n-3+(\dim_\cx\, X-1)(1-g)
 \eest
 If the domain is unstable, that is $g\le 1$ and $\ell(s)+n<3-2g$, this gives
 \best
\dim_\cx \;\M_{A, g, n,s}(X,D)<-1.
\eest 
Thus for generic $J\in U$ (even in a generic  path) there are no simple, smooth maps with unstable domain in $\M_s(X, D)$, and therefore there are no multiple covers  other than the constants (below energy level $E$).  Recalling the first part of the argument,  we conclude that for generic $J$ all maps in $\ov\M_s(X, D)$   have stable domains, and the same remains true in generic paths of $J$'s. This proves (b).

Next consider the general case when $V$ is a normal crossing divisor in $X$. Then the proof follows the same outline as above, except that now $V$ induces a stratification of $X$: the depth $k$ piece $V^k$ is where at least $k$ branches of $V$ meet (here $k\ge 0$ includes $V^0=X$); $V^k$ is smooth away from higher depth stratum and comes with a smooth resolution $\wt V^k \ra V^k$. Because we are now restricted to  
$J\in \J(X, V\cup D)$, to get transversality for the first part of the argument we need to work separately with simple maps into each stratum $V^k\cap D$ which are not contained in any higher depth stratum. Any such $J$-holomorphic map lifts to a map in the resolution $\wt V^k$, with image not contained in the higher depth stratum of $\wt V^k$. Furthermore, if $A$ denotes its homology class, then 
$c_1(\wt V^k\cap D)A=c_1(\wt V^k)A-D\cdot A$. The dimension count then shows that the only maps into $D$ below energy level $E$ are constants. For the second part of the argument, for each smooth, simple map $f\in\ov\M(X, V\cup D)$ consider its projection $\wh f$ to $\ov\M(X, V)$ under the map that collapses all the levels over $D$, but not those over $V$. There are two possibilities for this projection. If  $\wh f$ is a map into $D$, then by the first part of the argument it is constant; thus $f$ is a map into a fiber of $(\P_D, D_0\cup D_\infty)$; these all have stable domains except for the trivial maps. Otherwise 
$\wh f$ is a map into one of the strata $V^k$ that does not lie entirely in the higher depth stratum of 
$V\cup D$; then $\wh f$ has a lift to a map in a relative moduli space $\ov\M_s(\wt V^k, \wt {V^k\cap D})$, 
where $\wt {V^{k}\cap D}$ is the resolution of the lift of the divisor  $V^{k+1}\cap D$ to $\wt V^k$. 
The dimension of this moduli space is $c_1 (\wt V^k) A -D\cdot A+(\dim_\cx V^k-3)(1-g)+\ell(s)+n$ which is similarly negative by (\ref{st. ineq.V}) if the domain is unstable. 

Finally, note that (a) and (b)  in the statement of the Proposition are both open conditions, thus the subset 
$\wh U$  of $U$ on which they hold is open and dense set. Moreover, since $U$ is path connected and properties (a) and (b) hold for generic path in $U$  then $\wh U$ is also path connected. \end{proof}

\medskip

 The arguments  in the next section require a  controlled  existence statement for stabilizing divisors.   The following lemma shows that stabilizing divisors exist for a  dense set  of tame pairs $(J,\w)$ in the spaces $\J(X)$ and  $\J(X,V)$.   In the statement,   $(\om_0, J_0)$ is a compatible pair, but the general elements of the ball   $B_\ep$ are  {\em tame, but not necessarily compatible pairs}. 

\begin{lemma}
\label{exist.Don.div}
Fix an energy level $E$ and a compatible structure $(J_0, \om_0)$ on $X$  and let $B_\ep$  denote  $C^0$-ball around $(J_0,\w_0)$ as above.   Then there exists an $\ep_0>0$ such that, for any $0<\ep<\ep_0$, 
\begin{enumerate}[(a)]
\item  There exists an $E$-stabilizing divisor $D$ for $X$ on $B_\ep\subset \J(X)$.
\item If $V$ is a normal crossing divisor for $(J_0,  \om_0)$, there exists an $E$-stabilizing divisor $D$ for $(X, V)$ on  $B_\ep \cap \J(X,V)$.
\item   Given $(J_1, \w_0), (J_2, \w_0) \in B_\ep$ and   normal crossing divisors $V_1, V_2$ in $(X, J_i, \w_0)$, there is a path $D_t$ of  $E$-stabilizing divisors for $X$ on $B_\ep$ whose endpoints $D_i$ are $E$-stabilizing  divisors for $(X, V_i)$ on $B_\ep\cap \J(X, V_i)$. 
\end{enumerate} 
\end{lemma}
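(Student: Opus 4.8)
The plan is to use Donaldson's theorem as the source of the divisors and then to carefully promote the existence statement to the relative and family settings, exactly as in Cieliebak--Mohnke \cite{cm} and Siebert \cite{siebert}. First I would recall that Donaldson's construction produces, for $k \gg 0$, a symplectic submanifold $D \subset X$ Poincar\'e dual to $k[\om_0]$ that is $J$-holomorphic for some almost complex structure compatible with $\om_0$ and $C^0$-close to $J_0$ (this is where $\ep_0$ comes from: the approximately-holomorphic machinery produces a $J'$ within a prescribed $C^0$-distance of $J_0$, so for $k$ large $J' \in B_\ep$). Since $D$ is Poincar\'e dual to $k[\om_0]$, we have $D \cdot A = k\,\om_0(A)$ for every $A \in H_2(X)$. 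The positivity inequality (\ref{st. ineq.V}) then becomes $k\,\om_0(A) \ge c_1(\wt V^k)A + \dim_\cx V^k + 2g + 1$ for all $(A,g,k) \in \C_{E,B_\ep}$; because that collection is finite (it parametrizes only finitely many topological types below energy $E$, using the upper bounds $\om(A) \le E$ and $3g-3 \le E$ and Gromov compactness) and $\om_0(A) > 0$ on each of them, there is a uniform $k$ making all of these inequalities hold. Choosing $D$ for that $k$ proves (a).

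For (b), I would invoke the relative version of Donaldson's theorem (Auroux, and used in \cite{cm}): given a normal crossing divisor $V$ that is $J_0$-holomorphic, for $k \gg 0$ there is a Donaldson divisor $D$ dual to $k[\om_0]$ that is \emph{transverse to $V$} and simultaneously holomorphic for a pair $(J',\om_0) \in B_\ep \cap \J(X,V)$, i.e. $J'$ is $V$-adapted. The transversality to $V$ and the $V$-compatibility of $J'$ are exactly clauses (i) of the definition of an $E$-stabilizing divisor for $(X,V)$, and the numerical clause (ii) follows by the same finiteness-plus-large-$k$ argument as in part (a), now using $c_1(\wt V^k)$ in place of $c_1(X)$ and noting that $\C_{E, B_\ep \cap \J(X,V)} \subset \C_{E, B_\ep}$ so nothing new enters. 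Here I would be slightly careful that the resolution notation $\wt V^k$ and the pairing $c_1(\wt V^k)A$ are the ones from \cite{i-nor}, and that the lift of a map into $V^k$ to $\wt V^k$ does not increase the relevant Chern number beyond what appears in (\ref{st. ineq.V}); this is the bookkeeping already set up in the proof of Proposition~\ref{M.s-stable.V}.

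For (c) — the path statement — the idea is that Donaldson's construction depends continuously (indeed smoothly, after perturbation) on the input data. Given $(J_1,\om_0),(J_2,\om_0) \in B_\ep$ and normal crossing divisors $V_1, V_2$, join $J_1$ to $J_2$ by a short path $\{J_t\}$ inside $B_\ep$ (staying inside the $C^0$-ball, which is convex). Running Donaldson's theorem with parameters ($V_1$-adapted at the $t=0$ end, no divisor constraint in the interior, $V_2$-adapted at the $t=1$ end) and a sufficiently large common $k$ yields a family $D_t$ of symplectic submanifolds dual to $k[\om_0]$, each $J'_t$-holomorphic for some $J'_t \in B_\ep$, with $D_0$ transverse to $V_1$ and $J'_0$ being $V_1$-adapted and $D_1$ transverse to $V_2$ and $J'_1$ being $V_2$-adapted; shrinking $\ep_0$ if necessary keeps all the $J'_t$ in $B_\ep$. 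The numerical positivity holds uniformly along the path for the same finiteness reason, so every $D_t$ is an $E$-stabilizing divisor for $X$ on $B_\ep$ and the endpoints are $E$-stabilizing for $(X,V_i)$ on $B_\ep \cap \J(X,V_i)$.

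The main obstacle is the relative and one-parameter versions of Donaldson's theorem in clauses (b) and (c): one needs not just an \emph{approximately} holomorphic submanifold but one that is genuinely holomorphic for some $V$-adapted (respectively one-parameter family of) almost complex structures lying in the prescribed $C^0$-ball, \emph{and} transverse to the prescribed normal crossing divisor. These refinements are due to Auroux and Cieliebak--Mohnke; I would state them as black boxes and cite \cite{cm} (and \cite{siebert}) for the absolute and relative cases and for the fact that the constructions can be carried out in families and depend continuously on the data. Everything else — the finiteness of $\C_{E, \cdot}$, the large-$k$ quantifier, the identification of $D\cdot A$ with $k\om_0(A)$, and the matching-up of the definition's clauses — is routine once those theorems are in hand.
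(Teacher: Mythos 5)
Your overall strategy is the paper's: Donaldson--Auroux divisors dual to a large multiple of the symplectic class, finiteness of $\C_{E,B}$ (via Gromov compactness, made precise in Lemma \ref{L.top.not.jumps}) to force the positivity inequality (\ref{st. ineq.V}) for large degree, and relative/one-parameter refinements at the endpoints. But there are two genuine gaps. First, you take $D$ Poincar\'e dual to $k[\om_0]$ and compute $D\cdot A = k\,\om_0(A)$; this step fails when $[\om_0]$ is not a rational class, since then no multiple of $[\om_0]$ is integral and Donaldson's theorem does not produce any such divisor. The paper first replaces $(J_0,\om_0)$ by a nearby compatible pair $(J',\om')$ with $[\om']$ rational (inside $B_{\ep/4}$, resp.\ $B_{\ep/2}$, with $J'$ chosen $V$-adapted in part (b) and with the same $\om'$ at both endpoints in part (c)), applies Donaldson--Auroux to $\om'$, and then uses the uniform lower bound $\om'(A)\ge \al_0$ on the finite set $\C_{E,B}$ coming from (\ref{8.wAlowerbound}). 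Your finiteness argument survives this substitution, but the substitution itself is a necessary missing step.

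Second, Donaldson's construction only gives a divisor that is approximately ($C_\ep m^{-1/2}$-) $J'$-holomorphic, and $\eta$-transverse to $V$; to satisfy clause (i) of the definition of an $E$-stabilizing divisor you must then produce an honest almost complex structure in $B_\ep$ for which $D$ is genuinely holomorphic, which is $V\cup D$-adapted at the endpoints in the sense of \cite{IP1}, \cite{i-nor}, and which exists along the whole path in (c). You delegate this to \cite{cm} and \cite{siebert} as a black box, but in the form needed here (normal crossing $V$, $V\cup D$-compatibility including the 1-jet condition, and one-parameter families) this is exactly what the paper proves in its Appendix (Proposition \ref{0-jetlemma} and Corollary \ref{0-jetlemma.D}), precisely because it is not available in the literature; at minimum you should state the needed lemma and sketch why $\eta$-transversality plus approximate holomorphicity yields a $C_\eta\ep$-close adapted $J$. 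Note also that this construction only yields a $J$ \emph{tamed} by $\om_0$, not compatible as you assert (harmless, but the claim is too strong), and that your path argument in (c) should not appeal to convexity of the $C^0$-ball --- convex combinations of almost complex structures are not almost complex structures; the paper instead joins the endpoints by a path of $\om'$-compatible pairs $(\om',J'_t)$ in $B_{\ep/4}$ and runs the one-parameter Donaldson construction along that path.
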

\begin{proof}
All three statements follow from Donaldson's Theorems \cite{d1} \cite{d2} and its extensions by Auroux \cite{a}; related statements were also proved in \cite{cm}. 
We include here an outline of the proof for the specific form stated above.

First, by Lemma \ref{L.top.not.jumps}  there exits an $\ep_0>0$ such that the $C^0$ closed ball $B_0$ centered at $(J_0,\om_0)$  and of radius $\ep_0$ satisfies  $\cal C_{B, E}=\cal C_{J_0, E}$.  Furthermore, there is a lower bound $\w_0(A)\ge 2\alpha_0>0$ for all $(A,g,k)\in\C_{J_0,E}$  Thus after shrinking $\ep_0$ we may assume that
\bear
\label{8.wAlowerbound}
\om(A)\ge \al_0 >0
\eear
for all $(\om, J)\in B_0$ and $(A, g,k)$ in the finite set $\cal C_{B, E}$.

Now fix a $C^0$ ball $B=B_\ep$ about the compatible pair $(\om_0, J_0)$ with $\ep<\ep_0$, sufficiently small so that all $J\in B$ are tamed by all the $\omega$'s in $B$ (being a tamed pair is an open condition).  Choose a compatible pair $(J', \w')$ in a ball $B'=B_{\ep/4}$ with $\om'$ representing a rational cohomology class.  Donaldson's Theorem implies that there exists  constants $m_\ep, C_\ep$ such that for any $m\ge m_\ep$, there exists a smooth divisor $D_m$ representing $m\om'$ and which is $C_\ep m^{-1/2}$-$J'$-holomorphic.  Hence for sufficiently large $m$:
\begin{itemize}
\item there exists an almost complex structure $J_m$ on $(X, D_m)$ such that $|J_m-J'|_{C^0}<\ep/4$ (see Appendix)  and therefore $J_m\in B\cap \J(X, D_m)$. 
\item    $D_m$ satisfies  (\ref{st. ineq}) on $B$ because $D_m\cdot A= m\w'(A)\ge m \alpha_0$    by (\ref{8.wAlowerbound}).
\end{itemize}
Therefore $D_m$ is an $E$-stabilizing divisor for $X$ on $B$, as statement (a) asserts.

Next, given a  normal crossing divisor $V$ in $(X,\om_0, J_0)$, similarly we can find a compatible pair $(J', \w')$ in  $B'=B_{\ep/2}$ such that  $\w'$ is  rational  and $J'$ is adapted to $V$ (so  $(\om', J')\in \J(X, V)$). For any $\eta>0$ small similarly by the results of Donaldson and Auroux,  there exists constants $C_\ep$, $m_\ep$ (which depend also on $\eta$) such that for each $m\ge m_\ep$ there is a smooth divisor $D_m$  with the following properties: (i) $D_m$ is Poincar\'{e} dual to  $m\om'$ (ii) $D_m$ is $  C_\ep m^{-1/2}$-$J'$ holomorphic  and (iii) $D_m$ is $\eta$-transverse to every stratum of $V$.  As above,  $D_m$ satisfies  (\ref{st. ineq}) on $B$ for all large $m$.   Then we can construct  a $J_m$ within $\ep/2$ in $C^0$ from $J_0$ (cf Appendix)  such that $J_m$ is $D_m\cup V$ adapted. Therefore $D_m$ is an $E$-stabilizing divisor for $(X, V)$ on  $B_\ep \cap \J(X,V)$. Note that the construction in the Appendix gives rise to a $J_m$ which is tamed by $\omega_0$  for $\ep$ small but perhaps not compatible. 

\medskip

The proof of (c) is similar.  For  $i=1, 2$ we can similarly  find $V_i$-adapted, compatible pairs $(\om', J'_{i})$ with the same rational $\om'$,  and  a path $(\w', J'_{t})$ of compatible  pairs in $B_{\ep/4}$ between them. Applying Donaldson's Theorem to the path $(\w', J'_{t})$ shows that, for each $\eta>0$ small enough, and each large $m$, there is  a family $D_{t, m}$ of smooth divisors with the following properties: (i) they are Poincare dual to $m\om'$ (ii) $D_{t, m}$ are $C_\ep m^{-1/2}$-$J_t'$-holomorphic and (iii) at the endpoints $D_{i, m}$ are $\eta$-transverse to $V_i$ for $i=1, 2$.  Again,   (\ref{8.wAlowerbound}) also implies that (iv) $D_{t, m}$ satisfy  (\ref{st. ineq}) on $B$ for all 
$0\le t\le 1$. Then the construction in the Appendix produces a perturbed path $J''_t$ of almost complex structures on $X$ still within $\ep/2$ of $J_0$ (thus tamed by $\om_0$) such that $J''_t$ is adapted to $D_{t,m}$ for all $t$ and moreover at the endpoints $J''_i$ is adapted to 
$V_i\cup D_{i, m}$. Therefore $D_{t, m}$ is a path of $E$-stabilizing divisors with the properties stated in (c). 
\end{proof}

\begin{rem} 
By first deforming $\om$ to a rational form and then using Auroux's generalization \cite{au2} of 
Donaldson's theorem,  we can similarly  find stabilizing divisors representing the Poincare dual of $\tau+k\om$ for any $\tau\in H^2(X, \Z)$ and $k>k_\tau$ sufficiently large. The case $\tau= c_1(TX)$ is useful in certain applications.
\end{rem}

\vspace{1cm}


\setcounter{equation}{0}
\section{The stabilization procedure} 
\label{section9}
\medskip

When $D$ is an $E$-stabilizing divisor for a normal crossing divisor $(X, V)$ it is useful to define a space of ``weakly compatible'' almost complex structures: we let $\J^E_D(X, V)$ denote the subspace of $\J(X,V)$ consisting of all $J$ such that
\bear
\label{defJVVD}
\hspace*{.3in}\begin{minipage}{5.5in}
\begin{enumerate}[(a)]
\item $D$ is $J$-complex, and
\item all $J$-holomorphic maps $f$ into $D$ with $E(f)\le E$ and $g\le E$ are constant.
\end{enumerate}
\end{minipage}
\eear
Here (b) weakens part of the definition of $V\cup D$-compatibility, the 1-jet condition on $J$ along $D$ in  Definition~3.2 of  \cite{IP1}. The 1-jet condition was  used only to prove that the linearized normal operator $D_f^N$ is complex linear (Lemma~3.3 of \cite{IP1}). But conditions (a) and (b) above  also imply complex linearity because $D_f^N$ is complex linear for constant maps.  Thus all results in \cite{IP1} hold for weakly compatible $J$. 

\medskip

For the remainder of this section,  fix  a normal crossing divisor $V$ in $X$ and a compatible pair $(\om_0,J_0)$ in $\J(X,V)$.  Then  fix topological data $(A,g)$ and an energy level $E$ with $E(A,g)<E$.  We will work under the hypothesis of Theorem~\ref{VFCfinecase}, namely
\begin{assumption}
\label{dim assumption}  Let $V$ be a normal crossing divisor on $X$. 
Assume that  $A\not= 0$ and that either $g\le 1$, or $X$ and every strata of the normal crossing divisor $V$ (if non-empty) has dimension at least 12.
\end{assumption}
\noindent Finally,  fix a $C^0$ ball  $B=B_\ep \subset  \J(X,V)$ centered at $J_0$ and small enough that Lemma~\ref{exist.Don.div} applies.  

\smallskip
Let $D$ be an $E$-stabilizing divisor for $(X,V)$ on $B$, which exists by Lemma~\ref{exist.Don.div}b.   The proof of Theorem \ref{7.Maintheorem}  combines with the discussion above to give the following:
\begin{lemma} 
Assume $D$ be an $E$-stabilizing divisor for $(X,V)$ on $B$, and $E(A, g)\le E$. Then for each $J\in  \J_D(X,V) \cap B$ there exists a virtual fundamental cycle
\bear\label{vfc.VD}
[ \ov\M_{A, g, n,s}^J(X,V\cup D)] ^{vir}  \in \cHH_*( \ov\M_{A, g, n,s}^J(X, V\cup D); \Q)
\eear
that is independent of $J$ in the sense that,  for any path $\gamma$ between  $J_0, J_1\in \J_D(X,V)\cap B$, 
the image under the inclusions are equal: 
\bear\label{vfc.VD.ind}
[ \ov\M_{A, g, n,s}^{J_0}(X,V\cup D)] ^{vir} = [ \ov\M_{A, g, n,s}^{J_1}(X,V\cup D)] ^{vir} 
 \in \cHH_*( \ov\M_{A, g, n,s}^\gamma(X, V\cup D); \Q)
\eear
\end{lemma}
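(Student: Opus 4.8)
The plan is to reduce this lemma to Theorem~\ref{7.Maintheorem} by observing that $V\cup D$ is itself a normal crossing divisor in $X$ and that the set $\J_D(X,V)\cap B$ is a path-connected subset of $\J(X, V\cup D)$. First I would note that, by definition of an $E$-stabilizing divisor, $D$ is transverse to $V$ and $J'$-holomorphic for some $(\w, J')$ in $B$, so $V\cup D$ is a normal crossing divisor in the sense of Section~\ref{section7}, and the relative moduli spaces $\ov\M_{A,g,n,s}^J(X, V\cup D)$ together with all the structure of that section (topology, $G$-twists, Gromov-type perturbations, resolutions) are available. The key point is that the notion of ``$J$-complex divisor'' appearing in the definition of $\J_D(X,V)$ together with condition (b) of (\ref{defJVVD}) gives exactly the ``weakly compatible'' structures discussed at the start of this section, and the remark there records that all results of \cite{IP2} --- hence in particular Lemma~\ref{7.Ruan-Tianlemma} and Theorem~\ref{7.Maintheorem} --- continue to hold over $\J^E_D(X, V\cup D)$, not merely over the strictly $(V\cup D)$-compatible locus.

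Second, I would check that $\J_D(X,V)\cap B \subset \J^E_{D}(X,V\cup D)$. Indeed $D$ is $J$-complex for every $J$ in this set by construction of $\J_D$, and $V$ is $J$-complex because we are inside $\J(X,V)$; condition (b) of (\ref{defJVVD}) --- that all $J$-holomorphic maps into $D$ with $E(f)\le E$, $g\le E$ are constant --- is precisely conclusion (a) of Proposition~\ref{M.s-stable.V}, which holds on the open dense path-connected subset $\wh U$ of a $C^0$-ball $U$ around $J_0$; shrinking $B$ if necessary we may take $B\subset U$ and restrict attention to $\wh U$. Thus the relative moduli spaces $\ov\M_{A,g,n,s}^J(X,V\cup D)$ with $E(A,g)\le E$ are domain-stable over $\wh U$ by Proposition~\ref{M.s-stable.V}(b), and domain-stability is the hypothesis under which Theorem~\ref{7.Maintheorem} produces a VFC.

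Third, with those identifications in place the construction of (\ref{vfc.VD}) is just Theorem~\ref{7.Maintheorem} applied to the pair $(X, V\cup D)$ over the domain-stable locus: for each $J\in \J_D(X,V)\cap B$ (inside $\wh U$) one gets
$$
[\ov\M_{A, g, n,s}^J(X,V\cup D)]^{vir}=\frac{1}{|\Mu_s||G|}\,\pi_*[\ov\M_{A, g, n}^{G,s, J}(X,V\cup D)]^{vir}\in \cHH_*(\ov\M_{A, g, n,s}^J(X, V\cup D); \Q),
$$
where $G$ is chosen with property (f) of Example~\ref{Ex.G} so that the twisted moduli space is domain-fine, and Lemma~\ref{7.Ruan-Tianlemma} plus the continuity construction of Theorem~\ref{5.CechcycleLemma} give the class. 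The path-independence statement (\ref{vfc.VD.ind}) follows from the corresponding assertion in Theorem~\ref{7.Maintheorem}: any two $J_0,J_1\in \J_D(X,V)\cap B$ can be joined by a path $\gamma$ inside $\wh U\subset \J_D(X,V)\cap B$ (here I use that $\wh U$ is path-connected, by Proposition~\ref{M.s-stable.V}), along which domain-stability and weak compatibility persist --- indeed Proposition~\ref{M.s-stable.V} guarantees (a) and (b) hold in generic one-parameter families --- so the images of the two VFCs under the inclusions $\ov\M^{J_i}\hookrightarrow\ov\M^{\gamma}$ agree.

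I expect the main obstacle to be the bookkeeping in the first step: verifying carefully that ``weak $(V\cup D)$-compatibility'' in the sense of (\ref{defJVVD}) is genuinely sufficient to run the analysis of \cite{IP1}, \cite{i-nor} and \cite{IP2} --- in particular that the normal linearized operator $D_f^N$ is still complex-linear along $D$ for the maps that actually occur (which, by condition (b), are constant on $D$), so that the gluing theorem and the local model (\ref{la=mu}) of the relative moduli space hold verbatim. Once that is granted, everything else is a direct invocation of Theorem~\ref{7.Maintheorem} and Proposition~\ref{M.s-stable.V}, with no new estimates required.
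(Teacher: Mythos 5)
Your reduction to Proposition~\ref{M.s-stable.V} and Theorem~\ref{7.Maintheorem} is exactly the paper's route, and your verification that weak $(V\cup D)$-compatibility (\ref{defJVVD}) suffices on the generic locus is fine. But there is a genuine gap: the lemma asserts the existence of $[\ov\M_{A,g,n,s}^J(X,V\cup D)]^{vir}$ for \emph{every} $J\in\J_D(X,V)\cap B$, and equality of images for \emph{any} path between two such $J$'s. Proposition~\ref{M.s-stable.V} only gives you conditions (a) and (b) — hence weak compatibility, domain-stability, and applicability of Theorem~\ref{7.Maintheorem} — on an open, dense, path-connected subset $\wh U$ of $\J_D(X,V)\cap B$. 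For a non-generic $J$ in $\J_D(X,V)\cap B$ the moduli space $\ov\M^J(X,V\cup D)$ need not be domain-stable and $J$ need not satisfy (\ref{defJVVD})(b), so Theorem~\ref{7.Maintheorem} does not apply there directly. Your phrases ``shrinking $B$ if necessary \dots restrict attention to $\wh U$'' and ``join $J_0,J_1$ by a path inside $\wh U$'' silently replace the statement by a weaker one: the endpoints $J_0,J_1$ and the given path $\gamma$ are arbitrary in $\J_D(X,V)\cap B$, not in $\wh U$.

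What is missing is the continuity-extension step, which is the actual content the paper adds at this point: having the VFC on $\wh U$ (with path-independence there), one extends it to all of $\J_D(X,V)\cap B$ by the limiting construction of Theorem~\ref{5.CechcycleLemma} — choose generic $J_k\in\wh U$ converging to $J$, join them by regular paths, and use the continuity axiom (\ref{cech.cont}) of rational \v{C}ech homology to define $[\ov\M^J]^{vir}$ as the inverse limit of the classes in $\cHH_*(\ov\M^{K_m})$; independence of the choices is the ladder argument. The statement (\ref{vfc.VD.ind}) for an arbitrary path $\gamma$ then follows because paths in $\wh U$ are dense among paths in $\J_D(X,V)\cap B$ ($\wh U$ being open, dense and path-connected), so $\gamma$ can be approximated by paths over which the equality is already known. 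Without this step your argument only produces the class at generic $J$, which is not what the lemma claims.
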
 
\begin{proof} Applying  Proposition~\ref{M.s-stable.V}  at each point of the non-empty path-connected set $ \J_D(X,V) \cap B$ and taking the union shows  that there is an open, dense path-connected   subset  $\hat{U}$ of  $ \J_D(X,V) \cap B$  so that 
\begin{enumerate}[(i)]
\item  $\hat{U}\subset  \J(X,V;  D)$ and 
\item  $\ov\M_{A,g,n, s}(X,V\cup D)$ is domain-stable for all $n, s$ and all  $J\in \hat{U}$.
\end{enumerate}
Theorem \ref{7.Maintheorem}  then gives a well-defined VFC 
on $\wh U$. This then extends by continuity to the entire $  \J_D(X,V) \cap B$ with the fact that $\wh U$ is open, dense and path connected subset of $\J_D(X, V)\cap B$, and that any path in $\J_D(X, V)\cap B$ can be approximated by a path in $\wh U$ (the set of paths in $\wh U$ are dense in the set of all paths in $\J_D(X, V)\cap B$). 
\end{proof}
Now consider the component of the moduli space  in (\ref{vfc.VD}) with $s'=s\cup [1]$, that is,  the compactification of the space of stable maps that intersect $V$ with multiplicity vector $s$ and intersect $D$ at $A\cdot D$ unordered points each with multiplicity 1.  As in Diagrams~(\ref{3.naturalmaps}) and (\ref{7.pi.to.X})  there is a projection
\bear
\label{9.projection}
\xymatrix{
\oM_{A, g,n, s'}(X, V\cup D) \ar[d]^{\phi_D}  \\
 \oM_{A, g,n,s}(X,V)
 }
\eear
where  $\phi_D$ is the composition of the map in diagram (\ref{7.Vdirectedsystem}) that forgets the branch $D$ with the map that also forgets the marking of the contact points to $D$.

\begin{defn} 
\label{9.defnVFC.1} Assume $(X, V)$ satisfies Assumption \ref{dim assumption}. 
If $D$ is any $E$-stabilizing divisor for $(X,V)$ on $B$, for any $J\in \J_D(X,V) \cap B$  the image of the homology class (\ref{vfc.VD}) under $\phi_D$ defines a class
\best
VFC^J_D\;\ma=^{def} \;\phi_{D*}[ \ov\M_{A, g, n,s\cup [1]}^J(X,V\cup D)] ^{vir}  \in \cHH_*( \ov\M_{A, g, n,s}^J(X, V); \Q)
\eest
\end{defn}
\non Here $[1]$ means that we  first divide by the symmetric group action reordering the contact points to $D$ as described in Remark \ref{R.symMS}.  

As defined $VFC^J_D$ depends on the choice of $D$ (as well as $V$ and ${A,g,s}$ which are assumed fixed in the discussion below).  The remainder of this section is devoted to showing that $VFC^J_D$ is independent of $D$ in an appropriate sense.

\medskip

The next step is to prove that these virtual fundamental classes are consistent for different choices of $D$.  We start with a proposition whose proof is based on   a separate, independent result proved in Section~11. 
 
\begin{prop}
\label{9.differentD}
If  $D$ and $D'$ are  $E$-stabilizing divisors for $V$ on $B$  then for any $J\in B\cap \J_{D\cup D'}(X,V)$  (if one exists)
\bear\label{VFC.D=VFC.D'}
VFC_D^J=VFC_{D'}^{J} \in \cHH_*(\oM^{J}_{A, g,n, s}(X, V), \Q).
\eear
\end{prop}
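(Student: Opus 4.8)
The plan is to compare the two classes $VFC_D^J$ and $VFC_{D'}^J$ by introducing a third divisor that "dominates" both and running a double application of Axiom~A4 (equivalently, of the degree relation for $\phi_D$ established via Theorem~\ref{7.Maintheorem} and Proposition~\ref{M.s-stable.V}). Concretely, given $E$-stabilizing divisors $D$ and $D'$ for $(X,V)$ on $B$, I would first invoke Lemma~\ref{exist.Don.div}(b),(c) to produce a further $E$-stabilizing divisor $\wh D$ for the pair $(X, V\cup D)$ \emph{and simultaneously} for $(X, V\cup D')$, on a possibly shrunken $C^0$-ball, chosen so that $\wh D$ is transverse to both $D$ and $D'$ and so that the relevant space $B\cap\J_{D\cup D'\cup\wh D}(X,V)$ (or a path therein) is non-empty; this is where the flexibility in Donaldson's theorem is essential. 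Then $VFC^J_{\wh D}$ computed relative to $(X,V\cup D)$ and relative to $(X, V\cup D')$ should, by the forgetful-map diagram (\ref{7.Vdirectedsystem}) and the commutativity of forgetting $D$ then $\wh D$ versus forgetting $\wh D$ then $D$, both push forward to the same class on $\oM^J_{A,g,n,s}(X,V)$.

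The key steps, in order, are: (1) Use Lemma~\ref{exist.Don.div} to build $\wh D$ with the transversality and holomorphicity properties needed so that $V\cup D\cup\wh D$ and $V\cup D'\cup\wh D$ are both normal crossing, and so there is a common path-connected parameter region. (2) Apply the already-constructed relative VFC of Theorem~\ref{7.Maintheorem} to the moduli spaces $\ov\M^J_{A,g,n,s\cup[1]\cup[1]}(X, V\cup D\cup\wh D)$, with the two extra multiplicity-one markings recording contact with $D$ and with $\wh D$. (3) Forget $\wh D$: since $\wh D$ is $E$-stabilizing for $(X,V\cup D)$, Proposition~\ref{M.s-stable.V} guarantees domain-stability and the degree-$\ell(s'')!$ property of $\phi_{\wh D}$, so the pushforward of this VFC under $\phi_{\wh D}$ (after dividing by the symmetric group on the $\wh D$-points, per Remark~\ref{R.symMS}) equals $[\ov\M^J_{A,g,n,s\cup[1]}(X,V\cup D)]^{vir}$. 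Pushing that forward under $\phi_D$ gives $VFC^J_D$. (4) Symmetrically, forgetting $D$ first and then $\wh D$ gives $VFC^J_{\wh D}$ (computed relative to $(X,V)$) — here one uses that $\wh D$ is $E$-stabilizing for $(X,V)$ as well, which can be arranged. (5) Since the composite forgetful map $\ov\M(X,V\cup D\cup\wh D)\to\ov\M(X,V)$ is independent of the order in which $D$ and $\wh D$ are forgotten — this is a formal property of the directed system (\ref{7.Vdirectedsystem}) — the two computations agree, so $VFC^J_D = VFC^J_{\wh D} = VFC^J_{D'}$.

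There is one honest subtlety: the statement says "which is proved in a separate, independent Section~11," so presumably the clean statement I want is that $VFC^J_{\wh D}$ relative to $(X,V)$ equals $VFC^J_D$ relative to $(X,V)$ whenever $\wh D$ is stabilizing for $(X,V\cup D)$, and this commutativity-of-forgetting lemma is the content extracted to Section~11. Assuming that lemma, the present Proposition is the short corollary obtained by applying it twice, once with the pair $(D,\wh D)$ over $(X,V)$ and once with $(D',\wh D)$. I would therefore phrase the proof as: choose $\wh D$ as above via Lemma~\ref{exist.Don.div}(c) (to get the common path and the joint transversality), then invoke the Section~11 result to get $VFC^J_D = VFC^J_{\wh D} = VFC^J_{D'}$, noting that the hypothesis $J\in B\cap\J_{D\cup D'}(X,V)$ ensures all the classes in question are defined and that continuity (Theorem~\ref{5.CechcycleLemma}) lets us pass from the generic $J$ where everything is a manifold to the given $J$.

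The main obstacle I anticipate is step (1): arranging a \emph{single} divisor $\wh D$ (or a path of divisors) that is simultaneously $E$-stabilizing for $(X, V\cup D)$, for $(X, V\cup D')$, and for $(X,V)$, with all the required normal-crossing transversalities among $V$, $D$, $D'$, $\wh D$, and with a non-empty path-connected region of weakly compatible $J$'s containing the given $J$ (or approximating a path through it). This is exactly the kind of controlled existence statement Lemma~\ref{exist.Don.div} was set up to provide, but threading its quantifiers — choosing $m$ large enough relative to the \emph{enlarged} finite set $\C_{E,B}$ coming from the bigger divisor configuration, while keeping everything inside the fixed $C^0$-ball $B$ — is the delicate bookkeeping. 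Everything after that is a formal diagram chase plus the degree property of the forgetful maps, both of which are already in hand.
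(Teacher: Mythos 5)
Your plan takes a detour that the hypothesis of the Proposition is designed to make unnecessary, and the detour introduces a real gap. The paper's proof uses no auxiliary divisor at all: since the given $J$ lies in $B\cap \J_{D\cup D'}(X,V)$, the configuration $V\cup D\cup D'$ is already $J$-holomorphic, so (by a slight modification of Proposition~\ref{M.s-stable.V}) there is an open, dense, path-connected subset $\wh U$ of $B\cap \J_{D\cup D'}(X,V)$ on which the moduli spaces relative $V\cup D$ and relative $V\cup D'$ are domain-stable and all $J$-holomorphic maps into $D$ or $D'$ below energy $E(A,g)$ are constant. On $\wh U$ one applies Theorem~\ref{P.relV=relVD} twice, with $D'$ playing the role of the extra stabilizing divisor for the pair $(X,V\cup D)$ and $D$ playing that role for $(X,V\cup D')$, giving the chain $VFC_D^J=VFC_{D\cup D'}^J=VFC_{D'}^J$ directly; one then extends to every $J\in B\cap\J_{D\cup D'}(X,V)$ by the continuity (inverse-limit) argument, which works precisely because the equality is already known at parameters inside the same space $B\cap\J_{D\cup D'}(X,V)$ converging to $J$. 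Note also that the content of Section~11 is not a commutativity-of-forgetting lemma but the degeneration statement $[\ov\M(X,V)]^{vir}=\phi_{D*}[\ov\M(X,V\cup D)]^{vir}$ of Theorem~\ref{P.relV=relVD}; your guessed statement is a formal consequence of it, but it is the intermediary $VFC_{D\cup D'}$, not a third divisor, that makes the two applications mesh here.

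The concrete problem with your step (1) is that the class $VFC^J_{\wh D}$ is only defined (Definition~\ref{9.defnVFC.1}) for $J\in\J_{\wh D}(X,V)\cap B$, i.e.\ when $\wh D$ is exactly $J$-holomorphic for the given $J$. Donaldson's theorem produces divisors that are only approximately $J$-holomorphic; making $\wh D$ genuinely holomorphic forces you to replace $J$ by a nearby $J''$ (Proposition~\ref{0-jetlemma}), and then your chain $VFC_D=VFC_{\wh D}=VFC_{D'}$ holds at $J''$, not at $J$. Transporting back to $J$ along a path only yields equality after inclusion into $\cHH_*$ of the moduli space over that path, which is the weaker statement of Theorem~\ref{cor.D=D'}, not the pointwise equality in $\cHH_*(\oM^J_{A,g,n,s}(X,V);\Q)$ asserted here; the inclusion of $\oM^J$ into the moduli space over a path need not be injective on Cech homology. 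One could try to repair this with a sequence of auxiliary divisors $\wh D_k$ and adapted parameters $J_k\to J$ in $C^0$ and then rerun the inverse-limit argument, but that is exactly the extra bookkeeping you flag as the ``main obstacle,'' and it is not supplied; the paper avoids it entirely because, for this Proposition, the pair $D,D'$ itself already provides the common $J$-holomorphic configuration. The third-divisor (in fact, path-of-divisors) mechanism belongs to the next step, Theorem~\ref{cor.D=D'}, where no common $J$ exists.
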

\begin{proof} By a slight modification of Proposition~8.2, there is an open dense and path connected subset $\wh U$ of $B\cap \J_{D\cup D'}(X,V)$ on which 
\begin{enumerate}[(a)]
\item  the moduli spaces 
$\ov\M^{J}_{A, g, n, s\cup [1]}(X, V\cup D)$, 
$\ov\M^{J}_{A, g, n, s\cup [1]}(X, V\cup D')$ and $\ov\M^{J}_{A, g, n, s\cup [1]}(X, V\cup D\cup D')$ are all  domain-stable and 
\item  there are no  $J$-holomorphic maps into $D$ or $D'$ other than constants with energy less or equal to $E(A, g)$. 
\end{enumerate}
Proposition~\ref{11.2}  below  shows that these conditions, together with Assumption~\ref{dim assumption}, imply that 
$$
VFC_D^J= VFC_{D\cup D'}^J= VFC_{D'}^{J}
$$
 for all $J\in \wh U$. This equality extends by continuity to all $J\in B\cap \J_{D\cup D'}(X,V)$.
\end{proof}
\begin{theorem}\label{cor.D=D'} Assume $D$ and $D'$ are $E$-stabilizing divisors for $(X, V)$ on $B$. Then for any $J\in B\cap \J_D(X, V)$ and $J'\in B\cap \J_{D'}(X, V)$ there exists a path 
$\gamma$ in $B\cap \J(X, V)$ from $J$ to $J'$ such that
\best
VFC_{D}^J= VFC_{D'}^{J'} \in  \cHH_*(\oM^{\gamma}_{A, g, n, s}(X, V), \Q)
\eest
after inclusion. 
\end{theorem}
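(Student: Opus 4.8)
The plan is to connect the two situations by passing through a single space of almost complex structures adapted to both divisors simultaneously, using Lemma~\ref{exist.Don.div}(c) to produce the required path of divisors. The key point is that Proposition~\ref{9.differentD} already settles the case where a common $J$ exists (i.e. $B\cap\J_{D\cup D'}(X,V)\ne\emptyset$); the work here is to remove that hypothesis by interpolating.

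First I would apply Lemma~\ref{exist.Don.div}(c) to the pair $(J,\omega_0)$, $(J',\omega_0)\in B$ together with the normal crossing divisors $V\cup D$ and $V\cup D'$ in $X$: this produces a path $D_t$ ($t\in[0,1]$) of $E$-stabilizing divisors for $X$ on $B$ with $D_0=D$, $D_1=D'$, such that $D_0$ is $E$-stabilizing for $(X,V\cup D)$ and $D_1$ is $E$-stabilizing for $(X,V\cup D')$ on the corresponding subspaces. Next, choose a continuous family $J_t\in B\cap\J_{D_t}(X,V)$ with $J_0$ close to $J$ and $J_1$ close to $J'$ (this uses that $\J_{D_t}(X,V)$ varies continuously and is nonempty, a consequence of the Appendix construction invoked in Lemma~\ref{exist.Don.div}); then connect $J$ to $J_0$ inside $B\cap\J_D(X,V)$ and $J'$ to $J_1$ inside $B\cap\J_{D'}(X,V)$ by short paths, and by \eqref{vfc.VD.ind} the classes $VFC_D^{J}$ and $VFC_D^{J_0}$ agree after inclusion, similarly on the $D'$ side.

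It then remains to show $VFC_{D_0}^{J_0}=VFC_{D_1}^{J_1}$ along the path $\{J_t\}$. Here I would run the same continuity/Cech-limit machinery used in the proof of Theorem~\ref{5.CechcycleLemma} and Proposition~\ref{9.differentD}, but applied fiberwise along $t$: for a dense set of $t$ the divisor $D_t$ is genuinely $E$-stabilizing for $(X,V\cup D_{t'})$ for nearby $t'$, so on small subintervals one has a common stabilizing divisor and Proposition~\ref{9.differentD} (via Proposition~\ref{P.relV=relVD}) gives $VFC_{D_t}^{J_t}=VFC_{D_{t'}}^{J_{t'}}$ after inclusion into the moduli space over the subpath. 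A compactness/overlap argument over $[0,1]$ then chains these equalities together, and the continuity axiom \eqref{cech.cont} of rational Cech homology identifies the resulting classes in $\cHH_*(\oM^\gamma_{A,g,n,s}(X,V);\Q)$, where $\gamma$ is the concatenation of $\{J_t\}$ with the two short end-paths.

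The main obstacle I anticipate is the middle step: producing the family $\{J_t\}$ with $J_t\in\J_{D_t}(X,V)$ and, more importantly, arranging that for a dense set of parameters $D_t$ is \emph{$E$-stabilizing} for the pair $(X,V\cup D_{t'})$ with $t'$ in a neighborhood — i.e. that the inequality \eqref{st. ineq.V} and the "no non-constant maps into $D_t$ or $D_{t'}$" condition hold simultaneously for the enlarged divisor $V\cup D_t\cup D_{t'}$. This is where one must be careful that the set $\C_{E,B}$ does not jump (Lemma~\ref{L.top.not.jumps}) and that the transversality statements of Proposition~\ref{M.s-stable.V}, applied now in a one-parameter family, still yield domain-stability generically; the dimension counts there were already designed for generic $1$-parameter families, so the essential content is bookkeeping, but it is the step requiring the most care.
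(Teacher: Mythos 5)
There is a genuine gap, and it starts with your reading of Lemma~\ref{exist.Don.div}(c). That lemma does \emph{not} produce a path of divisors whose endpoints are the given $D$ and $D'$: it produces a path of \emph{new} Donaldson divisors $D_t$ (Poincar\'e dual to $m\om'$, so in general not even homologous to $D$), whose endpoints $D_0,D_1$ are $E$-stabilizing for the \emph{enlarged} pairs $(X,V\cup D)$ and $(X,V\cup D')$ --- in particular $D_0$ is transverse to $V\cup D$ and there is a $J_0\in B\cap\J_{D\cup D_0}(X,V)$, and similarly at the other end. Your statement ``$D_0=D$ and $D_0$ is $E$-stabilizing for $(X,V\cup D)$'' is self-contradictory ($D$ cannot be transverse to itself), and discarding the auxiliary endpoint divisors removes exactly the mechanism the argument needs: the non-emptiness of $B\cap\J_{D\cup D_0}(X,V)$ and $B\cap\J_{D'\cup D_1}(X,V)$ is what lets Proposition~\ref{9.differentD} be applied \emph{at the two endpoints} to convert $VFC_D^{J_0}$ into $VFC_{D_0}^{J_0}$ and $VFC_{D'}^{J_1}$ into $VFC_{D_1}^{J_1}$.

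Because of this, the entire burden in your version falls on the interior step, and the mechanism you propose there does not work as stated. Applying Proposition~\ref{9.differentD} to nearby pairs $(D_t,D_{t'})$ requires that $D_t$ and $D_{t'}$ be mutually transverse (so that $V\cup D_t\cup D_{t'}$ is a normal crossing configuration), that some $J\in B\cap\J_{D_t\cup D_{t'}}(X,V)$ exist, and that each divisor satisfy the stabilizing inequality (\ref{st. ineq.V}) for the pair enlarged by the other; none of this is supplied by Lemma~\ref{exist.Don.div} or by the Donaldson--Auroux results as used in the paper (the one-parameter construction makes each $D_t$ approximately $J'_t$-holomorphic and arranges $\eta$-transversality to the fixed divisors $V_i$ only at the endpoints --- it says nothing about transversality between two members of the family, which are $C^0$-close to each other, nor about a common compatible $J$). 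You would need a genuinely new two-parameter transversality statement to carry this out. The correct route avoids all pairwise comparisons in the interior: with $J_t\in B\cap\J_{D_t}(X,V)$ along the path (which the construction of Lemma~\ref{exist.Don.div}(c) provides), the class $VFC_{D_t}^{J_t}$ is constant in $t$ after inclusion into $\cHH_*(\oM^{\gamma_0}(X,V),\Q)$ by the one-parameter (cobordism/continuity) version of the construction of Sections 7--9 --- Proposition~\ref{M.s-stable.V} and Lemma~\ref{7.Ruan-Tianlemma} are stated for generic paths precisely for this purpose --- and Proposition~\ref{9.differentD} is invoked only twice, at the endpoints, against the fixed divisors $D$ and $D'$. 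Your outer steps (connecting $J$ to $J_0$ inside $B\cap\J_D(X,V)$ and $J'$ to $J_1$ inside $B\cap\J_{D'}(X,V)$ and concatenating) are fine and agree with the intended argument.
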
 
\begin{proof} For simplicity, set $\J=\J(X,V)$. Use Lemma 8.3 (c) to find a path $D_t$ of $E$-stabilizing divisors on $B$ for $(X, V)$ and whose endpoints are $E$-stabilizing divisors for $(X, V\cup D)$ and respectively $(X, V\cup D')$ on $B$ thus $B\cap \J_{D\cup D_0}\ne \emptyset$ and $B\cap \J_{D'\cup D_1}\ne \emptyset$.  Since $B\cap (\cup_t \J_{D_t})$ is path connected, we can therefore find a path $\gamma_0=\{J_t\}$  such that  $J_t \in B\cap \J_{D_t}$ and with endpoints $J_0\in B\cap \J_{D\cup D_0}$ and $J_1\in B\cap \J_{D\cup D_1}$.  The $VFC_{D_t}^{J_t}$ is well defined and becomes constant in $t$ after inclusion in $\cHH_*(\oM^{\gamma_0}(X), \Q)$

Moreover,  since the endpoints $J_0\in B\cap \J_{D\cup D_0}$ and $J_1\in B\cap \J_{D'\cup D_1}$ then Proposition \ref{9.differentD} applies to give
\best
VFC_D^{J_0}= VFC_{D\cup D_0}^{J_0}= VFC_{D_0}^{J_0}\quad \quad 
VFC_{D'}^{J_1}= VFC_{D'\cup D_1}^{J_1}= VFC_{D_1}^{J_1}
\eest
Next, $B \cap\J_{D}$ is path connected, so we can find a path $\gamma_1$ in $B \cap\J_{D}$ from $J$ to $J_0$ on which the $VFC_D$ is well defined and moreover 
\best
VFC_D^{J'}=VFC_D^{J_0}
\eest
after inclusion in $\cHH_*(\oM^{\gamma_1}(X), \Q)$. Similarly, we can find a path $\gamma_2$ in 
$B \cap\J_{D'}$ from $J_1$ to $J'$ such that  after inclusion in $\cHH_*(\oM^{\gamma_2}(X), \Q)$
\best
VFC_D^{J_1}=VFC_D^{J'}
\eest
Now let $\gamma$ denote the concatenation of the path $\gamma_1\# \gamma_0\# \gamma_2$, which is a path in $B\cap \J(X, V)$ from  $J$ to $J'$. After inclusion into $\cHH(\oM^{\gamma}(X), \Q)$, combining the last three displayed equation above then gives the desired equality
\best 
VFC_{D}^J=VFC_D^{J_0} = VFC_{D_0}^{J_0} = VFC_{D_1}^{J_1}= VFC_{D'}^{J_1}=  VFC_{D'}^{J'}
\eest
 in  $\cHH(\oM^{\gamma}(X), \Q)$. 
\end{proof}

\vspace{1cm}


\setcounter{equation}{0}
\section{Defining the VFC via stabilizing divisors} 
\label{section10}
\medskip

We can  now, at last,  complete the proof of Theorem~\ref{VFCfinecase} by  defining a virtual fundamental class for any pair $(X,V)$.  The context remains the same:  $X$ is a closed symplectic manifold and $V\subset X$ is a (possibly empty) normal crossing divisor for some compatible structure $(\w,J, g)$ on $X$.   We will also fix an energy level $E$ and work with data $(A,g)$ below energy $E$.   Finally, we assume that  
$(X, V)$ satisfies Assumption \ref{dim assumption}. 

\begin{prop}
\label{main.thm}
Let $V$ and $(X, \om, J)$ be as above. Then as long as $E(A, g)\le E$:
\begin{enumerate}[(a)]
\item For any shrinking sequence $B_i$ of $C^0$ balls in $\J(X,V)$ centered at  $(\om, J)$ there exists stabilizing divisors $D_i$ for $(X,V)$ on $B_i$ and a sequence $\ep_i\ra 0$ such that \begin{enumerate}[(i)]
 \item for generic $(\om, J_i, \nu_i)\in \JV_{D_i}(X, V)$  with $(\om,J_i)\in B_i$ and $|\nu_i|<\ep_i$, 
 the symmetrized relative moduli spaces 
 $$
 \ov \M^{J_i,\nu_i}_{A, g, n, s_V\cup [1]}(X, V\cup D_i)  
 $$
 defined as in \eqref{symmetrizedMS},   are  domain-stable and carry  fundamental classes in \Cech homology.
\item After forgetting $D_i$, their actual fundamental cycles pass to the limit to define a  rational  \v{C}ech homology  element
\bear\label{pass.lim}
\ma \lim_{i\ra \infty}\phi_{D_i*} [ \ov \M^{J_i,\nu_i}_{A, g, n, s_V\cup [1]}(X,V\cup D_i)] \in 
 \cHH_*(\ov \M^{J}_{A, g, n, s_V}(X, V);\; \Q).
\eear
\end{enumerate}
\item The class   (\ref{pass.lim}) is independent of the choices made in (a) and is invariant under deformations of the compatible pair $(\om, J)$ inside $\J(X,V)$. 
\end{enumerate}
\end{prop}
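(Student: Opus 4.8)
The plan is to prove Proposition~\ref{main.thm} by assembling the machinery developed in Sections~\ref{section5}--\ref{section9}, treating part~(a) as a combination of the stabilizing-divisor existence statement (Lemma~\ref{exist.Don.div}) and the transversality/gluing results (Proposition~\ref{M.s-stable.V}, Theorem~\ref{7.Maintheorem}), and part~(b) as an application of Theorem~\ref{cor.D=D'} together with the continuity axiom~(\ref{cech.cont}) of Cech homology.

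\smallskip

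\noindent\textbf{Part (a).} Fix the shrinking sequence $B_i$ of $C^0$ balls around $(\om,J)$. For each $i$, apply Lemma~\ref{exist.Don.div}(b) to produce an $E$-stabilizing divisor $D_i$ for $(X,V)$ on $B_i$ (shrinking $B_i$ first if necessary so that Lemma~\ref{exist.Don.div} applies). By Proposition~\ref{M.s-stable.V}, there is an open dense path-connected $\wh U_i\subset \J_{D_i}(X,V)\cap B_i$ over which the relative moduli spaces $\ov\M_{A,g,n,s}(X,V\cup D_i)$ with $E(A,g)\le E$ are domain-stable. Now pass to Gromov-type perturbations: as in Section~\ref{Section3.1} and item~D of Section~\ref{section7}, enlarge to $\JV_{D_i}(X,V)$, and use Lemma~\ref{7.Ruan-Tianlemma} (with the twisted $G$-cover and the roots of leading coefficients added, as in diagram~(\ref{MsG.to.Ms}) and the discussion following Remark~\ref{R.symMS}) to find, for generic $(J_i,\nu_i)$ with $(\om,J_i)\in B_i$ and $|\nu_i|$ smaller than some $\ep_i\to 0$, that the fully desymmetrized relative moduli space is a compact oriented smooth manifold. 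Dividing by the finite groups $\Mu_{s_V\cup[1]}$, $G$ and the symmetric groups exhibits $\ov\M^{J_i,\nu_i}_{A,g,n,s_V\cup[1]}(X,V\cup D_i)$ as a smooth global quotient orbifold carrying an actual (rational) fundamental cycle; this gives~(i). For~(ii), push forward by $\phi_{D_i}$ (Definition~\ref{9.defnVFC.1}) to land in $\cHH_*(\ov\M^{J_i,\nu_i}_{A,g,n,s_V}(X,V);\Q)$, and then run exactly the broken-path/nested-compacta limiting argument of Theorem~\ref{5.CechcycleLemma}: since the $(J_i,\nu_i)$ converge in $C^0$ to $(J,0)$ and the moduli spaces $\pi^{-1}$ of the relevant compact sets form a nested sequence of compact metric spaces (Theorem~\ref{3.TopTheorem}) intersecting in $\ov\M^{J}_{A,g,n,s_V}(X,V)$, the continuity axiom~(\ref{cech.cont}) produces the limit class~(\ref{pass.lim}). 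In fact this limit is nothing but $VFC^J_{D_i}$ in the sense of Definition~\ref{9.defnVFC.1}, made independent of $i$ along the way, which is exactly what one needs for part~(b).

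\smallskip

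\noindent\textbf{Part (b).} Independence of the choices splits into three sub-claims. First, independence of the auxiliary data $G$ and the roots follows from Theorem~\ref{6.maintheorem}(i) and Theorem~\ref{7.Maintheorem}: the factors $1/|G|$ and $1/|\Mu_s|$ are exactly compensated by the degrees of the corresponding quotient maps, so the pushed-forward class does not see these choices. Second, independence of the stabilizing divisor $D_i$ and of the $C^0$-ball sequence $B_i$: given two admissible sequences $(B_i,D_i)$ and $(B_i',D_i')$, for each large $i$ both $D_i$ and $D_i'$ are $E$-stabilizing for $(X,V)$ on a common ball, so Theorem~\ref{cor.D=D'} furnishes a path $\gamma_i$ in $B_i\cap\J(X,V)$ along which $VFC^{J_i}_{D_i}=VFC^{J_i'}_{D_i'}$ after inclusion; since the moduli spaces over the $\gamma_i$ also form a nested sequence of compacta shrinking to $\ov\M^J_{A,g,n,s_V}(X,V)$, applying~(\ref{cech.cont}) once more shows the two limits in~(\ref{pass.lim}) agree. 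Third, invariance under deformation of $(\om,J)$ within $\J(X,V)$: given a path $\{(\om_t,J_t)\}$ in $\J(X,V)$, cover it by finitely many $C^0$ balls on each of which Lemma~\ref{exist.Don.div}(b) gives a stabilizing divisor, use Lemma~\ref{exist.Don.div}(c) to interpolate between stabilizing divisors adapted to the two endpoints, and concatenate the resulting VFC-constancy statements (Theorem~\ref{cor.D=D'} and the path-independence~(\ref{vfc.VD.ind})) along the path; passing to the limit gives equality of the classes~(\ref{pass.lim}) at the two endpoints.

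\smallskip

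\noindent\textbf{Main obstacle.} The routine parts are the transversality and gluing inputs, which are quoted wholesale. The delicate point is the \emph{bookkeeping of the Cech limits}: each assertion in part~(b) requires building a single nested sequence of compact metric subspaces of the universal moduli space that simultaneously contains the perturbed representatives for \emph{both} constructions being compared and shrinks to the common fiber $\ov\M^J_{A,g,n,s_V}(X,V)$, so that~(\ref{cech.cont}) can be invoked to equate the two limits. This means one must choose the perturbation sizes $\ep_i$, the stabilizing divisors $D_i$, and the connecting paths of Theorem~\ref{cor.D=D'} in a compatible, diagonal fashion; the argument is the same ``ladder'' picture as in the proof of Theorem~\ref{5.CechcycleLemma}, but now with the extra layer that the stabilizing divisor itself is allowed to vary along the ladder. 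I expect this diagonalization — rather than any new analytic input — to be where the proof requires the most care.
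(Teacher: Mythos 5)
Your proposal is correct and follows essentially the same route as the paper: the paper likewise takes $D_i$ from Lemma~\ref{exist.Don.div}(b) on the shrinking balls $B_i$, picks $J_i\in\J_{D_i}(X,V)\cap B_i$, connects the consecutive classes $VFC_{D_i}^{J_i}$ by the paths furnished by Theorem~\ref{cor.D=D'}, passes to the limit via the continuity axiom exactly as in Theorem~\ref{5.CechcycleLemma}, and handles independence of choices and deformation invariance by the same ladder and finite-cover-of-the-path arguments. The only presentational difference is that the paper invokes Theorem~\ref{cor.D=D'} already in part (a), when building the single sequence (since the divisor changes in passing from $J_i$ to $J_{i+1}$), a point you defer to part (b) and to your ``main obstacle'' remark but clearly identify.
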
 
\non {\bf Proof}. For this argument we are working in the neighborhood of a fixed moduli space  $\ov\M^J(X,V)=\ov\M^J_{A, g, n, s}(X, V)$; in particular $A, g, n, s$ will be fixed, where $E(A, g)\le E$.  

Consider the sequence of $C^0$-balls $ B_n=B(J,\tfrac{1}{n}) $. For each $n$, by Lemma  8.3 (b) there exits  an $E$-stabilizing divisor $D_n$ for $(X, V)$  on $B_n$ thus  
$\J_{D_n}(X, V)\cap B_n\ne \emptyset$. Pick $J_n\in \J_{D_n}(X, V)\cap B_n$ and consider the class 
$VFC_{D_n}^{J_n}$ defined in Definition \ref{9.defnVFC.1}. 

Now use Theorem~\ref{cor.D=D'} to construct paths $\gamma_n$ in $B_n$ from $J_n$ to $J_{n+1}$, and let  $K_m$ be the union of $J_0$ with the paths $\gamma_n$  for all $n\ge m$, as in the proof of Theorem \ref{5.CechcycleLemma}. Then under the inclusion 
$K_n \hookrightarrow K_m $: 
\best
VFC_{D_n}^{J_n}=VFC_{D_m}^{J_m}\in \cHH_*(M_m, \Q)
\eest
for all $n\ge m$, where $M_m$ is the moduli space $\ov\M(X, V)$ over $K_m$. This means that the VFC passes to the limit to give a class in 
$\cHH_*(\oM^{J}(X, V), \Q)$. Independence of the sequence $D_n, J_n$ is proven by the ladder argument: if $(D_n, J_n)$ and $(D_n', J_n')$ are two such sequences, Theorem \ref{cor.D=D'} provides  paths $\de_n$ in $B_m$ that together with the original paths form the ladder. 

This shows that (\ref{pass.lim}) is well defined, independent of choices made. This class is invariant under deformations of $(\om, J)$ in the sense that for any smooth path $\gamma$ of compatible pairs $(\om_t, J_t)\in \J(X, V)$ the two VFC have equal inclusions into the  \v{C}ech homology of the moduli space over the path: 
\best
VFC_{J_0}=VFC_{J_1}\in \cHH_*( \ov\M_{A, g, n,s}^{B}(X, V); \Q)
\eest
This follows from a similar continuity argument using the fact that $\gamma$ is compact, by taking 
$B_n$ a finite cover of it with balls of radius $1/n$ centered at points in $\gamma$. 
\qed 
\bigskip

 With $(X,V) $ and $E\ge E(A, g)$ as above, consider a shrinking sequence $B_i$ of $C^0$ balls  around  $(\om, J)$ in  $\J(X,V)$,  and pick  $E$-stabilizing divisors $D_i$ for $(X, V)$ on $B_i$.
 
\begin{defn} 
{\em The virtual fundamental class} of $\ov\M^{J}(X,V)$ is  the rational  \v{C}ech homology class
\bear\label{D.virX=virXD}
[\ov\M_{A, g, n, s} ^J(X, V)]^{vir}\quad\ma =^{def} \,
\lim_{i\ra \infty}\phi_{D_i*} [\ov\M_{A, g, n, s\cup [1]}(X, V\cup D_i)]^{vir}\in \cHH_*(\ov\M^J(X,V);\Q)
\eear 
and the corresponding   {\em relative GW invariant}  is 
\bear
\label{10.GWdef}
GW_{A, g, n, s} (X, V)=se_*[\ov\M_{A, g, n,s} ^J(X, V)]^{vir}\in H_*(\oM_{g, n+\ell(s)} \ti N_s V)
\eear
\end{defn} 

By Proposition~\ref{D.virX=virXD}b, the virtual fundamental class is well defined, independent of the choices made in its construction.  Thus we have completed the proof of Theorem~\ref{VFCfinecase} of the introduction.  In particular, in terms of GW invariants, we have the following:
\begin{cor}\label{T.GW=semipos}
The relative invariant  (\ref{10.GWdef}) is well-defined and  invariant under deformations. 
When $X$ is semi-positive and $V$ is empty, it agrees with the invariant
$GW(X)$  defined in \cite{rt2}, and when $V$ is smooth it agrees with the
relative invariant of $GW(X,V)$  defined in \cite{IP1}. 
\end{cor}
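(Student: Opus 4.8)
The plan is to prove Corollary~\ref{T.GW=semipos} by reducing the comparison to a setting where the virtual fundamental class constructed here is \emph{honestly a fundamental class}, namely the domain-fine regular case, and then invoking the naturality axioms (Axioms A1--A4, equivalently the properties established in Theorems~\ref{5.CechcycleLemma}, \ref{6.maintheorem}, \ref{7.Maintheorem} and Proposition~\ref{main.thm}) to transport the identification back to the moduli spaces without stabilizing divisors.

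\medskip

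\noindent\textbf{Step 1: Well-definedness and deformation invariance.} These follow immediately. Proposition~\ref{main.thm}b states that the class (\ref{D.virX=virXD}) is independent of all choices and invariant under deformations of $(\w,J)$ inside $\J(X,V)$; pushing forward by the continuous $se$ map then gives that $GW_{A,g,n,s}(X,V)$ of (\ref{10.GWdef}) is well-defined and deformation invariant, exactly as in Corollary~\ref{1.GWCor}. Nothing new is needed here.

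\medskip

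\noindent\textbf{Step 2: The semi-positive absolute case.} Fix $X$ semi-positive, $V=\emptyset$. Here the point is that Ruan--Tian's construction in \cite{rt1} already produces a rational homology class: one perturbs the $\del_J$ equation by an inhomogeneous term $\nu$ (a Gromov-type perturbation in the sense of Section~\ref{Section3.1}) so that all strata of positive codimension, and all multiple-cover and bubble configurations, are avoided or cut out transversally, using semi-positivity to control the ``bad'' strata. For a generic such $(J,\nu)$ the moduli space $\ov\M^{J,\nu}_{A,g,n}(X)$ is a closed oriented (pseudo)manifold of the expected dimension, carrying a genuine fundamental class, and $GW_{RT}(X)$ is the image of this class under $se$. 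On the other hand, by Theorem~\ref{6.maintheorem} (and the Remark following it, extending the VFC over $\JV^G$) our $[\ov\M^{J}_{A,g,n}(X)]^{vir}$ is defined by the same limiting procedure over Gromov-type perturbations: for generic small $(J,\nu)$ one takes the actual fundamental class of the (domain-stable, regular) moduli space and passes to the Cech limit as $\nu\to 0$, after introducing and then dividing out a twisted $G$-cover to kill automorphisms. Since the $G$-cover map $\phi_G$ has degree $|G|$ on these regular moduli spaces (Axiom A3 / (\ref{6.VFC.G})), the $G$-decorated fundamental class pushes to $|G|$ times the Ruan--Tian cycle; dividing by $|G|$ recovers it exactly. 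The two limiting procedures therefore produce the same Cech class, so $se_*[\ov\M^J_{A,g,n}(X)]^{vir}=GW_{RT}(X)$. The only subtlety is matching the \emph{space} of perturbations: one must check that the Ruan--Tian perturbations $\nu$ can be taken of the restricted form (\ref{3.J.nu}) (pulled back via the graph embedding $\ov\U\hookrightarrow\P^M$); this is precisely the content of the proof of Lemma~\ref{6.Ruan-Tian2}, which shows such perturbations already suffice for regularity of somewhere-injective (graph) maps.

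\medskip

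\noindent\textbf{Step 3: The smooth relative case.} Fix $V\subset X$ a smooth $J$-holomorphic divisor. The argument is parallel, using Theorem~\ref{7.Maintheorem} in place of Theorem~\ref{6.maintheorem}: the relative VFC is built as a Cech limit of actual fundamental classes of regular, domain-fine relative moduli spaces $\ov\M^{G,s,J,\nu}_{A,g,n}(X,V)$, divided by $|G|\,|\Mu_s|$ to account for the twisted-$G$ and root decorations (which have degrees $|G|$ and $|\Mu_s|$ respectively). The relative invariants $GW(X,V)$ of \cite{IP1} are, in the semi-positive situation, likewise defined by a generic Gromov-type perturbation making the relative moduli space (with its matching condition cut out by $\Ev^{-1}(\De)$, cf.\ (\ref{rel.pr.st.ev})) a closed oriented pseudomanifold, and taking $se_V$ of its fundamental class. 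Lemma~\ref{7.Ruan-Tianlemma} and the discussion following (\ref{resolution})--(\ref{la=mu}) identify the relevant perturbation space and the local resolution model; as in Step~2 one checks perturbations of the restricted form suffice. Then the two Cech limits coincide and pushing forward by $se_V$ gives $se_{V*}[\ov\M^J_{A,g,n,s}(X,V)]^{vir}=GW(X,V)_{IP}$. Finally one must verify the definition (\ref{D.virX=virXD}) via a stabilizing divisor $D$ is consistent with this: but that is exactly Axiom A4, established as Proposition~\ref{9.differentD} and Theorem~\ref{cor.D=D'} together with Proposition~\ref{main.thm} — forgetting $D$ and dividing by $\ell(s')!$ returns the relative VFC of $(X,V)$ computed without $D$, so in the semi-positive case where no $D$ is needed the two agree.

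\medskip

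\noindent\textbf{Main obstacle.} The genuinely delicate point, and the one I would spend the most care on, is Step~2's matching of \emph{orientations and perturbation classes}: one must confirm that the determinant-line orientation used here (as in \cite{rt1}, cited in Theorem~\ref{Ruan-Tian}) agrees with the one Ruan--Tian use for $GW_{RT}$, and that the restricted Gromov-type perturbations (\ref{3.J.nu}) — which are domain-dependent only through the fixed graph embedding $\ov\U\hookrightarrow\P^M$ — are rich enough to achieve the transversality Ruan--Tian obtain with their (a priori more general) inhomogeneous terms, for \emph{all} strata including multiple covers in the semi-positive range. The $G$-cover bookkeeping ($\phi_G$ having degree $|G|$, and the limit commuting with $(\phi_G)_*$) is routine given Theorem~\ref{6.maintheorem}, and the relative case adds only the analogous bookkeeping for $\Mu_s$ via (\ref{MsG.to.Ms}); neither should present real difficulty once Step~2 is secured. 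Strictly speaking these comparisons are known in the literature (the equivalence of the Cieliebak--Mohnke / divisor approach with classical GW in the semi-positive case), so the proof can legitimately be compressed to ``this is the standard comparison, using that in the semi-positive case every regular perturbed moduli space here is a genuine pseudocycle and our VFC is its class.''
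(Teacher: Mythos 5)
The paper offers no separate proof of this corollary: it is stated as an immediate consequence of the construction, with well-definedness and deformation invariance coming from Proposition~\ref{main.thm}(b) exactly as in your Step~1, and the agreement with \cite{rt1} and \cite{IP1} left as the assertion that the present construction specializes to those definitions. So your Step~1 matches the paper, and your overall strategy (reduce to regular perturbed moduli spaces and match pseudocycles) is the intended one.

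There is, however, a concrete inaccuracy in your Step~2. For semi-positive $X$ with $V=\emptyset$ the relevant compactified moduli spaces are typically \emph{not} domain-stable (e.g.\ $g=0$ with $n<3$, or any stratum containing a bubble component of positive degree with fewer than three special points), and twisted $G$-covers do not repair this: the paper only asserts that decorations \emph{preserve} domain-stability and domain-fineness, not that they create them, which is precisely why stabilizing divisors are introduced. Consequently Theorem~\ref{6.maintheorem} does not apply at such $J$, and the paper's class $[\ov\M^J_{A,g,n}(X)]^{vir}$ is \emph{defined} by the divisor limit (\ref{D.virX=virXD}) even in the semi-positive case; it is not given directly as a Cech limit of $G$-decorated fundamental cycles as your Step~2 claims. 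The comparison with Ruan--Tian must therefore pass through the divisor-decorated moduli spaces $\ov\M_{A,g,n,[1]}(X,D_i)$ --- for instance by marking the $A\cdot D_i$ transverse intersection points of a generic Ruan--Tian representative, which is a degree-one operation after symmetrizing, or by an argument in the spirit of Section~\ref{S.contrib} --- and note that Theorem~\ref{P.relV=relVD} itself assumes domain-stability of the $V$-side, so Axiom A4 cannot simply be quoted at the given $J$ either. You do address the divisor-removal issue for the relative case in Step~3, but the absolute semi-positive case needs the same care; as written, Step~2 understates where the real content of the comparison lies (this is essentially the Cieliebak--Mohnke-type comparison you allude to in your final paragraph, and it deserves to be carried out rather than folded into an appeal to Theorem~\ref{6.maintheorem}).
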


\vspace{1cm}


\setcounter{equation}{0}
\section{Independence of the stabilizing divisor }
\label{S.contrib}

\medskip

It remains  to show that the VFC of Definition~\ref{9.defnVFC.1} is independent of the stabilizing divisor $D$, and agrees with the VFC defined by Theorem~\ref{7.Maintheorem} for domain-stable  moduli spaces.  These two facts follow from Proposition~\ref{Prop11.2}  and Proposition~\ref{Prop11.1}, respectively.

Fix a homology class $A\not=0$ and a genus $g$ and set $E=E(A,g)$. Also fix a normal crossing divisor $V$  (possibly empty), and an $E$-stabilizing divisor $D$ for $V$.  Recall from Proposition~\ref{M.s-stable.V} that there is  a non-empty set of $J$ in $\J_D(X,V)$ for which there are no  non-constant $J$-holomorphic maps  into $D$  with energy  less than or equal to $E$, and the compactified moduli space  $\ov{\M}^J_{A,g,s\cup [\bf 1]}(X,V\cup D)$ is  domain-stable.  For each such $J$, under Assumption~\ref{dim assumption},  the following proposition applies, showing that the virtual fundamental classes of the stabilized and unstabilized moduli spaces agree whenever the unstabilized moduli space is already domain-stable.



\begin{prop}
\label{Prop11.1} 
Fix $A\not= 0$ and  a normal crossing divisor $V$.   Assume that $D$ is a smooth divisor transverse to $V$, and that $J\in\J_D(X,V)$ is such that
\begin{enumerate}[(a)]
\item the moduli space $\ov\M^{J}_{A, g, n, s_V}(X, V)$ is domain-stable,
\item there are no  non-constant $J$-holomorphic maps into $D$  with energy less or equal to $E(A, g)$, and 
\item either $g\le 1$ or every stratum of $V\cup D$ is at least 8 dimensional.  
\end{enumerate}
Then for each  $J\in\J_D(X,V)$ is in the set described in Proposition~\ref{M.s-stable.V}
\bear
\label{vfc.V=vfc.VD}
[\ov\M_{A, g, n, s_V}^J(X, V)]^{vir}= \phi_{D*} [\ov\M_{A, g, n, s_V\cup [{\bf 1}]}^J(X,V\cup D)]^{vir}\in 
\cHH_*(\ov\M^J_{A, g, n, s_V}(X, V); \Q)
\eear 
where $\phi_{D*}$ is  the map in diagram \eqref{9.projection}.  
\end{prop}

\begin{proof} 
First consider the case where $V=\emptyset$.  Because $D$ is $J$-holomorphic, the moduli space $\ov\M_{A, g, n}^J(X)$ of stable maps has a refined stratification. As described in the appendix, the strata are indexed by decorated graphs, whose vertices, which  correspond to irreducible component $C_i$ of $C$,  are  labeled  by a  genus, number of marked points, and the homology class $A_i=[f(C_i)]$; these are refined by also including labels indicating which $C_i$ are mapped into $D$, and, on the components not mapped to $D$, labels recording their contact  multiplicities to $D$.

The {\em principal stratum}  $\N_2$ is the open subset of $\M_{A, g, n}^J(X)$ consisting of maps $f:C\ra X$ with smooth domain that are transverse to $D$ and such that $f^{-1}(D)$ is disjoint from the special points of $C$.    By marking all points in $f^{-1}(D)$, one obtains a subset $\tilde{\N}$ inside  the relative moduli space $\M_{A, g, n, ({\bf 1})}^J(X,D)$ with a forgetful map $\tilde{\N}\to \N_2$.  The symmetric group $S_\ell$ acts by permuting the added $\ell$ marked points, and $\N_2$ is homeomorphic to the subset $\N_1=\tilde{\N}/S_\ell$, which is  the principal stratum   of  the  quotient   space $ \ov\M_{A, g, n, [{\bf 1}]}^J(X,D)$ defined by \eqref{symmetrizedMS}.

The same  stratification scheme applies  to the moduli spaces obtained by replacing $J$ by $(J, \nu)$,  where $\nu$ is any perturbation whose normal component to $D$ vanishes along $D$, and that is  invariant under the $S_\ell$ action.  These conditions are satisfied if $\nu$ is pulled back from a Ruan-Tian perturbation on $\ov \U_{g, n}^G\ti X$ whose normal component vanishes on   $\ov \U_{g, n}^G\ti D$. For every such pair $(J, \nu)$, the  map that forgets the  divisor $D$ and the extra $\ell$ points (and collapses any unstable domain components that are mapped to points) is a continuous map
\bear
\label{11.2}
\phi_D: \ov\M_{A, g, n, [{\bf 1}]}^{J,\nu}(X,D) \ra \ov\M_{A, g, n}^{J,\nu}(X)
\eear
that restricts to a homeomorphism from  $\N_1$ to $\N_2$.  The spaces in \eqref{11.2}  can be regarded as  compactifications of $\N_1$ and $\N_2$ respectively. So it suffices to show that there exists a sequence of parameters $\nu$ converging to zero such that:
 \begin{enumerate}[(i)] \setlength\itemsep{4pt}
\item All maps in the  principal strata $\N_1$ and $\N_2$ of the spaces in \ref{11.2} are regular, and  hence $\N_1$ and $\N_2$ are  oriented manifolds of dimension $\iota$. 
\item All the other strata of each compactification are  manifolds of dimension at most $\iota-2$.  
\item The restriction of $\phi_D$ to the principal stratum $\N_1$ preserves orientation.
\end{enumerate}   
It  then follows that the \Cech homology map induced by \eqref{11.2} takes the virtual fundamental class of the right-hand space to the virtual fundamental class of the left-hand space (see \cite{IPFC} for details).

\medskip

But facts (i)-(ii) are precisely what Tehrani and Zinger  proved in \cite[\S3.1]{tz}. Their key observation is that one can use condition (c) and a dimension count to  show that, for certain $(J,\nu)$,  the compactified moduli space contains no maps that have genus $g\le 2$ components mapped into $D$.  The dimension counts in \cite[\S3.1]{tz} imply  that under assumptions (b) and (c), the actual dimension of each stratum except the principal stratum  is still  at most $\iota-2$.  

The proof involves considering the restrictions of maps $f:C\to X$ to those irreducible components  $C_i$ mapped into $D$.  Note that, for each $(J, \nu)$ as above, the regularity of $f_i:C_i\to D$ as a map into $D$ is not the same as    the regularity of $f_i$ as a map into $X$.  The difference is captured by the normal component of the linearization, whose index,   under assumption (b), equals $2(1-g)$, which is non-negative for $g=0, 1$.  From this fact and dimension counts, 
 Tehrani and Zinger  establish  (ii) above.  We refer the reader to the proof of Theorem 1 in \cite[\S3.1]{tz} for   the required transversality and orientation arguments, including extensions without assumption (a).
 
\bigskip

The argument above extends to the case when $V$ is a normal crossing divisor, now using the stratification of $X$ induced by $V$ as in the proof of Proposition~\ref{M.s-stable.V}. 
In fact, under assumptions (a)-(c), one does not even need to consider the sections of the normal bundle  to $D$ that arise from renormalization in order to construct a compactification of $\N_1=\M_{A, g, n, s_V, [{\bf 1}]}(X, V\cup D)$ with codimension 2 boundary strata. On the other hand,  as above, $\N_1$ is identified with
the principal stratum $\N_2$   of $\M_{A, g, n, s_V}(X, V)$.  Again, dimension counts extending those in \cite[\S3.1]{tz}  show that,  under assumptions (a)-(c),  all strata in the boundary  $\oM_{A, g, n, s_V}(X, V)\setminus \N_2$ have codimension at least 2.
\end{proof}

\medskip

\begin{rem} 
Without assumption (c),   the two virtual fundamental cycles cannot be related using dimension counts alone. In general, an analysis of \eqref{11.2} shows that
\best
[\ov\M_{A,g, n, s}(X, V)]^{vir}&=&  \phi_{D*} [\ov\M_{A, g, n, s, [1]}(X, V\cup D)]^{vir} + \mbox{ correction terms, }
\eest
where the correction terms come from the contribution of    strata of $\oM(X)$ consisting of maps with components in $D$ whose dimension is equal to, or higher than, the dimension of the virtual fundamental class.  There are several ways to calculate these contributions, but all involve gluing, either  using the trivial decomposition  of $X$ as the union of $(X, D)$ and the projectivized normal bundle  $\P_D$ (with $D$ identified with the infinity section of $\P_D$),  or alternatively using an obstruction bundle gluing argument as in \cite{HT}. 
\end{rem}

\begin{prop}
\label{Prop11.2}
 Assume $V$ is a normal crossing divisor in $(X, \om, J)$, while $D$ and $D'$ are two smooth $J$-holomorphic divisors such that $V, D, D'$ are in general position.  Fix $A\ne 0$ and assume that
\begin{enumerate}[(a)]\setlength\itemsep{4pt}
\item  the moduli space $\ov\M^{J}_{A, g, n, s\cup[{\bf 1}]}(X, V\cup D)$ is domain stable, and
\item  there are no  non-constant $J$-holomorphic maps into $D$ or $D'$  with energy less or equal to $E(A, g)$. 
\item either $g\le 1$ or every stratum of $V\cup D\cup D'$ is at least 8 dimensional.  
\end{enumerate}
Then $\ov\M^{J}_{A, g, n, s\cup [{\bf 1}] \cup [{\bf 1}]}(X, V\cup D\cup D')$ is also domain stable,  and  under the forgetful map
 \best
 \phi_{D'}: \ov\M^{J}_{A, g, n, s\cup [{\bf 1}] \cup [{\bf 1}]}(X, V\cup D\cup D')\ra \M_{A, g, n, s\cup [{\bf 1}]}^J(X,V\cup D)
 \eest 
 defined by \eqref{9.projection}  with $V$ replaced by $V\cup D$ and $D$ by $D'$, we have the equality
 \bear
\label{vfc.V=vfc.VD}
\phi_{D'*}[\ov\M_{A, g, n, s,\cup [{\bf 1}] \cup [{\bf 1}]}^J(X, V\cup D\cup D')]^{vir}= [\ov\M_{A, g, n, s\cup [{\bf 1}]}^J(X,V\cup D)]^{vir}
\eear 
as elements of $ \cHH_*(\ov\M^J_{A, g, n, s_V \cup [{\bf 1}]}(X, V\cup D); \Q)$. 
\end{prop}
\begin{proof} The assumptions immediately imply the first assertion because any constant components in $D'$ or $D\cap D'$ must have stable domains. Therefore 
each of the moduli spaces that appear in \eqref{vfc.V=vfc.VD}  has a well-defined virtual fundamental class,  obtained by using Ruan-Tian perturbations. However, the Ruan-Tian perturbations needed to achieve transversality are {\em a priori} different for these two moduli spaces. To relate the two  virtual fundamental classes, we must  restrict to a common class of perturbations for which the forgetful map 
 \best
 \phi_{D'}: \ov\M^{J, \nu}_{A, g, n, s\cup [1]}(X, V\cup D\cup D') \ra 
 \ov\M^{J, \nu}_{A, g, n, s}(X, V\cup D) 
 \eest 
 is well-defined and continuous. This is the case, for example, when we restrict to the subset of  $(J, \nu)\in \JV_D(X, V)$ for which the normal component to $D'$ of the perturbation $\nu$  vanishes.   It is straightforward to verify that this subset is a non-empty Banach submanifold of $\J_D(X,V)$.
 
It then suffices to find a sequence of such perturbations $\nu$ converging to zero, and open subsets 
\best
\N_1 \subseteq \M^{J, \nu}_{A, g, n, s\cup [1]}(X, V\cup D\cup D') \quad\mbox{ and }  
\N_2 \subseteq \M^{J, \nu}_{A, g, n, s}(X, V\cup D)
\eest
with the following properties: 
\begin{enumerate}[(i)]
\item each $\N_1$ and $\N_2$ consist only of regular points of their corresponding moduli space, and therefore are oriented manifolds of dimension $\iota$.  
\item $\phi$ restricts to an orientation preserving homeomorphism between $\N_1$ and $\N_2$
\item the complement of $\N_1$ in 
$\ov\M^{J, \nu}_{A, g, n, s\cup [1]}(X, V\cup D\cup D')$ is a set of dimension $\iota-2$. 
\item the complement of $\N_2$ in 
$\ov\M^{J, \nu}_{A, g, n, s\cup [1]}(X, V\cup D)$ is a set of dimension $\iota-2$. 
\end{enumerate}
As above, the sets $\N_1$ and $\N_2$ can be chosen to be the principal strata of the refined stratifications of the two moduli spaces, corresponding to maps $f:C\ra X$ with smooth domain, transverse to $D$ and $D'$, not entirely contained in $V\cup D\cup D'$ and missing the singular locus of $V\cup D\cup D'$. The only difference between $\N_1$ and $\N_2$ is that for $\N_1$ all the points in $f^{-1}( V\cup D\cup D')$ are already marked (and decorated with the multiplicity of contact), while for $\N_2$ only the points in $f^{-1}(V\cup D)$ are marked and decorated.   
  
The considerations of \cite[\S 3.1]{tz} now easily extend. Again, the key observation is that  condition (c) implies that all irreducible components that are mapped to  $D$, $D'$ or $D\cap D'$ have genus $g\le 1$.  For these,
 the index of the normal operator is nonnegative by (b), and therefore the actual dimension of each stratum of either compactification, except the principal stratum,   is again  at most $\iota-2$. 
 \end{proof}

 \vspace{1cm}
\setcounter{equation}{0}
\setcounter{section}{0}
\setcounter{theorem}{0}
{
\renewcommand{\theequation}{A.\arabic{equation}}
\renewcommand{\thetheorem}{A.\arabic{theorem}}
\renewcommand{\thesubsection}{\bf A.\arabic{subsection}}
\appendix{}

\section*{A{\sc ppendix}}
\medskip

This appendix establishes several  results  used  in previous sections that do not appear in the literature.  These are basic facts  about the topology of the moduli space of $J$-holomorphic maps  and the existence of $V$-compatible almost complex structures.

\subsection{Proof of Theorem~\ref{3.TopTheorem}}

Theorem~\ref{3.TopTheorem} states several properties   about the topology of the moduli space.    The proofs are presented below,  organized into four steps.  Along the way,  two general facts about metric spaces and maps $f:X\ra Y$ are used repeatedly:
\begin{enumerate}[(i)]
\item  If $(Y, d)$ is a metric space, then  $f:X\ra Y$  induces a pseudo-metric $f^*d$ on $X$; this defines a topology on $X$ for which $f$ is continuous.
\item  A metric on $X$ induces a  metric $d_{\cal H}$ -- the Hausdorff  distance --  on the set $Z=Subsets_{c}(X)$ of  its non-empty compact subsets that is compact whenever $(X,d)$ is compact.
 \end{enumerate}

 \medskip

\noindent{\sc Step 1.}    Using Facts~(i) and (ii), define an initial topology on $\ov\M_{A, g, n}(X)$ by pulling back the metrics on $\J(X)$, $\ov\M_{g,n}$ and $\ov\M_{g, n+1}\ti X$ by 
 $\pi\ti \st \ti \Gamma$. More precisely, define the pseudo-metric $d_0$ by setting
\bear
\label{3.Hdist}
d_0\big((C,f, J), (C',f',J')\big) \ =\ d_{\J}(J, J')\ +\   d_{\oM_{g, n}}\big(\st(C),\st(C')\big)\ +\ d_{\cal H}(\Gamma_f, \Gamma_{f'})
\eear
with $\Gamma_f$ as in (\ref{3.defGammaf}).

\begin{lemma} \label{L.d0=0}Assume $f, h$ are stable (perturbed) pseudo-holomorphic maps with same topological data 
$(A, g, n)$. If  $\Aut\; C_f=1$,  then $d_0(f, h)=0$ if and only if $f$ and $h$ differ by a reparametrization. 
\end{lemma}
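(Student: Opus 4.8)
The plan is to prove the two implications separately; the forward direction, $d_0(f,h)=0\Rightarrow$ reparametrization, is the substantive one.

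For the reverse implication I would argue as follows: if $\phi\colon C_h\to C_f$ is an isomorphism of $n$-marked curves with $h=f\circ\phi$, then the two maps carry the same parameter $J$ (a reparametrization preserves the perturbed holomorphic map equation), isomorphic curves have equal stable models so $\st(C_h)=\st(C_f)$, and naturality of the classifying map gives $\iota_h=\iota_f\circ\phi$, whence $F_h=F_f\circ\phi$ and $\Gamma_h=F_h(C_h)=F_f(C_f)=\Gamma_f$; all three summands of \eqref{3.Hdist} then vanish.

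For the forward direction, assume $d_0(f,h)=0$. Since the Hausdorff distance is a genuine metric on nonempty compact sets, the summands vanish separately, so $J_f=J_h$, $\st(C_f)=\st(C_h)=:b\in\ov\M_{g,n}$, and $\Gamma_f=\Gamma_h$ as compact subsets of $\ov\M_{g,n+1}\times X$. First I would use the hypothesis $\Aut\,C_f=1$: it forces $C_f$ to be a stable curve (an unstable prestable curve has positive-dimensional automorphism group, so in particular we are in the range $2g-2+n>0$), and then the classifying map $\iota_f$ is a closed embedding of $C_f$ onto the fiber $\mathcal F_b:=p^{-1}(b)$ of the universal curve $p\colon\ov\M_{g,n+1}\to\ov\M_{g,n}$. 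Hence $F_f=(\iota_f,f)$ is an embedding and $\Gamma_f$ is the graph of the continuous map $u:=f\circ\iota_f^{-1}\colon\mathcal F_b\to X$; in particular $\Gamma_f$ meets every slice $\{q\}\times X$, $q\in\ov\M_{g,n+1}$, in at most one point.

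The next and hardest step is to show that $C_h$, whose structure is a priori uncontrolled, is also a stable curve. Here I would use that the classifying map $\iota_h$ factors through the contraction $C_h\to\st(C_h)$ (cf.\ \eqref{iota.gr}), so $\iota_h$ is constant on each unstable component of $C_h$; if $C_h$ had an unstable component $C_0$, say $\iota_h(C_0)=\{q_0\}$, then stability of the map $h$ would make $h|_{C_0}$ non-constant (an unstable domain component cannot be null-homologous), so $\Gamma_h$ would meet $\{q_0\}\times X$ in more than one point, contradicting $\Gamma_h=\Gamma_f$. Thus $C_h$ is stable; and since $\st(C_h)=b=\st(C_f)=[C_f]$ in the coarse moduli space $\ov\M_{g,n}$, $C_h$ is isomorphic to $C_f$ as an $n$-marked curve, so $\Aut\,C_h=\Aut\,C_f=1$ and $\iota_h$ too is a closed embedding of $C_h$ onto $\mathcal F_b$. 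Finally I would set $\phi:=\iota_f^{-1}\circ\iota_h\colon C_h\to C_f$; it is an isomorphism respecting marked points (both $\iota_f$ and $\iota_h$ send the $i$-th marked point to the $i$-th tautological section $\sigma_i(b)\in\mathcal F_b$), and for $z\in C_h$ the point $(\iota_h(z),h(z))$ lies on $\Gamma_h=\Gamma_f=\{(p,u(p)):p\in\mathcal F_b\}$, so $h(z)=u(\iota_h(z))=f\bigl(\iota_f^{-1}(\iota_h(z))\bigr)=f(\phi(z))$, i.e.\ $h=f\circ\phi$. The main obstacle is exactly the step that $d_0(f,h)=0$ forces the uncontrolled domain $C_h$ to be domain-fine and canonically identified with $C_f$; this rests on the precise behaviour of the stabilization/classifying map on prestable curves (it collapses exactly the unstable components to points), after which equality of graphs determines $h$ on the nose.
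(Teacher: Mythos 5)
Your proof is correct, and its overall skeleton matches the paper's: reparametrization invariance of $d_0$ for the easy direction, then using $\Aut C_f=1$ to view $\Gamma_f$ as the graph of $f\circ\iota_f^{-1}$ over the fiber of the universal curve, ruling out unstable components of $C_h$, and finally reading off $h=f\circ\phi$ from $\Gamma_f=\Gamma_h$. The one place where you genuinely diverge is the key step that $C_h$ has no unstable components. The paper argues by energy accounting: since the graphs agree, $f$ and $h$ coincide on the stable part, so $E(h)=E(f)+E(h^{+})$, while $E(f)=E(h)$ because both maps represent $A$ (via \eqref{3.wEinequality}), and map-stability forces $E(h^{+})>0$ on any nonempty union of unstable components, a contradiction. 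You instead argue geometrically: the classifying map $\iota_h$ of \eqref{iota.gr} collapses each unstable component (indeed each unstable tree) of $C_h$ to a single point $q_0$, while map-stability in its topological form makes $h$ non-constant there, so $\Gamma_h$ would meet the slice $\{q_0\}\times X$ in at least two points, contradicting that $\Gamma_f$ meets every slice in at most one point. Your route buys something in the perturbed setting: it uses only the topological characterization of stability and injectivity of $\iota_f$, and avoids invoking the identity $E=\w(A)$, which as stated in \eqref{3.wEinequality} is for honest $J$-holomorphic maps rather than solutions of \eqref{del=nu}; the paper's energy argument is shorter once that identity (or a suitable substitute comparing $E(f)$ and $E(h)$) is granted. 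Your observation that $\iota_h$ is constant on unstable components (because stabilization of $(C_h,z)$ is independent of the position of $z$ on such a component) is exactly the point that needs care, and it is correct.
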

\begin{proof} One direction is clear because $d_0$ is reparametrization invariant. Conversely,  assume $d_0(f, h)=0$. Then $\st(f)=\st(h)\in \ov\M_{g, n}$, $\pi(f)=\pi(h)$ and $\Gamma_f=\Gamma_h$. By assumption,  the graph $\Gamma_f$ is an embedded nodal curve in $\ov\M_{g, n+1} \ti X$.   Because $C_f$ is  already stable with $\Aut\; C_f=1$ and $\st(h)=\st(f)$ so we can  assume, after precomposing $f$ by an isomorphism, that  $C_f$ is obtained from  $C_h$ by collapsing the unstable rational components $C^{+}_h$ of $C_h$; moreover, $C_f=C_f/\Aut C_f$ is canonically isomorphic to the fiber of the universal curve over $[C_f]\in \ov\M_{g, n}$.  Since $\Gamma_f=\Gamma_h\subset \ov\U_{g, n}\ti X$ then the restrictions of $f$ and $h$ to the stable part $C_f$ of their domain must now be equal. The energies then satisfy $E(h)= E(f)+E(h^{+})$ where $h^{+}$ is the restriction of $h$ to $C_h^{+}$.  On the other hand,  $E(f)=E(h)$ because $f$ and $h$ represent the same homology class. Since $h$ is a stable map, $C^+_h=\emptyset$ 
since otherwise $E(h^+)>0$.  We conclude that $f$ and $h$  have the same domain and  $f=h$. 
  \end{proof}

\medskip

\noindent{\sc Step 2. }  Define a sequence of pseudo-metrics on $\ov\M_{A, g, n}(X)$ by  setting $d_k(f,f')= d_{\cal H}\big( \Gamma^k_{f}, \Gamma^k_{f'}\big)$ and then let
\best
d(f, f')\ =\    d_0(f,f')\ +\ \sum_{k=2}^\infty \ 2^{-k}\ \frac{d_k(f,f')}{1+d_k(f,f')}
\eest 
\begin{lemma} 
\label{A.dmetric}
$d$ is a metric on $\ov\M_{A,g, n}(X)$. 
\end{lemma}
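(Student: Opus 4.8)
The plan is to separate the verification into two parts: that $d$ is a pseudo-metric, which is essentially formal, and that it is positive-definite as a function on the moduli space, which is where all the content lies. For the first part: the series converges since each summand is bounded by $2^{-k}$; each $d_k$ ($k\ge 2$) is the pullback under $\Gamma^k$ of the Hausdorff distance $d_{\cal H}$ on the set of compact subsets of the compact metric space $\ov\M^G_{g,n+k}\ti X^k$ (Fact~(ii) above), hence is a pseudo-metric, and so is $d_0$ from Step~1; since $t\mapsto t/(1+t)$ is non-decreasing, subadditive and vanishes at $0$, it carries pseudo-metrics to bounded pseudo-metrics, so the sum $d$ is again a pseudo-metric. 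Because $\pi$, $\st$ and all the $\Gamma^k$ are invariant under reparametrization (Section~\ref{section4}), $d$ descends to $\ov\M_{A,g,n}(X)$. All that then remains is to show $d(f,f')=0\implies[f]=[f']$.

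Suppose $d(f,f')=0$. I would first unpack this: $d_0(f,f')=0$ forces $J=J'$, $\st(C_f)=\st(C_{f'})$ and $\Gamma_f=\Gamma_{f'}$, while $d_k(f,f')=0$ for $k\ge 2$ forces $\Gamma^k_f=\Gamma^k_{f'}$ as compact subsets of $\ov\M^G_{g,n+k}\ti X^k$; so in total the two maps have the same parameter and $\Gamma^k_f=\Gamma^k_{f'}$ for every $k\ge 1$. It therefore suffices to prove the reconstruction statement that a stable (perturbed, possibly $G$-decorated) map is determined up to reparametrization by its parameter together with the collection $\{\Gamma^k\}_{k\ge 1}$.

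To prove this, I would fix representatives $f:C\to X$ and $f':C'\to X$, take $k$ large, and choose enough non-special points $q_1,\dots,q_k$ on $C$, in general position, so that the enriched curve $(C;x_1,\dots,x_n,q_1,\dots,q_k)$ is a stable curve with trivial automorphism group, i.e.\ is domain-fine in the sense of Definition~\ref{defStableMap}; this is possible because adding marked points eventually stabilizes each component and generic choices kill automorphisms (compare Example~\ref{Ex.G}(f) for the $G$-decorated case). For such a choice the stabilization map is the identity on the enriched curve and the graph construction (\ref{graph.constr}) of $(f;q_1,\dots,q_k)$ is an embedding $F:C\hookrightarrow\ov\U^G_{g,n+k}\ti X$ sitting over the single point $\st(C;x,q)\in\ov\M^G_{g,n+k}$. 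Feeding one additional, varying point $z$ into $\Gamma^{k+1}_f$ while keeping $q_1,\dots,q_k$ fixed exhibits this embedded graph — for a generic value $\vec v=(f(q_1),\dots,f(q_k))\in X^k$ — as an \emph{intrinsically characterized} sub-locus of $\Gamma^{k+1}_f$, namely the part lying over domain-fine curves with prescribed $X^k$-prefix $\vec v$, exactly in the spirit of the proof of Lemma~\ref{L.d0=0}. Since $\Gamma^{k+1}_f=\Gamma^{k+1}_{f'}$, the same sub-locus occurs inside $\Gamma^{k+1}_{f'}$ and is an embedded graph over a domain-fine curve $(C';x',q')$; reading off the domain (as the fiber of the universal curve over that point) and the $X$-coordinate recovers $(C;x;q;f)$ and $(C';x';q';f')$ as isomorphic enriched maps, and forgetting the labels $q_i$ — with no contraction, since one is forgetting markings of a map, not stabilizing a curve — yields $(C;x;f)\cong(C';x';f')$, i.e.\ $[f]=[f']$. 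The degenerate ranges $A=0$, $2g-2+n\le 0$ would be handled along the same lines using the conventions of Remarks~\ref{R.ext.DM.unstable} and~\ref{R.ext.M.unstable}.

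I expect this last step to be the main obstacle. The graphs $\Gamma^1$, equivalently $d_0$, already separate domain-fine maps by Lemma~\ref{L.d0=0}; the higher graph maps $\Gamma^k$ are included precisely to recover the portion of a map carried by unstable domain components and to quotient out domain automorphisms, and the delicate points are choosing $k$ and the auxiliary points so that the enriched domain becomes domain-fine, and characterizing the relevant sub-locus of $\Gamma^{k+1}$ intrinsically enough that the set equality $\Gamma^{k+1}_f=\Gamma^{k+1}_{f'}$ can actually be brought to bear.
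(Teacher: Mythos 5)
Your proposal is correct and follows essentially the same route as the paper's proof: add auxiliary marked points so that the enriched domain is stable with trivial automorphism group, then use the vanishing of the higher graph pseudo-metrics $d_k$ to produce an enrichment of the second map with the same data and invoke Lemma~\ref{L.d0=0} to conclude the maps differ by a reparametrization. The paper compresses your ``intrinsic sub-locus of $\Gamma^{k+1}$'' discussion into the single assertion that $d_k(f,h)=0$ yields a collection $y$ with $d_0((f,x),(h,y))=0$, so your additional genericity argument for $\vec v$ (which is delicate for multiply covered maps, as you note) is an elaboration of, not a departure from, the paper's argument.
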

\begin{proof} Assume by contradiction that  $f, h\in \ov\M_{A,g, n}(X)$ such that $d_k(f, h)=0$ for all $k$ but $f$ is not a reparametrization of $h$. We can then add a finite collection of marked points $x$ to the domain of $f$ to ensure that its domain is stable with trivial automorphism group. Then $d_0(\Gamma_{(f, x)}, \Gamma^k_h)\leq d_k(f,h)=0$, and so $d_0((f, x), (h, y))=0$ for some 
collection $y$ of marked points.  But then by the previous lemma $(f, x)$ and $(h, y)$ must differ by a reparametrization, contradiction. \end{proof}

\medskip

\noindent{\sc Step 3. }   After symmetrizing the metric by the actions of the finite groups $G$ and $S_n$, we may assume that $d$ is invariant under the action of $G\ti S_n$.   It follows that  all  maps $\pi$, $se$,  $\phi_k$ and $\Gamma^k$ above are continuous.
 
\medskip

\noindent{\sc Step 4. }  The proof is completed by using  the following version of the Gromov Compactness Theorem.

\begin{theorem}
\label{A.GCT}
 Every sequence $\{f_n:C_n\to X\}$ of $J_n$-holomorphic maps with fixed arithmetic genus and number of marked points,  uniformly bounded energy, and with $J_n\to J$ in $\J^\ell$  has a subsequence that converges in $C^0$ up to reparameterization to a $J$-holomorphic map $f:C\to X$.
\end{theorem}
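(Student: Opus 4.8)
The plan is to reduce the statement to the two well-documented compactness phenomena for pseudo-holomorphic curves and then assemble them. First I would recall the a~priori estimates: since $J_n \to J$ in $\J^\ell \subset C^0$, the operators $\del_{J_n}$ have uniformly controlled zeroth-order terms, and the uniform energy bound $E(f_n) \le E$ together with the standard $\ep$-regularity / mean value inequality for $J_n$-holomorphic maps gives uniform local gradient bounds away from a finite ``bubbling set'' of at most $\lfloor E/\hbar \rfloor$ points (where $\hbar = \hbar(X,J_0,\w_0)>0$ is the minimal energy of a nonconstant $J$-holomorphic sphere, which is bounded below uniformly in a $C^0$-neighborhood of $J_0$). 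The arithmetic genus is fixed, so the number of nodes and irreducible components of the $C_n$ is bounded; after passing to a subsequence I may assume the combinatorial type of $C_n$ is constant, and hence the $C_n$ converge (after reparametrization of each component, using the $C^0$-topology on $\J^\ell$ to carry along the domain complex structures) to a nodal curve $C_\infty$, possibly after adding ghost bubble components to record bubbling and to stabilize.

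Next I would run the standard bubbling-off analysis on each component. On the complement of the bubbling points, Gromov's elliptic bootstrapping (using that $W^{\ell,p}\hookrightarrow C^0$ for $\ell p>2$, so the $J_n$ converge strongly enough) gives $C^\infty_{loc}$ convergence to a $J$-holomorphic map; at each bubbling point a rescaling argument produces one or more nonconstant $J$-holomorphic spheres, each carrying energy $\ge \hbar$, so the process terminates. The ``no energy loss in the neck'' lemma — proved exactly as in Chapter~4--5 of \cite{ms2}, which goes through verbatim because it only uses the isoperimetric inequality and the $\hbar$-lower bound, both uniform here — ensures that the limiting stable map $f\colon C \to X$ satisfies $E(f) = \lim E(f_n)$ and represents the same homology class, so $f$ is a genuine stable map with the prescribed arithmetic genus. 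Stability of the limit is automatic since ghost components are attached only where they carry $\ge 3$ special points, and components on which $f$ is nonconstant are stable by the energy lower bound.

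Finally, I would upgrade ``convergence in the Gromov sense'' to ``convergence in $C^0$ up to reparametrization'' in the precise meaning needed for the metric $d$ of Lemma~\ref{A.dmetric}: namely that $\pi(f_n)\to \pi(f)$ (immediate, since $J_n\to J$), $\st(C_n)\to \st(C)$ in $\ov\M_{g,n}$ (this is the Deligne--Mumford compactness for the stabilized domains, which follows from the constancy of combinatorial type after passing to a subsequence plus continuity of the degeneration parameters), and $d_{\cal H}(\Gamma_{f_n},\Gamma_f)\to 0$ together with $d_{\cal H}(\Gamma^k_{f_n},\Gamma^k_f)\to 0$ for each $k$ (this is where $C^0_{loc}$-convergence on the regular part plus the graphs of the bubbles, assembled, give Hausdorff convergence of the images in $\ov\M_{g,n+1}^G\times X$ and its higher analogues). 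I expect the main obstacle to be the neck analysis / no-loss-of-energy step carried out uniformly in $n$ with the domains themselves degenerating and only $C^0$ control on $J_n$: one must be careful that the reparametrizations chosen on the thin parts of $C_n$ are compatible with the fixed embedding $\ov\U^G_{g,n+1}\hookrightarrow \P^M$ used to define the graphs $\Gamma^k_f$, so that Hausdorff convergence of graphs — rather than merely Gromov convergence of maps — actually holds; the remaining ingredients are routine adaptations of \cite{ms2} and of the relative Gromov compactness in \cite{i-nor}.
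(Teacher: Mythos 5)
The paper does not actually prove Theorem~\ref{A.GCT}: it is stated in Step~4 of the proof of Theorem~\ref{3.TopTheorem} as ``the following version of the Gromov Compactness Theorem'' and is used as a known input, with the surrounding framework credited to \cite{ms2} (the theorem it extends is Theorem 5.6.6 there) and the low-regularity elliptic estimates to \cite{is}. So there is no in-paper argument to match; your sketch is an attempt to supply the standard proof that the paper treats as citable, and its overall shape (energy quantization, bubbling, no loss of energy in the necks, Deligne--Mumford convergence of stabilized domains, reassembly into Hausdorff convergence of the graphs $\Gamma^k$) is the right one.

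Two points would need repair before this is a proof. First, fixed arithmetic genus does \emph{not} bound the number of nodes or irreducible components of $C_n$: arbitrarily long chains or trees of rational components leave the arithmetic genus unchanged. The bound comes from stability of the maps together with the uniform energy bound and the quantization constant $\hbar$ (nonconstant components carry energy at least $\hbar$, and stable ghost components are counted combinatorially), so you must either invoke stability explicitly --- which is how the theorem is applied, to $\oM^{G,E}_{g,n}(X)$ --- or first contract ghost components. Second, since $J_n\to J$ only in $\J^\ell$, i.e.\ in $W^{\ell,p}$ with $\ell\ge 1$, $p>2$, your claims of $C^\infty_{loc}$ convergence away from the bubbling set and of the \cite{ms2} neck analysis going through ``verbatim'' overstate what is available: the mean value inequality and the bootstrapping have to be run at $W^{\ell,p}$ regularity as in \cite{is}, which is precisely why the theorem asserts only $C^0$ convergence (the upgrade to higher regularity for smooth $J$ is carried out separately in the appendix, in the discussion preceding Corollary~\ref{A.cor}). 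With those adjustments your outline coincides with the standard argument the paper is implicitly invoking.
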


Because $C^0$ convergence implies the convergence of the graphs $\Gamma^k_{f_n}$, this immediately shows that $\pi: \oM^{G, E}_{g,n}(X)\ra \J^\ell$  is proper.   The maps  $\phi_k$ in  (\ref{3.naturalmaps}) are also proper because their fibers are closed subsets of $C^k$.   Furthermore,  $\phi_k$ is perfect:  it is continuous, surjective  and, by an observation of Palais \cite{pal}, a  proper continuous map to a metric space is closed.  Thus all parts of Theorem~\ref{3.TopTheorem}  hold.
\qed

\bigskip

Rather surprisingly, convergence in the metric of Theorem~\ref{3.TopTheorem} implies convergence in the Gromov topology with higher regularity.  For a precise statement, a brief diversion is necessary to clarify what convergence with higher regularity means in the present context.

Suppose  $f_m\in\ov\M^E(X)$ satisfies $d(f_m, f_0)\ra 0$.  Then, as in the proof of Lemma \ref{A.dmetric} we can add marked points to the domain of $f_0$ to  obtain an $n+k$-marked  stable curve $C_0$ with $\Aut\; C_0=1$.  The assumption that $d^k(f_m,f_0)\to 0$ then means that one can choose $k$ marked points to make the domain of each $f_m$ an $n+k$ curve $C_m$.  By the semicontinuity properties used in the proof of Lemma~\ref{JstableOpenLemma}, the $C_m$ are stable with $\Aut\; C_m=1$ for large $m$.  We can then regard the maps $f_m$ and $f_0$ as maps defined on  fibers of the universal curve over  an open neighborhood $U$ of $[C_0]$ in $\ov\M_{g,n+k}$.

Now for each compact set $K$ in $C_0\setminus \{nodes\}$ we can find  a fiber-preserving biholomorphic  map
$$
\tau:    K\ti U \to  \ov\U_K
$$
where   $\ov\U_K$ is the  portion of the universal curve over $U$ and outside a fixed open neighborhood of the nodal set ${\cal N}\subset \ov\U$.  Then $g_m=f_m\circ \tau:K\to X$ is a sequence of maps with the same domain.

 Elliptic regularity results now apply.  In particular,  if $\{f_m\}$ is a sequence of $J_m$-holomorphic maps from $K$ to $X$ with $f_m\to f_0$ in $C^0$ and $J_m \to\J_0$ in $W^{\ell,p}$ for $\ell\geq 1$ and $\ell p>2$,   then $f_m\to f_0$ in $W^{\ell,p}$ \cite{is}.   This applies for all $\ell\geq 1$ and all compact sets.

  Letting $\J^\infty(X)$ denote the space $\bigcap_\ell \J^\ell(X)$ of smooth tame structures on $X$, we conclude that if $J_m\to J_0$ in $C^\infty$ then $f_m\to f_0$ in $C^0$, in  Hausdorff distance  and in $C^\infty$ on every compact set $K\subset C_0\setminus \{nodes\}$, which is the standard definition of Gromov convergence used in the literature.  Thus we have:
 
 \begin{cor}
 \label{A.cor}
The topology on $\ov\M^E(X)$ over $\J^\infty(X)$ given by Theorem~\ref{3.TopTheorem}  is equivalent to the usual Gromov topology.
 \end{cor}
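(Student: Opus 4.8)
The plan is to show that, over $\J^\infty(X)$, the metric topology furnished by Theorem~\ref{3.TopTheorem} and the usual Gromov topology have exactly the same convergent sequences (with the same limits); since both are metrizable --- the first by Lemma~\ref{A.dmetric}, the second by the standard theory (see, e.g., \cite{ms2}) --- and a metrizable topology is determined by its convergent sequences, this identifies the two topologies. So the argument splits into the two implications ``$d$-convergence $\Rightarrow$ Gromov convergence'' and ``Gromov convergence $\Rightarrow$ $d$-convergence''. The restriction to the energy sublevel $\ov\M^E(X)$ is what makes the Gromov Compactness Theorem~\ref{A.GCT} applicable throughout.

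The first implication is essentially the content of the paragraphs preceding the corollary, and I would simply organize it: given $f_m\to f_0$ in the metric $d$ with $J_m\to J_0$ in $C^\infty$, one uses $d_k(f_m,f_0)\to 0$ to add $k$ marked points making every domain $C_m$ and the limit domain $C_0$ stable with trivial automorphism group, realizes the maps as maps on fibers of the universal curve over a neighborhood of $[C_0]$ in $\ov\M_{g,n+k}$, and invokes the interior elliptic regularity of \cite{is} to promote the $C^0$-convergence to $C^\infty$-convergence on every compact subset of $C_0\setminus\{\mathrm{nodes}\}$; combined with $\st(C_m)\to\st(C_0)$ and the Hausdorff convergence $\Gamma_{f_m}\to\Gamma_{f_0}$ already encoded in $d_0$, this is exactly Gromov convergence to $f_0$. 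For the converse I would check termwise that Gromov convergence $f_m\to f_0$ drives $d(f_m,f_0)\to 0$: the $d_\J$-term vanishes since $J_m\to J_0$, the $d_{\ov\M_{g,n}}$-term vanishes since the stabilized domains converge in Deligne--Mumford space, and each $d_k(f_m,f_0)=d_{\cal H}(\Gamma^k_{f_m},\Gamma^k_{f_0})$ vanishes because Gromov convergence of the maps together with convergence of the $(n+k)$-pointed domains forces the compact sets $\Gamma^k_f=se(\phi_k^{-1}(f))$ to converge in Hausdorff distance; since the series tail in the definition of $d$ is uniformly small, $d(f_m,f_0)\to 0$.

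The delicate point in both directions is the behavior near forming nodes: one must verify that the graphs $\Gamma^k_f$ --- which, being closures of $se$ over all ways of adjoining $k$ marked points, already absorb the images of any bubble components of a Gromov limit --- converge in Hausdorff distance exactly when the underlying maps converge in the Gromov sense. Here I would use the properness and perfectness statements of Theorem~\ref{3.TopTheorem}(b): properness of $\phi_k$ extracts from any subsequence a Gromov-convergent subsequence of $(n+k)$-pointed representatives, and the energy identity $E(f_0)=\lim_m E(f_m)$ (valid since all maps represent the fixed class $A$) rules out energy escaping to the nodes, so no extra graph pieces appear in the limit beyond those already present in $\Gamma^k_{f_0}$; this is also what guarantees, via Lemma~\ref{L.d0=0}, that the map reconstructed from the convergent graphs in the first direction is genuinely the metric limit $f_0$ up to reparametrization. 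With both implications in hand, the identity map on $\ov\M^E(X)$ is sequentially continuous in each direction, hence --- both spaces being metrizable --- a homeomorphism, which proves the corollary. No analytic input beyond Theorem~\ref{A.GCT} and the elliptic regularity already cited is required.
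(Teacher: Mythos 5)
Your proposal is correct and follows essentially the same route as the paper: the nontrivial implication (convergence in the metric $d$ implies Gromov convergence) is handled exactly as in the text preceding the corollary, by using $d_k(f_m,f_0)\to 0$ to add marked points killing automorphisms, viewing the maps as defined on fibers of the universal curve near $[C_0]$, and upgrading $C^0$-convergence to $C^\infty$-convergence away from the nodes via the elliptic regularity of \cite{is}. The converse implication and the ``metrizable topologies are determined by their convergent sequences'' framing, which you spell out, are left implicit in the paper, so your write-up is simply a slightly more complete version of the same argument.
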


 It is straightforward to extend these results to  decorated and relative versions of the moduli spaces.

\bigskip

\subsection{The stratification of the universal moduli space}

The universal moduli space $\ov \M(X)$ is stratified by the topological type $t$ of its elements $(f, C)$.  As in \cite{ms2}, each type can be represented by the dual graph whose vertices, which  correspond to irreducible component $C_i$ of $C$,  are  labeled  by a  genus, number of marked points, and the homology class $A_i=[f(C_i)]$. For relative moduli spaces, one also keeps track of the contact information to $V$.  The types are partially ordered:  $t \prec t'$ if $t'$ is obtained from $t$ by smoothing a node.

Each  graph type $t$ defines a open  stratum $\M_t$ of the universal moduli space, and we set $\oM_t=\bigcup_{\tau\preceq t} \M_{\tau}$.    Fix $E$ and let  $\cal T_{U,E}$ denote the collection of all topological types that are represented by maps $f$  in the universal moduli space $\ov\M^{U,E}(X)\to U$ over $U\subset \J$ with  $\w(A)\le E$ and with $3g-3+n \le E$. 
Gromov compactness implies $\cal T_{U, E}$ is finite for any $U$ with compact closure in $C^0$. It also implies the following fact.
\begin{lemma}
\label{L.top.not.jumps} 
Each $J\in \J(X)$  has a $C^0$ neighborhood  $U$ in which
\best
\cal T_{U, E}= \cal T_{J, E}
\eest
In particular, every triple $(A, g, n)$ represented by a map in $\ov\M^{U, E}(X)$ is already represented by one in  $\ov\M^{J, E}(X)$.  
\end{lemma}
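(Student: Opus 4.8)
The plan is to prove Lemma~\ref{L.top.not.jumps} by a contradiction argument based on Gromov compactness (Theorem~\ref{A.GCT}), exploiting the upper semicontinuity of degeneration. First I would observe that it suffices to establish the stable direction of the inclusion of topological types: if a type $t$ is represented by some $f\in\ov\M^{J,E}(X)$, then small perturbations of $J$ still admit maps of nearby type, so generically $\cal T_{J,E}\subseteq \cal T_{U,E}$ holds \emph{after} we show that no type is lost — but what we actually need is the reverse containment $\cal T_{U,E}\subseteq \cal T_{J,E}$, i.e., that passing to a $C^0$-neighborhood creates no genuinely new types. Actually, since degenerating a curve can only make the dual graph ``more degenerate'', the finite poset of types in $\cal T_{J,E}$ already contains all the specializations; the content of the lemma is that nearby $J$ do not produce a type that is \emph{transverse} to (neither a specialization nor a generization of) anything in $\cal T_{J,E}$, and in particular produce no new triple $(A,g,n)$.

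The core argument I would carry out is as follows. Suppose not: then there is a sequence $J_m\to J$ in the $C^0$ topology (one may take $J_m$ in ever-smaller $C^0$-balls around $J$) and $J_m$-holomorphic maps $f_m\in\ov\M^{J_m,E}(X)$ whose topological type $t_m$ lies in $\cal T_{U,E}\setminus\cal T_{J,E}$ — and, by finiteness of $\cal T_{U,E}$ for any precompact $U$ (Gromov compactness), after passing to a subsequence we may assume $t_m=t$ is a single fixed type not in $\cal T_{J,E}$. The energy bound $E(f_m)=\w(A)\le E$ together with the genus bound and the arithmetic-genus constraint $3g-3+n\le E$ put $\{f_m\}$ in the hypothesis of Theorem~\ref{A.GCT}. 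Applying it (in its relative/decorated form via the extension noted after Corollary~\ref{A.cor}), a subsequence converges in $C^0$, up to reparametrization, to a $J$-holomorphic stable map $f\in\ov\M^{J,E}(X)$. Then I would invoke exactly the semicontinuity facts used in the proof of Lemma~\ref{JstableOpenLemma} and Lemma~\ref{A.dmetric}: under Gromov convergence the homology class, arithmetic genus and number of marked points are preserved, the components of $f$ carry classes that are sums over the degenerating components of the classes $A_i$ of $f_m$, and $\Aut$ is upper semicontinuous. Consequently the type $t$ of the $f_m$ is a \emph{generization} of the type $t_\infty$ of the limit $f$ — equivalently $t_\infty$ is obtained from $t$ by further nodal degeneration/contraction — so $t$ sits in the ``generization closure'' of $t_\infty\in\cal T_{J,E}$. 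The remaining point is that $\cal T_{J,E}$ is already closed under generization within the energy window: any type obtained from a type in $\cal T_{J,E}$ by smoothing nodes, while keeping the total class, genus and marked points fixed, is still represented in $\ov\M^{J,E}(X)$ — because the moduli space $\ov\M^{J}_{A,g,n}(X)$, being the $J$-fiber, contains the relevant stratum closures and the energy/genus bounds are unchanged under smoothing. This contradicts $t\notin\cal T_{J,E}$. For the ``in particular'' clause, the triple $(A,g,n)$ of any $f_m$ equals that of the Gromov limit $f$, which lies in $\ov\M^{J,E}(X)$, so no new $(A,g,n)$ appears.

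I expect the main obstacle to be the precise bookkeeping that $\cal T_{J,E}$ is closed under the relevant generizations — that is, pinning down the statement ``if a type $t_\infty$ appears in $\ov\M^{J,E}(X)$ and $t$ specializes to $t_\infty$, then $t$ also appears in $\ov\M^{J,E}(X)$,'' uniformly across the decorated and relative cases (twisted $G$-covers, contact data along $V$, building levels of $X_m$). One has to check that smoothing a node of a $J$-holomorphic stable map and redistributing the homology classes back onto fewer components does not violate $3g-3+n\le E$ or $\w(A)\le E$ (it does not, since both are invariant), and that the decorations deform along — for $G$-covers this is the balancing/monodromy compatibility, for relative maps the matching conditions. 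An alternative, cleaner route that sidesteps part of this is to argue directly that the \emph{set} of types is ``locally constant from below'': every type in $\cal T_{J,E}$ obviously persists to nearby $J$ (its stratum in $\ov\M^U(X)$ is nonempty near $J$ — this needs only that the stratum is not isolated, which again follows since $\pi$ restricted to $\ov\M^{U,E}$ is proper and the fiber over $J$ is nonempty there), while the contradiction argument above shows no new type is gained; hence equality holds on a neighborhood. I would phrase the final writeup around this dichotomy — ``no type lost'' (continuity/properness) and ``no type gained'' (Gromov compactness plus semicontinuity) — and relegate the generization-closure verification to a short remark citing the stratification description in the preceding subsection and the compactness theorems of \cite{IP1}, \cite{i-nor}.
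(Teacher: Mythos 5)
Your overall skeleton — argue by contradiction, pass to a subsequence of $J_m\to J$ whose maps $f_m$ all have one fixed type $t\notin \cal T_{J,E}$ (using finiteness of types below energy $E$), apply Gromov compactness to extract a $J$-holomorphic stable limit, and read off the contradiction from the limit — is exactly the paper's argument, including the observation that the triple $(A,g,n)$ survives to the limit, which gives the ``in particular'' clause. The paper spends its effort on the subsequence step (uniform energy quantum for spheres, bound on unstable components, fixing the homeomorphism type, automorphism group and componentwise homology classes before invoking compactness), which you compress but do not get wrong.

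The genuine gap is in your final step, the ``generization closure'' of $\cal T_{J,E}$. You claim that if a more degenerate type $t_\infty$ is represented by a $J$-holomorphic map, then the smoothed type $t$ is also represented by a $J$-holomorphic map \emph{for the same fixed $J$}, justifying this by saying the $J$-fiber ``contains the relevant stratum closures.'' Stratum closures give exactly the opposite: closure under specialization (degeneration), not under smoothing of nodes. Smoothing a node of a $J$-holomorphic map at a fixed, possibly non-regular $J$ requires a gluing theorem with transversality and is false in general (a nodal $J$-curve need not deform to a smooth one unless $J$ is generic), so this step would fail, and with it your contradiction. The paper does not need any such statement because its types are tracked through the \emph{closed} strata $\ov\M_t$: ``$t$ is represented over $J$'' is read as non-emptiness of $\ov\M_t$ over $J$, and the Gromov limit $f_0$ — even if its exact dual graph is more degenerate than $t$ — lies in $\ov\M_t$, which is already the desired contradiction, and in any case has the same $(A,g,n)$, which is all that is used later (for $\C_{E,B}$ in Section 8). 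Separately, your ``no type lost'' discussion is an unnecessary complication: $\cal T_{J,E}\subset\cal T_{U,E}$ holds simply because $J\in U$; no persistence of strata to nearby $J$ is needed (and such persistence is not true in general).
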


\begin{proof}
The inclusion $\cal T_{J, E} \subset \cal T_{U, E}$ is obvious. For the other inclusion, assume the contrary: that there is a $C^0$-convergent sequence   $J_k\ma\ra J_0$ and maps $f_k:C_k\ra X$  in $\oM_t^{J_k, E}(X)$ for some $t$ with $\oM_t^{J, E}(X)$ empty.  The inequality $3g_k-3+n_k\le E$ gives a uniform bound on the topological type of $\st(C_k)$. We also have the energy   bound $E(f_k)=\w(A)\le E$ and  a uniform lower bound $\al>0$ on the energy of each non-constant $J_k$-holomorphic sphere. Since $f_k$ is a stable map,  this uniformly bounds  the number of unstable rational components of $C_k$. Hence there are finitely many possible domain types  for  $C_k$ (unstable genus 1 components  can occur only if $C_k$ is smooth, genus 1 with no marked points).  

 By Gromov compactness a   subsequence of the  $f_k$ converges in $C^0$, in homology,  and in energy to  a $J$-holomorphic stable limit $f_0:C_0\ra X$ that is in $\oM_t^{J,E}$; contradiction.    
\end{proof} 

\bigskip

 \subsection{Adapting $J$ to a normal crossing divisor}

 In Section~8 we introduced a stabilizing divisor $D$, replaced the normal crossing divisor $V$ with $V\cup D$, and replaced $J$ by  a nearby $J'$  compatible with $V\cup D$.  This subsection explains how to find such a $J'$.  It begins at the level of linear algebra.

Fix  a vector space $W$  of dimension $2n$  with a hermitian structure $(\w, J, g)$.   The Grassmann manifold $Gr_{2\ell}$ of  codimension~$2\ell$ subspaces  of $W$ is compact and has  a  canonical  Riemannian metric on $Gr_{2\ell}$ induced by the metric $g$ on $W$. Furthermore, 
the subset  $Gr_{2\ell}^J$  of the $J$-invariant subspaces is a submanifold.

  \begin{defn} 
We say that    $V \in Gr_{2\ell}$ is {\em $\ep$-holomorphic} if  it  lies in the $\ep$-tubular neighborhood of $Gr_{2\ell}^J$.
\end{defn} 
Similarly, an ordered configuration $V= \{ V_i\}$ of $k$  codimension 2 linear subspaces of $W$ is a point in the product $G_k(W)=Gr_{2}\times\cdots\times Gr_{2}$ and  $V$  is $\ep$-holomorphic if it lies in the $\ep$-neighborhood of the submanifold 
\bear
\label{A.GrassmannProduct}
G^J_k(W)\ =\ Gr^J_{2}\times  \cdots \times Gr^J_{2}\ \subset \ G_k(W)
\eear
of  $J$-invariant configurations.    It is also useful to consider the singular variety $S\subset G_k(W)$ of configurations not in general position, and say that
 $V$ is {\em $\alpha$-general} if $\mbox{dist}(V,S)\ge \alpha$.  Observe that, because $G_k^J(W)$ is a submanifold, there are constants $c_1, \ep_0>0$ so that if $V$ is an $\ep$-holomorphic  configuration with $\ep<\ep_0$ then there is a $J$-invariant configuration $V'$ within distance $c_1\ep$ of $V$, and in fact $V$ is also $\omega$-symplectic. Moreover, if $V$ is $\al$-general and $\ep$ is sufficiently small (depending on $\al$) then the complex configuration $V'$ is $\al/2$-general.

It is useful to think of  configurations as assembled from 2-dimensional subspaces in the following way. 

 \begin{defn} Let $V= \{ V_i\}$  be a configuration of $k$ codimension 2  subspaces of $W$ in general position and with common  intersection $V_{all}=\cap_i V_i$.    An  {\em adapted splitting}  of $V$ is $V_{all}$ together with a set of $k$  2-dimensional complementary subspaces $N_i$  such that   
  \bear\label{V.split}
W= V_{all}\oplus \oplus_{i}  N_i
\eear 
and  for any $I\sqcup I'\subset\{ 1, \dots, k\}$, 
 \bear\label{split.W}
 \ma\cap_{i\in I} V_i\ =\ (\ma\cap_{i\in I\sqcup I'} V_i)\oplus \ma\oplus_{j\in I'}N_j.
\eear
\end{defn} 

Adapted splittings  always exist. For example, one can choose an invertible linear transformation $L:W\to \cx^n$ such that $L(V)$ is the standard configuration $V^0$ of the first $k$ complex coordinate hyperplanes $\{x^i=0\}$ in $\cx^n$ and take $N_i$ to be the inverse image of  the complex line in the $i$ direction (these complex lines form the standard splitting $N^0$ of $V^0$).   An adapted splitting (\ref{V.split}) is called $J$-invariant if $V_{all}$ and each $N_i$ are $J$-invariant;  it follows that each $V_i$ and all intersections (\ref{split.W}) are $J$-invariant.   
Thus there is a dual perspective:  given a general point $N=(V_{all}, N_1, \dots, N_k)$ in the 
Grassmann
$$
\widehat{G}_k(W)\ =\ Gr_{2k}\times Gr_{2n-2}\times \cdots \times Gr_{2n-2}
$$
we obtain a configuration $V=\{V_i\}$ where
\bear
\label{A.constructVi}
V_i\ =\ V_{all} \cup \underset{j\not= i}{\oplus}   N_j.
\eear
Again $\widehat{G}_k(W)$ contains  a submanifold $\widehat{G}^J_k(W)$ of $J$-invariant subspaces and a variety $\widehat{S}$ of splittings not in general position, and we say $N\in \widehat{G}_k(W)$  is $\ep$-holomorphic if it lies in the $\ep$ neighborhood of $\widehat{G}^J_k(W)$ and is $\alpha$-general if it lies outside the $\alpha$-neighborhood of $\widehat{S}$.

\begin{lemma}
\label{etatrans-ephololemma} Fix a hermitian vector space $(W, \om, J, g)$ and $\al>0$. Then there exist constants $c_\al, C_\al$ and $\ep_0>0$  with the following property: for every     $\al$-general and $\ep$-holomorphic configuration $V\subset W$ with $\ep<\ep_0$,  there exists a $V$-adapted splitting $N_V=\{N_i\}$ that is $c_\al$-general and $C_\al\ep$-holomorphic.
\end{lemma}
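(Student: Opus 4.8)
The plan is to pass from the linear-algebra statement about a single configuration in $W$ to a statement that is uniform over all $\al$-general, $\ep$-holomorphic configurations, using compactness of the relevant Grassmannians together with the fact that the set of adapted splittings of a fixed general configuration is parametrized by a (nonempty) manifold that depends continuously on the configuration. First I would make precise the parameter space: let $\cal P\subset G_k(W)\ti \widehat G_k(W)$ be the set of pairs $(V,N)$ such that $N=(V_{all},N_1,\dots,N_k)$ is an adapted splitting of the configuration $V$ in the sense of (\ref{V.split})--(\ref{split.W}). The projection $p:\cal P\to G_k(W)$ to the first factor is surjective over the open set of general configurations (adapted splittings always exist, as noted via the linear transformation $L$), and $p$ is a proper submersion onto that open set with compact fibers; moreover $p$ restricts to a map $p^J:\cal P^J\to G_k(W)$ between the $J$-invariant loci, where $\cal P^J$ consists of pairs $(V',N')$ with $V'\in G_k^J(W)$ and $N'$ a $J$-invariant adapted splitting.

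Next I would exploit that both $G_k^J(W)$ and $\cal P^J$ are compact submanifolds of the respective compact Grassmannians. The key step is the following uniform lifting statement: there are constants $\ep_0>0$ and $C>0$, depending only on $(W,\om,J,g)$ and on the geometry of these submanifolds, so that whenever $V$ is within $\ep<\ep_0$ of $G_k^J(W)$, any $J$-invariant configuration $V'$ achieving (or nearly achieving) $\mathrm{dist}(V,G_k^J(W))$ and any choice of $J$-invariant adapted splitting $N'$ of $V'$ can be perturbed to an adapted splitting $N_V$ of $V$ itself with $\mathrm{dist}(N_V,N')\le C\ep$. This is where one uses the implicit function theorem: the defining equations (\ref{V.split})--(\ref{split.W}) cut out $\cal P$ as a smooth manifold near $\cal P^J$ with surjective differential in the fiber directions of $p$, so the fiber $p^{-1}(V)$ is nonempty and within $C\ep$ (in Hausdorff distance) of $p^{-1}(V')$, hence contains a point within $C\ep$ of the given $J$-invariant splitting $N'\in p^{-1}(V')$. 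One then sets $N_V$ to be that point, which is automatically a $V$-adapted splitting and is $C\ep$-holomorphic since $N'$ is $J$-invariant.

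The genericity ($c_\al$-general) conclusion is handled separately but by the same compactness philosophy. On the compact set of $\al$-general configurations (those at distance $\ge\al$ from the singular variety $S\subset G_k(W)$), one observes that a $J$-invariant adapted splitting $N'$ of the nearby complex configuration $V'$ is automatically bounded away from $\widehat S$ by a constant $c'_\al>0$: indeed $V'$ is $\al/2$-general for $\ep$ small by the remark in the text, and the map sending a general configuration to (the closure of) its set of adapted splittings is continuous with compact-and-nonempty values, so the distance of $N'$ to $\widehat S$ is bounded below on the compact $\al/2$-general locus. Then $N_V$, being within $C\ep$ of $N'$, is $c_\al$-general with $c_\al=c'_\al/2$ once $\ep_0$ is shrunk so that $C\ep_0<c'_\al/2$. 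Collecting the constants: take $\ep_0$ as the minimum of the thresholds appearing above, $C_\al=C$, and $c_\al=c'_\al/2$.

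The main obstacle I anticipate is making the uniform implicit-function-theorem estimate genuinely uniform in $V$ — that is, verifying that the constant $C$ in the lifting step does not degenerate as $V$ ranges over the (noncompact-looking, but actually relatively compact within a tube) set of $\ep$-holomorphic general configurations. This is resolved by noting that all the action takes place in an $\ep_0$-tube around the compact submanifold $\cal P^J$, where the transversality of the equations (\ref{V.split})--(\ref{split.W}) and the norms of the relevant inverse linearizations are uniformly bounded by compactness; the $\al$-generality is precisely what keeps $V$ (and $V'$) away from $S$ so that the fiber structure of $p$ remains a submersion with controlled constants. Once this uniformity is in hand, everything else is routine estimation with the triangle inequality.
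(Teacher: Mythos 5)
Your overall strategy (pass to the nearby $J$-invariant configuration $V'$, transfer a splitting back to $V$, and extract uniform constants from compactness) is in the right spirit, but the two places where the uniform constants are supposed to come from do not hold as stated. First, the space $\mathcal{P}$ of pairs $(V,N)$ does not have the compactness you use: being an adapted splitting is an open transversality condition, so the fiber $p^{-1}(V)$ is a noncompact cell whose closure meets the degenerate locus $\widehat{S}$ (already for $k=1$ the complements of a fixed codimension-two subspace form an affine chart of the Grassmannian), and likewise $\mathcal{P}^J$ is not a compact submanifold. Consequently $p$ is not proper with compact fibers, and the assertion that the inverse linearizations in your implicit-function-theorem step are \emph{uniformly} bounded ``by compactness of $\mathcal{P}^J$'' is unfounded: those norms degenerate as the splitting approaches $\widehat{S}$, and $\al$-generality constrains only $V$, not the choice of splitting. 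Second, and for the same reason, the genericity step fails: it is not true that a $J$-invariant adapted splitting $N'$ of the $\al/2$-general $V'$ is ``automatically'' at distance $\ge c'_\al$ from $\widehat{S}$ --- every general configuration admits adapted splittings arbitrarily close to $\widehat{S}$, so the set-valued map you invoke does not have compact values and no uniform lower bound on $\mathrm{dist}(N',\widehat{S})$ follows. Since both your constant $C$ (lifting step) and $c'_\al$ (genericity step) rest on these claims, the conclusion that $N_V$ is $c_\al$-general and $C_\al\ep$-holomorphic with constants depending only on $\al$ is not established.

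What is missing is the choice of one \emph{specific} splitting with uniformly controlled geometry, and this is exactly how the paper argues: by $\al$-generality (and compactness of the $\al$-general locus) one can choose a linear isomorphism $L$ carrying $V$ to the standard coordinate configuration $V^0$ with $|L|,|L^{-1}|\le C'_\al$, and one sets $N_V=L^{-1}(N^0)$, the pullback of the standard splitting; this is adapted by construction (it satisfies (\ref{V.split}) and (\ref{split.W})) and is $c_\al$-general with $c_\al$ depending only on $\al$. For the holomorphicity estimate one runs the same construction with a complex-linear $L$ for the $J$-invariant configuration $V'$ at distance $\le c_1\ep$ from $V$, and joins the two constructions by a path $L_t$ of linear maps of length $O(\ep)$; this carries $N_V$ to a $J$-invariant adapted splitting of $V'$ at distance $\le C_\al\ep$, so $N_V$ is $C_\al\ep$-holomorphic. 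If you prefer to keep your implicit-function-theorem formulation, you must first make such a uniformly general choice of $N'$ and then restrict to the subset of $\mathcal{P}^J$ on which the splitting is at least $c_\al$-general, which \emph{is} compact; only on that set are the relevant constants uniform.
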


\begin{proof} 
Whenever $V$ is $\al$-general we can find a linear transformation $L$ that takes $V$ into the standard configuration $V^0$ with norms $|L|$ and $|L^{-1}|$  bounded by a constant  $C'_\al$ depending on $\al$ but independent of $V$. Pulling back   the standard splitting $N^0$  gives an adapted splitting $L^{-1}(N^0)$ for $V$ that is  $c_\al$-general for a constant $c_\al$ independent of $V$.  If $V$ is $J$-invariant, we can find a complex linear transformation $L$ that  pulls back $N^0$ to a $J$-invariant $c_\al$-general splitting $N_V$. Finally, if $V$ is only $\ep$ $J$-holomorphic, for $\ep$ sufficiently small (depending on $\al$)  homotoping $V$ to the nearby $J$-invariant $V'$ gives a path $L_t$ of linear transformations of length $c_1\ep$ and so a homotopy from the splitting $N_V$ to a $J$-invariant one $N_{V'}$ of length $C_\al \ep$. \end{proof}
\medskip

Now consider a compact manifold $X^{2n}$ with an almost K\"{a}hler structure
$(\w, J, g)$ and a topological normal crossing divisor $V=\{V_i\}$, that is,
assume that $V$ satisfies Definition~1.3 of \cite{i-nor} without the
requirement that $V$ be $J$-holomorphic.  In particular, the branches of $V$
are in general position and each intersection $V_I=V_{i_1}\cap\cdots\cap V_{i_k}$  is a submanifold. Also fix an adapted splitting $N_V$ (this is called the ``normal bundle to $V$'' in   \cite{i-nor}). The local model of $X$ near a point $p\in V_I$ is then described via a local diffeomorphism sending the branches $V_i$ into the $i^{\mbox{\tiny th}}$  coordinate planes in $\cx^n$ as in \cite{i-nor}, but without any compatibility conditions with $(\om, J)$. 

Along each depth $k$ stratum $V_I$ where $k$ branches of $V$ meet, the restriction of the configuration $N_V$ to  $V_I$ defines a smooth section of the Grassmann bundle $\widehat G_k(TX)$ over $V_I$  that  lies in the subbundle $\widehat G^J_k(TX)$ at each point $p\in V_I$ where the configuration $N_V$ and therefore $V$  is $J$-holomorphic. In fact, using the local models, we can extend the splitting (\ref{V.split}) over a neighborhood of $V_I$ (by extending smoothly and then projecting),  making the extension agree with the existing splitting on those  strata $V_{I'}$ whose closure contains $V_I$.    This gives sections $\sigma_I$ defined on a neighborhood of  $V_I$ for each $I$, and these are compatible:   if $I'\subset I$ then $V_{I}$ lies in the closure of  $V_{I'}$, and on the intersection of  their tubular neighborhoods there is a forgetful map  (that forgets the branches $V_i$ for $i$ in $I\setminus I'$) 
\bear
\label{A.Gkcompatible}
\widehat G_{|I|}(TX) \hookrightarrow \widehat G_{|I'|}(TX)
\eear
 which takes $\sigma_{I}$ to $\sigma_{I'}$.

\begin{prop} 
\label{0-jetlemma} 
Assume $(X, J, g, \om)$ and $\al>0$ is fixed. Then there exist constants 
$C_\al, \ep_0>0$ with the following property: for any $\ep<\ep_0$ and any topological normal crossing divisor $(V, N_V)$  in $X$ that is $\al$-general and $\ep$-holomorphic there is   an almost complex structure $J_V$ on $X$ with $|J-J_V|\le C_\al\ep$ such that
\begin{enumerate}[(a)]
\item $(V, N_V)$ is $J_V$-holomorphic, and
\item  $J_V$ is $V$-compatible in the sense of \cite{IP1} and \cite{i-nor}.
\end{enumerate}
 \end{prop}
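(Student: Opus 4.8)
The plan is to prove Proposition~\ref{0-jetlemma} by reducing it to the linear-algebra statement of Lemma~\ref{etatrans-ephololemma}, first pointwise and then globally using a partition of unity, and finally checking the 1-jet condition. Since $(V,N_V)$ is $\ep$-holomorphic, at every point $p$ lying in a depth-$k$ stratum $V_I$ the configuration $N_V(p)\subset T_pX$ is an $\al$-general, $\ep$-holomorphic splitting, so by Lemma~\ref{etatrans-ephololemma} there is a $J$-invariant splitting $N'_V(p)$ within $C_\al\ep$ of it. The first step is to make this choice coherent: working outward from the deepest strata, use the local models of \cite{i-nor} and the compatibility of the forgetful maps (\ref{A.Gkcompatible}) to extend, over a tubular neighborhood of each $V_I$, a smooth family of splittings agreeing with the ones already chosen on the closures $V_{I'}\supset V_I$. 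Because $\widehat G_k^J(TX)$ is a submanifold of $\widehat G_k(TX)$ and the nearest-point projection onto it is smooth on a $C_\al\ep$-tube, one gets a smooth section $\sigma_I$ into $\widehat G^J_k(TX)$ on a neighborhood of each $V_I$, still $C_\al\ep$-close to the original splitting.

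Next I would build $J_V$ itself. Over a neighborhood of $V$, use the local models to extend $\sigma_I$ (and hence the splitting $T X = V_{all}\oplus\bigoplus_i N_i$ and each block $V_i$) off $V_I$ by the "extend smoothly then project" trick already described before the statement; then define $J_V$ on this neighborhood to be the unique almost complex structure that preserves each block of the splitting, is tamed by $\omega$, and agrees to high order with $J$ in the directions tangent to the strata. The standard way is: on each block pick the "polar part" of $J$ — i.e. if $J'$ is a nearby block-preserving structure one sets $J_V = -(\text{something})$; concretely one can take, on a sufficiently small tube, $J_V$ to be the structure whose graph is obtained by averaging $J$ over the local $(\cx^*)^k$-action that fixes $V_I$ and rescales the $N_i$ (this is what forces the 1-jet condition of Definition~3.2 of \cite{IP1}), then projecting the result to be $\omega$-tame. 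Because $J$ is already $\ep$-holomorphic along $V$, this averaging moves $J$ by at most $C_\al\ep$ in $C^0$. Finally, patch $J_V$ (defined near $V$) to $J$ (on the rest of $X$) using a cutoff supported in the tube; since the two structures differ by $O(C_\al\ep)$ and $\omega$-tameness is convex/open, the convex interpolation $tJ_V+(1-t)J$ is still an almost complex structure tamed by $\omega$ for $\ep<\ep_0$.

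It then remains to verify (a) and (b). Statement (a) is immediate: near $V_I$ the structure $J_V$ preserves every block $N_i$ and $V_{all}$ by construction, hence preserves each $V_i$ and every intersection (\ref{split.W}), and on $V_I$ itself $J_V$ coincides with the projected structure so $(V,N_V)$ is genuinely $J_V$-holomorphic there; since the cutoff is $\equiv 1$ on a smaller tube, nothing is disturbed near $V$. Statement (b) is the content of the 1-jet condition in \cite{IP1},\cite{i-nor}: one must check that along each $V_I$ the structure $J_V$, restricted to the normal directions, varies in the prescribed way — equivalently that $J_V$ is invariant under the local fiberwise $\cx^*$-scalings to first order. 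By the "average over the scaling action" construction this holds exactly on $V_I$, and the outward extension was chosen precisely to propagate it, so (b) holds. The compatibility of the extensions across strata of different depths — i.e. that the 1-jet condition for the depth-$k$ stratum is consistent with the one already imposed on the bordering depth-$(k-1)$ strata — is exactly what (\ref{A.Gkcompatible}) guarantees.

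The main obstacle, I expect, is the coherent extension in the first two steps: making the block-preserving, $J$-invariant splitting and the associated $J_V$ well defined on overlapping tubular neighborhoods of strata of all depths while simultaneously respecting the normal-crossing local models, keeping all estimates uniform in the $C^0\le C_\al\ep$ bound, and not destroying the 1-jet normalization on lower-depth strata when one adjusts $J_V$ near a deeper stratum. This is a finite induction on depth, but bookkeeping the compatibility conditions (\ref{split.W}) and (\ref{A.Gkcompatible}) together with the taming condition is where the real work lies; the pointwise Grassmannian estimates from Lemma~\ref{etatrans-ephololemma} and the convexity of $\omega$-tameness make the rest routine.
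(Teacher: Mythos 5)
Your outline matches the paper's at the level of strategy (deform the splittings coherently, stratum by stratum, to $J$-invariant ones; define a block-preserving $J_V$ near $V$; patch into $J$ away from $V$; then address the 1-jet condition), but two steps fail as written. First, the patching: the convex interpolation $tJ_V+(1-t)J$ is \emph{not} an almost complex structure (it does not square to $-\,\mathrm{Id}$), so the cutoff argument collapses; the paper merges $J_V$ into $J$ by the method of Theorem~A.2 of \cite{IP2}, not by linear interpolation. Relatedly, your definition of $J_V$ near $V$ (``the unique almost complex structure that preserves each block, is tamed by $\omega$, and agrees to high order with $J$'') specifies no construction, and no such uniqueness holds; the paper's definition is explicit: letting $\pi_\phi$ denote the $g$-orthogonal projection from the given splitting $N_V$ onto the nearby $J$-invariant configuration $N_\phi$, one sets $J_V=\pi_\phi^*J=(\pi_\phi^{-1})_*\circ J\circ(\pi_\phi)_*$, which manifestly preserves $N_V$ (hence $V$) and satisfies $|J-J_V|\le C_\al\ep$.

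The more serious gap is part (b). The $V$-compatibility of \cite{IP1}, \cite{i-nor} is a condition on the 1-jet of $J_V$ along each stratum --- concretely, that the Nijenhuis tensor satisfies ${\cal N}(v,\xi)\in TV_i$ for all $v\in TV_I$ and $\xi$ in the normal bundle $N_I$ --- and nothing in your construction produces it. ``Averaging $J$ over the local $(\cx^*)^k$-action'' is not a defined operation (the group is noncompact, and an average of almost complex structures is not an almost complex structure), and even granting some regularized version you never verify that it yields the Nijenhuis condition, nor that the subsequent taming projection and cutoff preserve it. In the paper this is a separate, genuine step: starting from the deepest stratum and proceeding inductively, $J_V$ is modified near each $V_I$ by parallel transporting in the normal directions $N_I$ along $V_I$ and re-merging into the existing structure (again via the proof of Theorem~A.2 of \cite{IP2}); this forces the required behavior of the normal derivatives while keeping $(V,N_V)$ $J_V$-holomorphic and keeping $|J-J_V|\le C_\al\ep$. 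A minor point: the pointwise input you attribute to Lemma~\ref{etatrans-ephololemma} is really just the definition of an $\ep$-holomorphic splitting together with nearest-point projection onto $\widehat G^J_k(TX)$; that lemma produces an adapted splitting from a configuration and is used for Corollary~\ref{0-jetlemma.D}, not inside the proof of Proposition~\ref{0-jetlemma}.
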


\begin{proof}
Both $(X, V)$ and the Grassmanian manifolds are compact so we can find uniform bounds for all pointwise estimates below.  

For any $V$ which is $\ep$-holomorphic and $\al$-general, we can  deform each section $\sigma_I$ to a  section $\phi_I$  of the complex configurations space $\widehat G^J_{|I|}(TX)$  and, by  further deformations,  make the $\{\phi_I\}$ compatible under the inclusions $\widehat G_{|I'|}^J(TX) \hookrightarrow \widehat  G^J_{|I|}(TX)$ corresponding to (\ref{A.Gkcompatible}).  Because for $\ep$ small these deformations take place in a small tubular neighborhood of $\widehat  G^J_{|I|}(TX)$ in $\widehat  G_{|I|}(TX)$ (away from the singular locus $S$ of non transverse configurations),  the deformation is unique up to homotopy and $\mbox{dist}(\sigma_I, \phi_I)\le C_\al\ep$ for some uniform constant $C_\al$ (independent of $\ep$ and $V$), using the canonical metric on $\widehat  G_{|I|}(TX)$ induced by the metric $g$ on $X$.  

At each point $p$ in a neighborhood of $V$, the section $\phi$ corresponds to a $J$-complex configurations $N_\phi$ in $T_pX$ that is close to $N_V$ in the sense that the $g$-orthogonal projection 
\bear
\label{A.piphiproj}
\pi_\phi:  N_V\to N_\phi
\eear
satisfies $\|\pi_\phi-\mbox{Id.}\| \le C\ep$.  Here $\pi_\phi$ is defined by taking $TV_{I}$ and each $N_i$, for $i\in I$ onto the corresponding subspaces of $N_\phi$, therefore is well defined on  $T_pX= T_pV_I\oplus_{i\in I} N_i$.  In particular, $\pi_\phi$ is an isomorphism for small  $\ep$.   Define $J_V$ by 
$$
J_V \ =\ \pi_\phi^*J\ =\  (\pi_\phi^{-1} )_*\circ J \circ (\pi_\phi)_*    
$$
Then $J_V$ is an almost complex structure on a neighborhood of $V$ which preserves $N_V$,  and therefore preserves $V$,  and $|J-J_V|\le C\ep$ with $C_\al$ independent of $\ep$ and $V$.    

This defines $J_V$ in a neighborhood of $V$.  Statement (a) follows because we can use  the method of Theorem~A.2 of \cite{IP1} to
merge $J_V$ into $J$, maintaining the bound $|J-J_V|\le C_\al\ep$.

Finally, we  can also achieve  statement (b) by successively modifying  $J_V$, beginning with the deepest stratum $V_{all}$.   The  needed $V$-compatibility  condition along a stratum $V_I$ requires that the Nijenhuis tensor ${\cal N}$ of $J_V$ at $p\in V_I$  satisfy ${\cal N}(v, \xi)\in TV_i$ for    every $v\in TV_I$ and every $\xi$ in the normal bundle to $V_I$, which is  $N_I=\oplus_{i\in I} N_i$.  This can be achieved by applying the proof of Theorem~A.2 of \cite{IP2}, parallel transporting in  directions inside $N_I$ along $V_I$ and merging into the existing $J$.  This yields a new $J_V$ that is now $V$-compatible in a neighborhood of $V_I$ (and preserves the fact that $(V, N_V)$ is $J_V$-holomorphic everywhere). One then repeats the process along the lower strata inductively to construct the required $V$-compatible $J_V$. 
 \end{proof}
 
Proposition~\ref{0-jetlemma}   has the following corollary that was used repeatedly in Section \ref{section8}.

  \begin{cor} 
\label{0-jetlemma.D} Suppose that $V$ is a  $J$-holomorphic normal crossing divisor in 
$(X, \omega, J, g)$ and fix $\eta>0$. Then there exists constants $\ep, C_\eta>0$  with the following property: for each Donaldson divisor $D$ that is $\ep$-holomorphic for  $\ep<\ep_0$ and $\eta$-transverse to $V$ (in the sense of \cite{au2}),  $V\cup D$ is a symplectic normal crossing divisor for some $J'$ with $|J-J'|\le C_\eta\ep$.  
\end{cor}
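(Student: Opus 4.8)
The plan is to reduce Corollary~\ref{0-jetlemma.D} to Proposition~\ref{0-jetlemma} by checking that $V \cup D$, equipped with a suitable adapted splitting, is a topological normal crossing divisor that is $\al$-general and $O(\ep)$-holomorphic for a constant $\al$ depending only on $\eta$ (and on the fixed data $(X,V,J,g)$). Once that is established, Proposition~\ref{0-jetlemma} immediately produces an almost complex structure $J'=J_{V\cup D}$ with $|J-J'|\le C_\al\ep$ that makes $V\cup D$ both $J'$-holomorphic and $V\cup D$-compatible, which is exactly the conclusion.

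First I would record what "$\eta$-transverse to $V$" gives us in the sense of \cite{au2}: along each stratum $V_I$ of $V$, the divisor $D$ meets $V_I$ in a submanifold, and the intersection angle between $T D$ and $T V_I$ is bounded below by a function of $\eta$ at every point of $V_I \cap D$. Since $V$ itself is a genuine $J$-holomorphic normal crossing divisor, its branches are already in a fixed general position with a fixed $J$-invariant adapted splitting $N_V$; adjoining $D$ adds one more codimension~$2$ branch whose position relative to all the strata of $V$ is controlled by $\eta$. Consequently the enlarged configuration $V\cup D$, with an adapted splitting obtained by combining $N_V$ with a complementary $2$-plane to $D$ inside each relevant stratum (chosen as in the proof of Lemma~\ref{etatrans-ephololemma}), is $\al$-general for some $\al=\al(\eta)>0$, uniformly over $X$ by the compactness of $X$ and of the relevant Grassmann bundles.

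Next I would estimate how non-holomorphic $V\cup D$ is. The branches of $V$ are exactly $J$-holomorphic, so they contribute $0$; the branch $D$ is $\ep$-holomorphic by hypothesis, i.e. its tangent planes lie within $\ep$ of $Gr^J_{2}$ pointwise. Feeding this into the machinery of Lemma~\ref{etatrans-ephololemma}: because the combined configuration is $\al$-general, the adapted splitting $N_{V\cup D}$ can be taken $c_\al$-general and $C_\al\ep$-holomorphic, where $C_\al$ depends only on $\al$ (hence only on $\eta$) and not on $D$. Thus $(V\cup D, N_{V\cup D})$ is a topological normal crossing divisor that is $\al$-general and $C_\al\ep$-holomorphic, so for $\ep$ small enough (smaller than the threshold $\ep_0(\al)$ from Proposition~\ref{0-jetlemma}) that proposition applies and yields the desired $J'$ with $|J-J'|\le C_\eta\ep$, relabeling the constant as $C_\eta$.

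The main obstacle I anticipate is bookkeeping rather than a deep difficulty: one must verify that adjoining the single branch $D$ does not destroy the general-position and compatibility structure already present for $V$, and in particular that the adapted splitting can be chosen to \emph{extend} the fixed $J$-invariant splitting $N_V$ of $V$ while remaining $c_\al$-general --- this is where $\eta$-transversality is essential, since a tangency between $D$ and some stratum $V_I$ would push the configuration toward the singular locus $\widehat S$. A secondary point requiring care is that Proposition~\ref{0-jetlemma} is stated for a normal crossing divisor \emph{with} a prescribed adapted splitting, so I must make the choice of $N_{V\cup D}$ explicit and check it agrees with $N_V$ on the strata of $V$ and restricts correctly on the deeper strata $V_I \cap D$; this is the content of the compatibility under the forgetful maps \eqref{A.Gkcompatible}, applied now to the enlarged index set that includes the label of $D$.
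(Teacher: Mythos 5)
Your proposal is correct and follows essentially the same route as the paper's own proof: establish that $V\cup D$ is a topological normal crossing divisor that is $\al(\eta)$-general and $O(\ep)$-holomorphic, invoke Lemma~\ref{etatrans-ephololemma} to get a $c_\eta$-general, $C'_\eta\ep$-holomorphic adapted splitting, and then apply Proposition~\ref{0-jetlemma} for $\ep$ below a threshold depending on $\eta$. Your extra care about making the splitting extend $N_V$ is fine but not needed, since Proposition~\ref{0-jetlemma} only requires some adapted splitting for $V\cup D$.
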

\begin{proof} 
By compactness, $V$ is $\al$-general for some $\al>0$. Similarly, because $D$ is $\eta$-transverse to $V$,  $V\cup D$ is a topological normal crossing divisor that  is $\al$-general for some $\al=\al(\eta)>0$ independent of $D$. Lemma \ref{etatrans-ephololemma} applies pointwise to produce an adapted splitting $N_{D\cup V}$  for $V\cup D$ that   is $c_\eta$-general and $C_\eta'\ep$-holomorphic with   constants $c_\eta$ and $C'_\eta$  independent of $\ep$ and $D$.

Proposition \ref{0-jetlemma} then applies provided $\ep$ was sufficiently small (less than an $\ep_0$ depending on $\eta$), yielding  a $V\cup D$ compatible almost complex structure $J'=J_{V\cup D}$ with $|J-J'|\le C_\eta\ep$ for a constant   $C_\eta$ independent of $\ep$ and $D$.  \end{proof}

\vspace{1cm}

{\small

\medskip

}

\vspace{8mm}



}

\end{document}